\sloppy\pagestyle{plain}
\newtheorem{theorem}[equation]{Theorem}
\newtheorem{proposition}[equation]{Proposition}
\newtheorem{lemma}[equation]{Lemma}
\newtheorem{corollary}[equation]{Corollary}
\newtheorem{conjecture}[equation]{Conjecture}
\theoremstyle{definition}
\newtheorem{example}[equation]{Example}
\newtheorem{definition}[equation]{Definition}
\theoremstyle{remark}
\newtheorem{remark}[equation]{Remark}
\makeatletter\@addtoreset{equation}{section} \makeatother
\renewcommand{\thefootnote}{}
\begin{document}
%\udc{512.761.5}

\title{On Fano threefolds with canonical Gorenstein singularities}\footnote{The work was partially supported by
RFFI grant No. 08-01-00395-a and grant N.Sh.-1987.2008.1.}

\author{Ilya Karzhemanov}

\begin{abstract}
We classify three-dimensional Fano varieties with canonical Gorenstein singularities of degree bigger than $64$.
\end{abstract}

\sloppy

\maketitle

\section{Introduction}
\label{section:introduction}

\renewcommand{\thefootnote}{1}
In the present article we study three-dimensional Fano varieties\footnote{We assume that all varieties
are projective, normal and  defined over $\mathbb{C}$.} with canonical Gorenstein singularities. We consider
such varieties from the view point of the value of their anti-canonical degree.

It follows from the main result in \cite{Borisov-boundedness} that the anti-canonical degree of Fano threefolds with
canonical Gorenstein singularities is bounded. Under the stronger restrictions on singularities there is a precise estimate:

\begin{theorem}[\cite{VA-1}, \cite{VA-2}, \cite{Mori-Mukai}, \cite{Lvovsky}]
\label{theorem:smooth-case}
Let $X$ be a non-singular Fano threefold. Then $-K_{X}^{3} \leqslant 64$ with equality only for $\mathbb{P}^{3}$.
\end{theorem}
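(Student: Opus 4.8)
The plan is to reduce everything to the Fano index and then invoke the biregular classification referenced in the statement. Recall that for a smooth Fano threefold $X$ one writes $-K_{X} = r(X)\, H$, where $H \in \mathrm{Pic}(X)$ is a primitive ample class and $r(X) \geqslant 1$ is the \emph{Fano index}; then $-K_{X}^{3} = r(X)^{3}\, H^{3}$. Since $\mathbb{P}^{3}$ has $-K_{\mathbb{P}^{3}} = 4H$ with $H$ the hyperplane class and $H^{3} = 1$, it satisfies $-K_{\mathbb{P}^{3}}^{3} = 64$, so it suffices to prove that every smooth Fano threefold not isomorphic to $\mathbb{P}^{3}$ has $-K_{X}^{3} \leqslant 54$.

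First I would bound $r(X)$. By the Kobayashi--Ochiai theorem $r(X) \leqslant \dim X + 1 = 4$, with $r(X) = 4$ only for $X \cong \mathbb{P}^{3}$ and $r(X) = 3$ only for $X$ a smooth quadric $Q^{3} \subset \mathbb{P}^{4}$. In the latter case $H^{3} = \deg Q^{3} = 2$, whence $-K_{X}^{3} = 27 \cdot 2 = 54$. This disposes of the two top values of the index and already locates the equality case, so the remaining work is to show $-K_{X}^{3} < 64$ whenever $r(X) \leqslant 2$.

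For $r(X) = 2$ the variety is a del Pezzo threefold and $-K_{X}^{3} = 8 H^{3}$; here I would appeal to the classification to see that the degree $H^{3}$ never reaches $8$ (the would-be degree-$8$ member is $\mathbb{P}^{3}$ carrying the imprimitive polarization $\mathcal{O}(2)$, already excluded since its genuine index is $4$), so $H^{3} \leqslant 7$ and $-K_{X}^{3} \leqslant 56$. For $r(X) = 1$ one has $-K_{X}^{3} = H^{3} = 2 g(X) - 2$, and the bound $g(X) \leqslant 12$ furnished by the classification gives $-K_{X}^{3} \leqslant 22$. Collecting the four cases yields $-K_{X}^{3} \leqslant 64$ with equality exactly for $\mathbb{P}^{3}$.

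The genuinely hard input is of course the sharp degree bound in the low-index cases, namely that $H^{3} \leqslant 7$ for index-$2$ threefolds and $g \leqslant 12$ for index-$1$ threefolds; this is precisely what the full biregular classification of \cite{VA-1}, \cite{VA-2} and \cite{Mori-Mukai} supplies, the boundedness of $-K_{X}^{3}$ itself being elementary once $r(X)$ is controlled. I expect the main obstacle to be obtaining these estimates without simply quoting the classification tables: a self-contained argument would require, for the prime ($\rho = 1$) cases, an analysis of the anticanonical linear system $|-K_{X}|$ and of the structure of a general anticanonical divisor (a $K3$ surface), together with Mori-theoretic control of the extremal contractions to treat the case $\rho(X) \geqslant 2$.
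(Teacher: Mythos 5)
The paper gives no proof of this theorem: it is quoted verbatim from the classification literature (\cite{VA-1}, \cite{VA-2}, \cite{Mori-Mukai}, \cite{Lvovsky}), so there is nothing internal to compare your argument against; I can only assess the reduction you propose on its own terms. The index-$4$ and index-$3$ cases (Kobayashi--Ochiai) and the index-$2$ case ($H^{3}\leqslant 7$ for a genuine del Pezzo threefold, hence $-K_{X}^{3}\leqslant 56$) are handled correctly, apart from the slip where you announce that it suffices to prove $-K_{X}^{3}\leqslant 54$ and then obtain $56$ two sentences later.

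The genuine gap is in the index-$1$ case. You write $-K_{X}^{3}=H^{3}=2g-2$ and invoke the bound $g\leqslant 12$, but that bound is Iskovskikh's theorem for \emph{prime} Fano threefolds, i.e.\ it requires $\rho(X)=1$ in addition to $r(X)=1$. For $r(X)=1$ and $\rho(X)\geqslant 2$ the degree is not bounded by $22$: for instance $\mathbb{P}^{1}\times\mathbb{P}^{2}$ has index $1$ and $-K^{3}=54$, and $\mathbb{P}\bigl(\mathcal{O}_{\mathbb{P}^{2}}\oplus\mathcal{O}_{\mathbb{P}^{2}}(2)\bigr)$ (no.~36 in the Mori--Mukai $\rho=2$ table) has index $1$ and $-K^{3}=62$. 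So your case analysis, taken at face value, would ``prove'' that every smooth Fano threefold other than $\mathbb{P}^{3}$ has $-K_{X}^{3}\leqslant 56$, which is still true but only barely, and the argument you give does not establish it: the entire content of \cite{Mori-Mukai} is needed precisely to control the case $r(X)=1$, $\rho(X)\geqslant 2$, where the maximum $62$ is attained. To repair the proof you must either split the index-$1$ case by Picard number (Iskovskikh for $\rho=1$, Mori--Mukai for $\rho\geqslant 2$, checking that the largest degree in their tables is $62$), or replace the genus bound by an argument that works for all index-$1$ Fanos; as written, the step is simply false.
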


For Fano threefolds with terminal Gorenstein singularities we have the following result:

\begin{theorem}[\cite{Namikawa-smoothing}]
\label{theorem:Namikawa-smoothing}
Let $X$ be a Fano threefold with terminal Gorenstein singularities. Then $X$ can be deformed to a non-singular Fano threefold.
In particular, $-K_{X}^{3} \leqslant 64$.
\end{theorem}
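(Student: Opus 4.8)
The plan is to prove the smoothing statement, from which the bound $-K_X^3 \leqslant 64$ follows formally. Indeed, in a flat family $\mathcal{X}\to \Delta$ over a disc with central fibre $X$, ampleness of $-K$ is an open condition and the intersection number $(-K)^3$ is locally constant, so a smoothing $X_t$ is itself a Fano threefold with $-K_{X_t}^3 = -K_X^3$; applying Theorem~\ref{theorem:smooth-case} to the nonsingular $X_t$ then yields the inequality. By Reid's classification, the terminal Gorenstein threefold singularities of $X$ are precisely the isolated compound Du Val points; in particular they are isolated hypersurface singularities, hence locally smoothable, since a generic perturbation of the defining equation in $\mathbb{C}^4$ is nonsingular away from a proper discriminant.

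Next I would set up the deformation theory of $X$ through the groups $T^i := \operatorname{Ext}^i(\Omega_X^1,\mathcal{O}_X)$, with $T^1$ the space of first-order deformations and $T^2$ the obstruction space, together with the local sheaves $\mathcal{T}^i_X := \mathcal{E}xt^i(\Omega_X^1,\mathcal{O}_X)$. Because the singularities are isolated hypersurface points, $\mathcal{T}^i_X = 0$ for $i \geqslant 2$ while $\mathcal{T}^1_X$ is a finite-length sheaf supported on $\mathrm{Sing}(X)$; thus $H^p(X,\mathcal{T}^1_X)=0$ for $p\geqslant 1$. Feeding this into the local-to-global spectral sequence $E_2^{p,q}=H^p(X,\mathcal{T}^q_X)\Rightarrow T^{p+q}$, one reads off that both the vanishing of the obstruction space $T^2$ and the surjectivity of the restriction map $T^1 \to H^0(X,\mathcal{T}^1_X)=\bigoplus_{p\in\mathrm{Sing}(X)} T^1_{X,p}$ follow from the single cohomological statement $H^2(X,\Theta_X)=0$, where $\Theta_X = \mathcal{H}om(\Omega_X^1,\mathcal{O}_X)$ is the tangent sheaf.

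The heart of the argument, and the step I expect to be hardest, is this vanishing $H^2(X,\Theta_X)=0$. Since $X$ is Gorenstein with isolated singularities, $\Theta_X$ is reflexive and agrees away from finitely many points with $\Omega^{2}_X\otimes\omega_X^{-1}$; identifying $\Theta_X$ with the reflexive sheaf $\Omega^{[2]}_X\otimes\omega_X^{-1}$ and recalling that $\omega_X^{-1}=-K_X$ is ample, the desired vanishing is exactly an Akizuki--Nakano-type statement $H^{q}\bigl(\Omega^{[p]}_X\otimes(\text{ample})\bigr)=0$ in the range $p+q=4>3=\dim X$. On a nonsingular Fano threefold this is the classical Akizuki--Nakano theorem, so the real work is to extend it across the canonical Gorenstein singularities. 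I would do this by passing to a resolution $\pi\colon Y\to X$, comparing $\Omega^{[2]}_X$ with $\pi_*\Omega^2_Y$ (and its logarithmic variant along the exceptional locus), and combining Kawamata--Viehweg vanishing on $Y$ with the rationality of the singularities of $X$ to kill the higher direct images via the Leray spectral sequence; controlling the exceptional contributions and the comparison of reflexive forms upstairs and downstairs is the delicate point.

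Granting the vanishing, the proof concludes quickly. From $T^2=0$ the deformation functor of $X$ is unobstructed, so there is a semiuniversal deformation $\mathcal{X}\to S$ over a nonsingular base with $T_0 S \cong T^1$. The surjectivity $T^1 \twoheadrightarrow \bigoplus_p T^1_{X,p}$ means this family induces, at all singular points simultaneously, a deformation that is versal to first order; since for each isolated compound Du Val point the smoothing directions fill a dense open subset of its local deformation space, a general point $s\in S$ yields a fibre $X_s$ in which every singular point is smoothed. As $X_s$ has no other singularities, it is a nonsingular Fano threefold deforming $X$, which is what was claimed.
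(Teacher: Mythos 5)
The first thing to say is that the paper contains no proof of this statement: Theorem~\ref{theorem:Namikawa-smoothing} is imported verbatim from \cite{Namikawa-smoothing}, so the comparison below is with Namikawa's published argument. Your deformation-theoretic skeleton is correct and is exactly his: terminal Gorenstein threefold points are isolated cDV, hence isolated hypersurface singularities, so $\mathcal{E}xt^{i}(\Omega^{1}_{X},\mathcal{O}_{X})=0$ for $i\geqslant 2$ while $\mathcal{E}xt^{1}$ has finite support; the local-to-global spectral sequence then reduces both $T^{2}=0$ and the surjectivity of $T^{1}\to\bigoplus_{p}T^{1}_{X,p}$ to the single vanishing $H^{2}(X,\Theta_{X})=0$; unobstructedness plus the submersion onto the product of local deformation spaces plus density of the local smoothing loci of isolated hypersurface points yields the smoothing; and the passage from the smoothing to $-K_{X}^{3}\leqslant 64$ via openness of ampleness, constancy of $(-K)^{3}$ in a flat family and Theorem~\ref{theorem:smooth-case} is fine.

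The genuine gap is in the one step you flag as delicate, and it carries essentially all the content of the theorem. The identification $\Theta_{X}\simeq\Omega^{[2]}_{X}\otimes\omega_{X}^{-1}$ is correct, but the route you sketch for $H^{2}(X,\Theta_{X})=0$ would not go through as stated, for two reasons. First, on a resolution $\pi\colon Y\to X$ the relevant twist is $\pi^{*}\omega_{X}^{-1}$, which is only nef and big, and the Akizuki--Nakano vanishing for $H^{q}(\Omega^{p}\otimes L)$ with $p<\dim$ is not available for nef and big $L$: Kawamata--Viehweg covers only the case $p=\dim$, and Ramanujam-type examples show the Nakano extension to big and nef bundles fails in general. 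Second, rationality of the singularities kills $R^{i}\pi_{*}\mathcal{O}_{Y}$ (and, via Grauert--Riemenschneider, $R^{i}\pi_{*}\omega_{Y}$) but says nothing about $R^{i}\pi_{*}\Omega^{2}_{Y}$, which is what your Leray argument would need; already $R^{1}\pi_{*}\Omega^{1}_{Y}$ is nonzero for a divisorial resolution. Namikawa's actual proof establishes the Serre-dual statement $H^{1}(X,\Omega^{1}_{X}\otimes\omega_{X})=0$ using Steenbrink-style vanishing theorems for differential forms on varieties with isolated singularities (logarithmic forms on a resolution and Du Bois-type comparison sheaves), together with a comparison of $\Omega^{1}_{X}$ with the relevant sheaf of ``good'' forms that exploits the fact that cDV points are isolated rational hypersurface singularities. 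So: right reduction, right identification of the key lemma, but the key lemma itself is unproved and the particular method proposed for it needs to be replaced.
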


Much more general result in the direction of getting a sharp bound for the degree of Fano threefolds with canonical
Gorenstein singularities was obtained in \cite{Prokhorov-degree}:

\begin{theorem}
\label{theorem:Prokhorov-degree}
Let $X$ be a Fano threefold with canonical Gorenstein singularities. Then  $-K_{X}^{3} \leqslant 72$ with equality only for
$\mathbb{P}(3,1,1,1)$ and $\mathbb{P}(6,4,1,1)$.
\end{theorem}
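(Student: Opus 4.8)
The plan is to bound $d := -K_X^3$ by studying the complete anticanonical linear system $|-K_X|$, which is the system of an ample Cartier divisor since $X$ is Gorenstein. First I would record the numerics. Because $X$ has canonical singularities and $-K_X$ is ample, Kawamata--Viehweg vanishing gives $h^i(\mathcal{O}_X(-K_X)) = 0$ for $i>0$, so Riemann--Roch yields $h^0(-K_X) = \tfrac12 d + 3$. Thus the anticanonical map $\Phi = \varphi_{|-K_X|}$ lands in $\mathbb{P}^N$ with $N = \tfrac12 d + 2$, and I set $g = \tfrac12 d + 1$ (the genus), so that a general curve section $C = S_1 \cap S_2$ of two anticanonical members satisfies $\omega_C = (-K_X)|_C$ and $p_a(C) = g$ by adjunction.

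Second, I would run the degree estimate. Writing $-K_X \sim M + F$ with $M$ the mobile part and $F \geq 0$ fixed, one has $d = (-K_X)^3 \geq \deg\Phi \cdot \deg\Phi(X)$. If $\dim\Phi(X) = 3$, then $\Phi(X)$ is a nondegenerate threefold in $\mathbb{P}^N$, whence $\deg\Phi(X) \geq \operatorname{codim} + 1 = N-2 = \tfrac12 d$; combining, $\deg\Phi \cdot \tfrac12 d \leq d$, forcing $\deg\Phi \leq 2$. Hence in the three-dimensional case $\Phi$ is either birational or generically $2\!:\!1$. The degenerate case $\dim\Phi(X) \leq 2$, where $|-K_X|$ is composed with a pencil or fibers $X$ over a surface, I would dispatch separately, showing it produces cone-like structures whose degree is controlled directly.

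Third, the two subcases of $\dim\Phi(X) = 3$. When $\deg\Phi = 2$, the chain of inequalities collapses to $\deg\Phi(X) = \tfrac12 d$, i.e. $\Phi(X)$ is a variety of minimal degree (a scroll, a (cone over a) Veronese surface, or a quadric); realizing $X$ as the induced double cover and computing its anticanonical degree then bounds $d$ and isolates the extremal double cover. The heart of the argument is the birational case $\deg\Phi = 1$. Here I would pass to a general member $S \in |-K_X|$, which --- this being the key technical input --- is a K3 surface with at worst Du Val singularities, polarized by $H = (-K_X)|_S$ with $H^2 = 2g-2$ and $h^0(S,H) = g+1$; Saint-Donat's classification of the projective models of $|H|$ on a K3 surface, propagated back to $\bar X = \Phi(X)$ via the section sequence, should then constrain $\bar X$ so tightly that $d \leq 72$, with the two boundary models forcing $X \cong \mathbb{P}(3,1,1,1)$ or $X \cong \mathbb{P}(6,4,1,1)$.

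The main obstacle I anticipate is precisely this last step: controlling the general anticanonical member (the ``general elephant'' problem for Gorenstein canonical Fano threefolds, where a priori this member could acquire worse than Du Val singularities) and then carrying out the delicate projective-geometric classification of the birationally embedded image at the boundary $d = 72$, since the two extremal weighted projective spaces live exactly where the anticanonical section degenerates and the elementary degree count no longer bites.
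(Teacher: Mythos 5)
Your step two is sound as far as it goes, but notice that the inequality $\deg\Phi\leqslant 2$ holds for \emph{every} value of $d=-K_X^3$, so it yields no bound whatsoever; the degenerate cases it isolates (base points in $|-K_X|$, $\Phi$ a double cover, image not an intersection of quadrics) are precisely the ones already known to force $d\leqslant 22$, $40$, $54$ respectively (Propositions~\ref{theorem:non-free-antican-system}, \ref{theorem:free-antican-system-1} and \ref{theorem:free-antican-system-2} above). The entire content of the theorem lies in the remaining case --- $\Phi$ birational onto an intersection of quadrics --- and there your proposal contains no argument. Saint-Donat's classification of the projective models of $|H|$ on a K3 surface describes \emph{how} $|H|$ maps the surface for each genus, but it places no upper bound on $g$: polarized K3 surfaces with at worst Du Val singularities exist in every even degree $2g-2$, so the existence of a Du Val elephant $S\in|-K_X|$ (Theorem~\ref{theorem:elefant}) cannot by itself bound $-K_X^3$. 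The projective cone over a K3 surface of large genus shows where the real obstruction sits: it carries a K3 hyperplane section of arbitrarily large degree but fails to have canonical singularities (the vertex has discrepancy $-1$), so the bound $72$ must be extracted from the canonicity of the \emph{threefold} singularities, which your sketch never uses beyond the elephant statement.

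The actual proof of this theorem in \cite{Prokhorov-degree} runs along entirely different lines --- the same lines the present paper follows for the refinement $64<-K_X^3\leqslant 70$. One replaces $X$ by a terminal $\mathbb{Q}$-factorial modification $Y$ with $K_Y=\varphi^*K_X$ (if $Y=X$ one is done by Namikawa's smoothing theorem), classifies the $K$-negative extremal contractions of $Y$ via Cutkosky's list, shows that contractions onto weak Fano threefolds only increase the degree so that one may induct, and that otherwise $X$ is singular along a line or contains a plane; one then builds a linear subsystem $\mathcal{H}\subset|-K_X|$ adapted to that line or plane, runs a $(K+\mathcal{H})$-MMP to land on a Mori fibre space $W\to V$, and bounds $\dim|-K_W|$ by explicit computations on $\mathbb{P}^2$-bundles over a curve and on rank-$2$ vector bundles over a rational surface, with the two weighted projective spaces emerging as the unique extremal configurations. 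None of this machinery appears in your outline, and I do not see how to complete your third step without reproducing it.
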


It was conjectured in the paper \cite{Prokhorov-degree} that for the Fano threefold $X$ with only
$\mathrm{cDV}$ singularities the estimate $-K_{X}^{3} \leqslant 64$ holds. In the present article we will prove the
following

\begin{theorem}
\label{theorem:main}
Let $X$ be a Fano threefold with canonical Gorenstein singularities such that $64 < -K_{X}^{3} \leqslant 70$. Then $X$ is one of the following:
\begin{itemize}
\item The image under birational projection of the anti-canonically embedded Fano threefold $\mathbb{P}(6,4,1,1) \subset \mathbb{P}^{38}$
from a singular point on $\mathbb{P}(6,4,1,1)$ of type $\mathrm{cDV}$. In this case $-K_{X}^{3} = 70$ and $X$ has non-$\mathrm{cDV}$ singularities;

\item The anti-canonical image of the rational scroll
$\mathrm{Proj}\left(\mathcal{O}_{\mathbb{P}^{1}}(5)\oplus\mathcal{O}_{\mathbb{P}^{1}}(2)\oplus\mathcal{O}_{\mathbb{P}^{1}}\right)$. In this case
$-K_{X}^{3} = 66$ and $X$ has non-$\mathrm{cDV}$ singularities.
\end{itemize}
\end{theorem}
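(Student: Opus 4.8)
The plan is to work throughout with the anti-canonical linear system $|-K_{X}|$ and its general member, and to show that the extreme value of the degree forces $X$ to arise from one of the two maximal-degree varieties of Theorem~\ref{theorem:Prokhorov-degree}. First I would record the numerical constraints. Since $X$ is Gorenstein with canonical (hence rational) singularities, Riemann--Roch together with Kawamata--Viehweg vanishing give $h^{0}(X,-K_{X}) = -\frac{1}{2}K_{X}^{3}+3$, so the anti-canonical map sends $X$ into $\mathbb{P}^{-\frac{1}{2}K_{X}^{3}+2}$. A general element $S \in |-K_{X}|$ is a K3 surface with at worst Du Val singularities, on which $H := -K_{X}|_{S}$ is an ample polarization with $H^{2} = -K_{X}^{3}$. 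Because the intersection form on a K3 lattice is even, $H^{2}$ is even, so $-K_{X}^{3}$ is even; combined with the hypothesis this leaves only $-K_{X}^{3} \in \{66,68,70\}$, with sectional genus $g = \frac{1}{2}(-K_{X}^{3})+1 \in \{34,35,36\}$.

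Next I would analyse $\varphi_{|-K_{X}|}$, distinguishing according to whether it is birational onto its image and whether the restricted system $|H|$ on the general K3 section is hyperelliptic, following Saint--Donat's description of linear systems on K3 surfaces. The guiding principle is that a degree in the narrow window $(64,72)$ cannot be realised by a ``generic'' projectively normal anti-canonical embedding: it must come from a degeneration or a birational modification of one of the two varieties $\mathbb{P}(3,1,1,1)$ and $\mathbb{P}(6,4,1,1)$. Concretely, a birational projection of an anti-canonically embedded Gorenstein Fano threefold from a point of multiplicity $m$ drops the degree by exactly $m$, so to reach $70$ from $72$ one must project from a point of multiplicity $2$, i.e.\ a $\mathrm{cDV}$ point.

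I would then treat the two surviving degrees. For $-K_{X}^{3}=70$ I would show that among the extremal varieties only $\mathbb{P}(6,4,1,1)$ carries a centre whose birational projection is again a Fano threefold with canonical Gorenstein singularities, and that this projection necessarily introduces a non-$\mathrm{cDV}$ point. The scroll case is handled separately by detecting on $X$ a structure of fibration over $\mathbb{P}^{1}$ with general fibre $\mathbb{P}^{2}$; matching the data $h^{0}(-K_{X}) = 36$ and $-K_{X}^{3}=66$ with the splitting type of the bundle forces $\mathcal{O}_{\mathbb{P}^{1}}(5)\oplus\mathcal{O}_{\mathbb{P}^{1}}(2)\oplus\mathcal{O}_{\mathbb{P}^{1}}$, whose anti-canonical image realises exactly this degree while the contraction of the negative section produces a non-$\mathrm{cDV}$ singularity.

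Finally I would eliminate the value $-K_{X}^{3}=68$: it cannot be obtained by projecting $\mathbb{P}(3,1,1,1)$ or $\mathbb{P}(6,4,1,1)$ from a centre of the required multiplicity $4$ while preserving the Gorenstein canonical property, nor by any fibration structure compatible with $h^{0}(-K_{X})=37$. The main obstacle is precisely this exhaustiveness: one must prove that the two mechanisms above are the only sources of Fano threefolds with canonical Gorenstein singularities in the range $64 < -K_{X}^{3} \leqslant 70$, which requires complete control of the geometry of the anti-canonical image through its general K3 elephant together with careful bookkeeping of how the Gorenstein and canonical conditions propagate under projection and under contraction of the negative locus. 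Establishing that the extremal examples of Theorem~\ref{theorem:Prokhorov-degree} genuinely dominate this whole range, and ruling out all sporadic configurations, is where the bulk of the work lies.
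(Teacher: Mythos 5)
Your proposal correctly identifies the two target varieties, the parity of $-K_{X}^{3}$, and the numerology $g\in\{34,35,36\}$, but it does not contain a proof of the theorem: the entire content of the statement is the \emph{exhaustiveness} claim, and your text explicitly defers it (``is where the bulk of the work lies''). The ``guiding principle'' that any degree in the window $(64,72)$ must arise by projecting $\mathbb{P}(3,1,1,1)$ or $\mathbb{P}(6,4,1,1)$, or from a $\mathbb{P}^{2}$-fibration, is asserted rather than derived; in particular the observation that a projection from a point of multiplicity $m$ drops the degree by $m$ only tells you what a projection would yield, not that every $X$ with $-K_{X}^{3}=70$ is such a projection. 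Likewise, for $-K_{X}^{3}=66$ you posit ``detecting on $X$ a structure of fibration over $\mathbb{P}^{1}$ with general fibre $\mathbb{P}^{2}$'' without any mechanism for producing that fibration, and the elimination of $68$ is reduced to the same unproved dichotomy. No appeal to Saint--Donat's theory of linear systems on K3 surfaces is going to substitute for this: the paper needs (and uses) Propositions~\ref{theorem:non-free-antican-system}--\ref{theorem:free-antican-system-2} only to know that $|-K_X|$ embeds $X$ as an intersection of quadrics, which is an input to the argument, not the argument itself.

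The machinery that actually delivers exhaustiveness in the paper is entirely absent from your sketch. One passes to a terminal $\mathbb{Q}$-factorial modification $Y\to X$ and classifies $K$-negative extremal contractions $f:Y\to Y'$ using Cutkosky's results (Lemmas~\ref{theorem:0-contraction} and \ref{theorem:1-contraction}): when $Y'$ is again a weak Fano one climbs to degree $72$ and recovers $X_{70}$ as the projection of $\mathbb{P}(6,4,1,1)$ from a $\mathrm{cA_1}$ point (Proposition~\ref{theorem:contraction-to-curve}); when no such contraction exists, Corollaries~\ref{theorem:0-cor-contraction} and \ref{theorem:1-cor-contraction} force $X$ to be singular along a line or to contain a plane. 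That dichotomy feeds the construction of a log Mori fibre space $W\to V$ in Section~\ref{section:reduction}, and the remaining work is the case analysis over $\dim V$: the del Pezzo fibration case produces $X_{66}$ from the scroll and excludes the other splitting types by dimension counts on $|-K_W|$ (Section~\ref{section:curve}), and the conic bundle case is killed by Chern-class estimates for the rank-two bundle $\mathcal{E}$ on $\mathbb{P}^2$ and $\mathbb{F}_n$ (Section~\ref{section:surface}). Without some replacement for this chain of reductions, your outline cannot rule out ``sporadic configurations,'' so the proposal has a genuine gap at its central step.
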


\begin{remark}
Threefold $X$ with $-K_{X}^{3} = 70$ was found by I. Cheltsov.
\end{remark}

It follows easily from the Riemann--Roch formula on the Fano threefold $X$ that $-K_{X}^{3}$ is an even number. Thus,
Theorems~\ref{theorem:Prokhorov-degree}
and \ref{theorem:main} exaust all Fano threefolds with canonical Gorenstein singularities of degree bigger than $64$.
Moreover, since threefolds $\mathbb{P}(3,1,1,1)$ and $\mathbb{P}(6,4,1,1)$ have singularities worse than $\mathrm{cDV}$,
from Theorems~\ref{theorem:Prokhorov-degree} and \ref{theorem:main} we deduce

\begin{corollary}
\label{theorem:estimate-for-cDV-Fano}
Let $X$ be a Fano threefold with only $\mathrm{cDV}$ singularities. Then $-K_{X}^{3} \leqslant 64$.
\end{corollary}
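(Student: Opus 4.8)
The plan is to combine Theorems~\ref{theorem:Prokhorov-degree} and \ref{theorem:main} with the parity of the anti-canonical degree; no new geometry is needed, since the two theorems have already done the hard classification work. First I would record the observation made in the paragraph preceding the corollary: on any Fano threefold $X$ with canonical Gorenstein singularities the integer $-K_X^3$ is \emph{even}. This follows from the Riemann--Roch formula applied to $-K_X$, which on such an $X$ yields $h^0(X, -K_X) = \tfrac{1}{2}(-K_X^3) + 3$; since the left-hand side is an integer, $-K_X^3$ must be even. Consequently, assuming toward a contradiction that a Fano threefold $X$ with only $\mathrm{cDV}$ singularities satisfies $-K_X^3 > 64$, we are reduced to the finitely many cases $-K_X^3 \in \{66, 68, 70, 72\}$.

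Next I would dispose of these values one at a time. If $-K_X^3 = 72$, then Theorem~\ref{theorem:Prokhorov-degree} forces $X \cong \mathbb{P}(3,1,1,1)$ or $X \cong \mathbb{P}(6,4,1,1)$; but both of these weighted projective spaces carry singularities strictly worse than $\mathrm{cDV}$, contradicting the hypothesis on $X$. If instead $64 < -K_X^3 \leqslant 70$, then Theorem~\ref{theorem:main} asserts that $X$ is one of exactly two explicit threefolds, of degrees $70$ and $66$ respectively, and in each case the theorem states that $X$ has non-$\mathrm{cDV}$ singularities --- again a contradiction. In particular the value $-K_X^3 = 68$ is excluded outright, since the enumeration in Theorem~\ref{theorem:main} produces no threefold of that degree. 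Thus every even integer in the interval $(64, 72]$ is realised only by varieties whose singularities are worse than $\mathrm{cDV}$.

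Since in every case the assumption $-K_X^3 > 64$ forces singularities worse than $\mathrm{cDV}$, a threefold with only $\mathrm{cDV}$ singularities must satisfy $-K_X^3 \leqslant 64$, which is the desired conclusion. The only genuine point requiring attention --- the ``main obstacle,'' such as it is --- is verifying that each of the four exceptional varieties truly fails to be $\mathrm{cDV}$. However, this is precisely the content already asserted in the statements of the two theorems being invoked, so the corollary reduces to careful bookkeeping over the even integers of $(64, 72]$ rather than to any further classification effort.
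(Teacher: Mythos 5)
Your argument is correct and is essentially the paper's own proof: the paper likewise uses the evenness of $-K_X^3$ (from Riemann--Roch) to reduce to the degrees $66$, $68$, $70$, $72$, and then observes that the varieties listed in Theorems~\ref{theorem:Prokhorov-degree} and \ref{theorem:main} all have singularities worse than $\mathrm{cDV}$. No difference in approach worth noting.
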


Corollary~\ref{theorem:estimate-for-cDV-Fano} appears to be a justification for the following conjecture in
\cite{Prokhorov-degree}:

\begin{conjecture}
\label{theorem:conjecture-about-smoothing}
Let $X$ be a Fano threefold with only $\mathrm{cDV}$ singularities. Then $X$ has a smoothing by a flat deformation.
\end{conjecture}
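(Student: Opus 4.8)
\section*{A proof proposal}

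The plan is to attack the conjecture through local-to-global deformation theory, extending the mechanism by which Theorem~\ref{theorem:Namikawa-smoothing} settles the terminal (equivalently, isolated $\mathrm{cDV}$) case to arbitrary $\mathrm{cDV}$ singularities, the genuinely new content being those that are non-isolated, i.e. curves of $\mathrm{cDV}$ points. The first and most elementary observation is that every $\mathrm{cDV}$ point is a hypersurface singularity: by definition a general hyperplane section through the point is a Du Val surface singularity, so analytically $X$ is cut out near the point by a single equation $f = 0$ in a smooth fourfold. Two consequences follow at once. First, the family $\{f = t\}$ is a local smoothing, since for general $t$ the fibre is smooth by generic smoothness and the projection to the $t$-line is flat; thus $X$ is \emph{locally smoothable} at each singular point, in a well-defined smoothing direction $\sigma_{p} \in T^{1}_{X,p}$. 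Second, for a hypersurface (indeed any local complete intersection) singularity the second cotangent sheaf vanishes, so $\mathcal{T}^{2}_{X} = 0$ and there are no local obstructions.

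The core of the argument is to globalise the local smoothings. Write $\mathcal{T}^{i}_{X} = \mathcal{E}xt^{i}(\Omega_{X}, \mathcal{O}_{X})$ for the cotangent sheaves, so that $\mathcal{T}^{0}_{X} = T_{X}$ is the tangent sheaf and $\mathcal{T}^{1}_{X}$ is supported on $\mathrm{Sing}(X)$. The local smoothing directions assemble into a section $\sigma \in H^{0}(X, \mathcal{T}^{1}_{X})$ that is nonzero in the smoothing direction at every singular point. Through the local-to-global spectral sequence $E_{2}^{p,q} = H^{p}(X, \mathcal{T}^{q}_{X}) \Rightarrow \mathbb{T}^{p+q}_{X}$ relating these sheaves to the global cotangent cohomology $\mathbb{T}^{\bullet}_{X}$, the obstruction to promoting $\sigma$ to a first-order \emph{global} deformation is its image under the edge homomorphism $H^{0}(X, \mathcal{T}^{1}_{X}) \to H^{2}(X, T_{X})$. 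I would therefore aim to prove
\begin{equation}
\label{eq:tangent-vanishing}
H^{2}(X, T_{X}) = 0,
\end{equation}
which forces $\sigma$ to lift. Since $\mathcal{T}^{2}_{X} = 0$ and $\mathrm{Sing}(X)$ has dimension at most one, the higher obstructions to extending the first-order deformation to a formal one, and then by algebraisation (or directly in the analytic category) to an actual one-parameter family, all live in $\mathbb{T}^{2}_{X}$, which the spectral sequence builds out of $H^{2}(X, T_{X})$ and $H^{1}(X, \mathcal{T}^{1}_{X})$.

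On a \emph{smooth} Fano threefold the vanishing \eqref{eq:tangent-vanishing} is immediate from Akizuki--Nakano vanishing: Serre duality gives $H^{2}(X, T_{X})^{\vee} \cong H^{1}(X, \Omega^{1}_{X} \otimes \omega_{X})$, and since $-K_{X}$ is ample and $1 + 1 < 3$ this group vanishes. The heart of the matter, and the step I expect to be the main obstacle, is to establish the analogue on the \emph{singular} variety $X$, where $T_{X}$ is only reflexive, $\Omega^{1}_{X}$ is no longer locally free, and the classical Akizuki--Nakano statement can fail. Here I would exploit that $\mathrm{cDV}$ singularities are rational and Gorenstein, hence Du Bois, and pass to a resolution $\pi \colon \widetilde{X} \to X$ to compare $H^{2}(X, T_{X})$ with cohomology upstairs via the Leray spectral sequence, using Grauert--Riemenschneider and Steenbrink-type vanishing for the higher direct images of sheaves of differentials. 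Controlling $H^{1}(X, \mathcal{T}^{1}_{X})$ when $\mathrm{Sing}(X)$ is a genuine curve of $\mathrm{cDV}$ points is the second delicate point: now $\mathcal{T}^{1}_{X}$ is a sheaf on a curve whose $H^{1}$ need not vanish, so one must check that the obstruction classes actually produced by the lifted smoothing section vanish and that the resulting total space acquires no new singularities as the curve of $\mathrm{cDV}$ points is deformed away.

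Finally, once a flat family $\mathcal{X} \to \Delta$ over a disc with $\mathcal{X}_{0} \cong X$ and smooth general fibre has been produced, one checks that the general fibre is again a Fano threefold. Indeed Gorensteinness and ampleness are open conditions, so $-K_{\mathcal{X}_{t}}$ stays ample near $t = 0$, while $(-K_{\mathcal{X}_{t}})^{3}$ is constant in the flat family; hence the general fibre is a smooth Fano threefold with $-K^{3} = -K_{X}^{3}$, in agreement with Theorem~\ref{theorem:smooth-case} and Corollary~\ref{theorem:estimate-for-cDV-Fano}. The plan thus reduces the conjecture to the two cohomological vanishing statements above; that \eqref{eq:tangent-vanishing} has no unconditional proof on the singular $X$ with a one-dimensional $\mathrm{cDV}$ locus is, to my mind, precisely the reason the assertion remains a conjecture.
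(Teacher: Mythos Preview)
The statement you are attempting to prove is labeled a \emph{Conjecture} in the paper and is presented without proof; the paper merely remarks that Corollary~\ref{theorem:estimate-for-cDV-Fano} serves as a justification for it. There is therefore no proof in the paper against which to compare your proposal.

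Your outline is a reasonable sketch of the expected strategy---extend Namikawa's argument for the terminal case by showing that the local smoothings of hypersurface $\mathrm{cDV}$ singularities globalise, via vanishing of $H^{2}(X, T_{X})$ and control of $H^{1}(X, \mathcal{T}^{1}_{X})$---and you are honest about where the difficulties lie. Indeed, the vanishing \eqref{eq:tangent-vanishing} on a Fano threefold with a one-dimensional $\mathrm{cDV}$ locus is not available by any standard result, and the obstruction group $H^{1}(X, \mathcal{T}^{1}_{X})$ can be nonzero when the singular locus is a curve, so even unobstructedness of the deformation functor is not automatic. Your final paragraph correctly identifies these as the essential gaps; what you have written is a program, not a proof, and the paper does not fill those gaps either.
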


Let us give the following

\begin{definition}
Threefold $U$ with canonical singularities is called a \emph{Fano--Enriques threefold} if $-K_{U}$ is not Cartier but
$-K_{U} \equiv H$ for some ample Cartier divisor $H$.
\end{definition}

\begin{remark}
\label{remark:Fano-Enriques} The study of threefolds possessing an
ample Cartier divisor which is a non-singular Enriques surface was
initiated by G. Fano in \cite{Fano} (see also \cite{Conte} and
\cite{Conte-Murre}). It is easy to see (see for example
\cite{Cheltsov-Enriques}) that if such a threefold has canonical
singularities, then it is a Fano--Enriques threefold. On the other
hand, according to \cite[Proposition 3.1]{Prokhorov-Enriques-1}
and \cite[Proposition 3.3]{Prokhorov-Enriques-1} general element
$S \in |H|$ on Fano--Enriques threefold $U$ has only Du Val
singularities and the minimal resolution of $S$ is an Enriques
surface. Moreover, if $U$ has isolated singularities and $H^{3}
\ne 2$, then general element in $|H|$ is a non-singular Enriques
surface (see \cite[Corollary 3.7]{Prokhorov-Enriques-1}).

According to \cite{Cheltsov-Enriques} the equivalence $2(K_{U}+S)
\sim 0$ holds. Thus, on $U$ we have $-K_{U} \sim_{\mathbb{Q}} H$.
Moreover, by \cite[Proposition 14]{Cheltsov-Enriques-1} if $-K_{U}
\sim_{\mathbb{Q}} nH'$ for some integer $n > 0$ and ample Cartier
divisor $H'$, then $n = 1$. In particular, $\mathbb{Q}$-Fano
threefolds of Fano index $1$ form a subclass of Fano--Enriques
threefolds. $\mathbb{Q}$-Fano threefolds of Fano index $1$ with
cyclic quotient singularities were classified in \cite{Bayle} and
\cite{Sano}.
\end{remark}

Let $U$ be a Fano--Enriques threefold such that $-K_{U} \equiv H$
for some Cartier divisor $H$. By \cite[Lemma
2.2]{Prokhorov-Enriques-1} the value $g: = \frac{1}{2}H^{3} + 1$
is an integer number. Let us call it the \emph{genus} of $U$. The
problem of genus bound for Fano--Enriques threefolds was
investigated in \cite{Prokhorov-Enriques} and \cite{KLM}. In
\cite{Prokhorov-Enriques} the sharp bound $g \leqslant 17$ was
proved under the assumption that $U$ has only isolated
singularities.

In our setting we have

\begin{corollary}
\label{theorem:genus-of-FE-threefold}
Let $U$ be  a Fano--Enriques threefold of genus $g$. Then $g \leqslant 17$.
\end{corollary}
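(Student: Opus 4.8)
The plan is to pass from the Fano--Enriques threefold $U$ to an honest Fano threefold with canonical Gorenstein singularities by taking the natural double cover, and then to feed the degree bounds of Theorems~\ref{theorem:Prokhorov-degree} and \ref{theorem:main} back into a bound on $g$. By Remark~\ref{remark:Fano-Enriques} we have $2(K_{U}+S)\sim 0$, so $\tau:=K_{U}+S$ is a $2$-torsion class in $\mathrm{Cl}(U)$; it is nonzero, since $\tau\sim 0$ would give $-K_{U}\sim S\sim H$ Cartier, contrary to the definition of a Fano--Enriques threefold. First I would form the associated connected double cover $\pi\colon X\to U$ with $\pi_{*}\mathcal{O}_{X}=\mathcal{O}_{U}\oplus\mathcal{O}_{U}(\tau)$, which is \'etale in codimension one. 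Then $X$ is normal with canonical singularities, $K_{X}=\pi^{*}K_{U}$, and because $\pi^{*}\tau\sim 0$ one gets $-K_{X}\sim\pi^{*}S\sim\pi^{*}H$, which is ample and Cartier. Hence $X$ is a Fano threefold with canonical Gorenstein singularities.

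Next I compute the degree: $-K_{X}^{3}=(\pi^{*}H)^{3}=2H^{3}=4(g-1)$, using $H^{3}=2(g-1)$. In particular $-K_{X}^{3}\equiv 0\pmod 4$. If $-K_{X}^{3}\leqslant 64$ we are done, as then $g\leqslant 17$. Otherwise, by Theorems~\ref{theorem:Prokhorov-degree} and \ref{theorem:main} the only admissible values with $-K_{X}^{3}>64$ are $66$, $70$ and $72$; since $4\mid -K_{X}^{3}$, the only possibility is $-K_{X}^{3}=72$, i.e. $X\cong\mathbb{P}(3,1,1,1)$ or $X\cong\mathbb{P}(6,4,1,1)$ and $g=19$. (Note $g=18$ is already excluded because $68\notin\{66,70,72\}$ is not the degree of any such threefold, and $g\geqslant 20$ violates Theorem~\ref{theorem:Prokhorov-degree}.) So everything reduces to excluding these two weighted projective spaces as covers $X$.

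To exclude them I would use the Galois involution $\iota$ with $X/\langle\iota\rangle=U$, which acts freely in codimension one. A general $\tilde S\in|-K_{X}|$ is a $K3$ surface with Du Val singularities, and its image $S=\tilde S/\iota$ is the general member of $|H|$, which by Remark~\ref{remark:Fano-Enriques} must have Du Val singularities with Enriques minimal resolution. Comparing topological Euler numbers, the relation $e(\tilde S)=2\,e(S)-|\mathrm{Fix}(\iota)\cap\tilde S|$ together with $e(\tilde S)=24$, $e(S)=12$ forces $\mathrm{Fix}(\iota)\cap\tilde S=\emptyset$; that is, $\iota$ must restrict to a fixed-point-free Enriques involution on $\tilde S$. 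I would then contradict this by analysing involutions directly on the graded coordinate rings. For $\mathbb{P}(3,1,1,1)$ one checks that every nontrivial involution has a one-dimensional fixed locus (the non-scalar part of the action on the weight-one coordinates always produces a fixed $\mathbb{P}^{1}$), and this curve meets the ample surface $\tilde S$, a contradiction. For $\mathbb{P}(6,4,1,1)$ the involution $\iota$ must fix the two singular points $[1:0:0:0]$ and $[0:1:0:0]$ separately, since they carry distinct quotient singularities and cannot be interchanged, and it must preserve the singular rational curve joining them; a local analysis at these points shows that either a fixed curve appears and again meets $\tilde S$, or the quotient $U$ acquires a singularity that is not canonical.

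The hard part will be the exclusion of $\mathbb{P}(6,4,1,1)$. Unlike $\mathbb{P}(3,1,1,1)$, it admits involutions whose fixed locus may be zero-dimensional, so the contradiction cannot be obtained from a fixed curve alone; it must be extracted from the local structure of the quotient at the fixed singular points, showing that the resulting singularity of $U$ fails to be canonical, or from a finer incompatibility with the Enriques structure of the general member of $|H|$. Once the case $-K_{X}^{3}=72$ is ruled out, we conclude $-K_{X}^{3}\leqslant 64$, whence $4(g-1)\leqslant 64$ and $g\leqslant 17$.
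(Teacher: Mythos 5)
Your overall strategy is the same as the paper's: take the degree-two cover $\pi\colon X\to U$ associated with the $2$-torsion class $K_{U}+S$, observe that $X$ is a Fano threefold with canonical Gorenstein singularities of degree $-K_{X}^{3}=4g-4$, use divisibility by $4$ together with Theorems~\ref{theorem:Prokhorov-degree} and \ref{theorem:main} to reduce to $X\simeq\mathbb{P}(3,1,1,1)$ or $\mathbb{P}(6,4,1,1)$, and then derive a contradiction from the Galois involution. Up to that point the proposal is fine.

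The problem is the last step, which you explicitly leave open for $\mathbb{P}(6,4,1,1)$ (``the hard part''), and your assessment of why it is hard is not correct. You claim that $\mathbb{P}(6,4,1,1)$ may admit involutions with zero-dimensional fixed locus, so that a local analysis of the quotient singularities would be needed. In fact every involution of $\mathbb{P}(6,4,1,1)$ has a pointwise-fixed \emph{curve}: diagonalize $\tau$ on weighted coordinates $x_{0},x_{1},x_{2},x_{3}$ of weights $6,4,1,1$ with eigenvalues $\pm1$; if $x_{0},x_{1}$ share an eigenvalue the curve $\{x_{2}=x_{3}=0\}$ is fixed pointwise, if $x_{2},x_{3}$ share one the curve $\{x_{0}=x_{1}=0\}$ is, and in the remaining case one may normalize $\tau$ to $[x_{0}:x_{1}:x_{2}:x_{3}]\mapsto[x_{0}:-x_{1}:x_{2}:-x_{3}]$, whose fixed locus contains the curve $\{x_{1}=x_{3}=0\}$. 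Since the Galois involution has \emph{finite} fixed locus (the branch locus of $\pi$ is the set of points where $K_{U}$ is not Cartier, which is finite because $-K_{U}\sim_{\mathbb{Q}}S$ with $S$ normal), this is an immediate contradiction and no analysis of whether the quotient singularity is canonical is required. As written, your argument does not establish the corollary, because the case $X\simeq\mathbb{P}(6,4,1,1)$ is not excluded.

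Two secondary points. First, you only record that $\iota$ acts freely in codimension one; you should upgrade this to finiteness of the fixed locus as above, after which any fixed curve (for either weighted projective space) is already absurd and the Euler-number computation on $\tilde S$ becomes unnecessary. Second, that Euler-number step is itself not clean as stated: a general member of $|-K_{X}|$ on $\mathbb{P}(6,4,1,1)$ is not a smooth K3 (it meets the singular line of $\mathbb{P}(6,4,1,1)$ in two $A_{1}$ points), so $e(\tilde S)\ne 24$ there, and the identity $e(\tilde S)=2e(S)-|\mathrm{Fix}(\iota)\cap\tilde S|$ needs correction terms for the Du Val points. Your treatment of $\mathbb{P}(3,1,1,1)$ is essentially sound and parallels the paper's (which uses the cone over the Veronese surface instead of coordinates), but the corollary stands or falls with the $\mathbb{P}(6,4,1,1)$ case.
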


\begin{proof}
Since $2(K_{U} + S) \sim 0$ for $S \in |H|$ (see
Remark~\ref{remark:Fano-Enriques}), we can take a global log
canonical cover $\pi : X \longrightarrow U$ with respect to $K_{U}
+ S$ (see \cite{Kawamata}). Morphism $\pi$ has degree $2$ and the
equivalence $\pi^{*}(K_{U}+S) \sim 0$ holds. In particular, $X$ is
a Fano threefold with canonical Gorenstein singularities and
degree $-K_{X}^{3} = 4g - 4$. Moreover, $\pi$ is ramified
precisely in that points on $U$ where $K_{U}$ is not Cartier.
Since $-K_{U} \sim_{\mathbb{Q}} S$ and $S$ is normal, the set of
such points is finite. Hence the Galous involution of $\pi$
induces on $X$ an automorphism $\tau$ of order $2$ such that
$\tau$ acts freely in codimension $2$.

Suppose that $g > 17$. Then we get $-K_{X}^{3} > 64$. Since
$-K_{X}^{3}$ is divisible by $4$, from
Theorems~\ref{theorem:Prokhorov-degree} and \ref{theorem:main} we
deduce that $X = \mathbb{P}(3,1,1,1)$ or $\mathbb{P}(6,4,1,1)$.

If $X = \mathbb{P}(3,1,1,1)$, then the linear system
$|-\frac{1}{2}K_{X}|$ gives an embedding of $X$ in
$\mathbb{P}^{10}$ such that the image of $X$ is the cone over del
Pezzo surface of degree $9$. Moreover, since $|-\frac{1}{2}K_{X}|$
is $\tau$-invariant, there is a $\tau$-invariant hyperplane
section $S$ of $X \subset \mathbb{P}^{10}$ not passing through the
vertex. Then $S \simeq \mathbb{P}^{2}$ and hence $\tau$ fixes
infinitely many points on $S$ which is impossible.

Thus, $X = \mathbb{P}(6,4,1,1)$. Take weighted projective
coordinates $x_{0}$, $x_{1}$, $x_{2}$, $x_{3}$ on $X$ such that
$\deg x_{0} = 6$, $\deg x_{1} = 4$, $\deg x_{2} = \deg x_{3}  = 1$
and $x_{i}$ is an eigen function with an eigen value $\pm 1$ with
respect to $\tau$, $1 \leqslant i \leqslant 4$. If both $x_{0}$,
$x_{1}$ ($x_{2}$, $x_{3}$, respectively) have the same eigen
value, then the curve given by equations $x_{2}=x_{3}=0$
($x_{0}=x_{1}=0$, respectively) consists of $\tau$-fixed points
which is impossible. Thus, we can assume that $\tau$ is given by
the following formula
$$
[x_{0}:x_{1}:x_{2}:x_{3}] \mapsto [x_{0}:-x_{1}:x_{2}:-x_{3}].
$$
But again the curve given by equations $x_{1} = x_{3} = 0$
consists of $\tau$-fixed points. The obtained contradiction
completes the proof of
Corollary~\ref{theorem:genus-of-FE-threefold}.
\end{proof}

We prove Theorem~\ref{theorem:main} following the ideas of \cite{Prokhorov-degree}. To be more precise, for the given Fano
threefold $X$ satisfying the conditions of Theorem~\ref{theorem:main}
we take a \emph{terminal $\mathbb{Q}$-factorial modification} $\varphi: Y \longrightarrow X$. Here morphism $\varphi$ is
crepant and threefold $Y$ has terminal factorial singularities. In particular, $-K_{Y}^{3} = -K_{X}^{3}$ and thus all the
considerations can be lifted to $Y$.

For $Y$ the complete description of $K$-negative extremal contractions can be obtained. With this description at hand in
Section~\ref{section:begin} we find $X$ with $-K_{X}^{3} = 70$ and non-$\mathrm{cDV}$ singularities
and reduce the proof of Theorem~\ref{theorem:main} to the
case when there are no $K$-negative extremal contractions of $Y$ to threefolds with terminal factorial singularities
having nef and big anti-canonical divisor. This and results in \cite{Prokhorov-degree} imply that the initial Fano threefold
$X$ is singular along a line or contains a plane.

In Section~\ref{section:reduction} following
\cite{Prokhorov-degree} we obtain a Mori fibre space $W$ with a
birational map $W \dashrightarrow X$ given by the linear subsystem
in $|-K_{W}|$. It is straightforward from \cite{Prokhorov-degree}
that $W$ is not a $\mathbb{Q}$-Fano threefold.

In Section~\ref{section:curve} for $W$ a del Pezzo fibration we find $X$ with $-K_{X}^{3} = 66$ and non-$\mathrm{cDV}$ singularities
and reduce the proof of
Theorem~\ref{theorem:main} to the case when $W$ is a conic bundle. Following \cite{Prokhorov-degree} we will exclude this
case in Section~\ref{section:surface}.
\\

I would like to thank I. Cheltsov, Yu. G. Prokhorov and C. Shramov for their numerous remarks and attention to the present paper which led to
the correction of the gap in the older version. I also would like to express my gratitude to V. A. Iskovskikh for stimulating discussions.

\section{Basic notions and statements}
\label{section:basic}

We use standard notions and facts from the Minimal Model Program (see for example \cite{Kollar-Mori}).
Recall that algebraic variety $X$ is called a \emph{Fano variety} if divisor $-K_{X}$ is an ample
$\mathbb{Q}$-Cartier divisor. If divisor $-K_{X}$ is only nef and big, then $X$ is called a \emph{weak Fano variety}.

In the present article we consider three-dimensional (weak) Fano varieties only with (no more than) canonical Gorenstein singularities.

Let $X$ be a Fano threefold.
From the Riemann--Roch Formula and Kawamata--Vieweg Vanishing Theorem it is easy to find the dimension of the anti-canonical
linear system on $X$ (see \cite{Reid-morphisms
-Kawamata}):
$$
\dim |-K_{X}| = -\frac{1}{2}K_{X}^{3} + 2
$$
The constant $-K_{X}^{3}$ is called the \emph{degree} of $X$ and $g:= -\frac{1}{2}K_{X}^{3} + 1$
is the \emph{genus} of $X$. Thus, $\dim |-K_{X}| = g + 1$ and $-K_{X}^{3} = 2g-2$. In particular, $-K_{X}^{3}$ is an even
number and for $X$ satisfying the conditions of Theorem~\ref{theorem:main} we have $-K_{X}^{3} \in \{66,68,70\}$.

Furthermore, we have

\begin{theorem}[\cite{Reid-morphisms-Kawamata}]
\label{theorem:elefant}
A general member $L \in |-K_{X}|$ has only Du Val singularities.
\end{theorem}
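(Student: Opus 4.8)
The plan is to combine adjunction with the local theory of canonical Gorenstein threefold singularities. Since $X$ is Gorenstein, $K_{X}$ is Cartier, so every $L \in |-K_{X}|$ is a Cartier divisor with $K_{X}+L \sim 0$. By adjunction $K_{L} \sim (K_{X}+L)|_{L} \sim 0$, so a general member $L$ is a Gorenstein surface with trivial dualizing sheaf. For such a surface ``Du Val'' is equivalent to ``canonical'', equivalently to ``rational Gorenstein'', so the whole statement reduces to showing that a general $L$ is normal and has at worst canonical singularities.

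The technical heart is base-point-freeness of $|-K_{X}|$. Since $-K_{X}$ is ample and Cartier, Kawamata--Viehweg vanishing gives $H^{i}(X,-K_{X}) = H^{i}(X, K_{X}+(-2K_{X})) = 0$ for $i>0$ (with $-2K_{X}$ ample), which already underlies the dimension count $\dim|-K_{X}| = -\tfrac{1}{2}K_{X}^{3}+2$ quoted above. Combined with the positivity of $-K_{X}$ and the base-point-free theorem one wants to conclude $\mathrm{Bs}\,|-K_{X}| = \emptyset$; for a general canonical Gorenstein Fano threefold this is genuinely the step to fight with, since one must invoke the effective freeness results of Shokurov and Reid rather than a one-line vanishing argument. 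Granting freeness, Bertini's theorem shows that a general $L$ is smooth away from $\mathrm{Sing}(X)$. As $X$ is normal, $\mathrm{Sing}(X)$ has codimension $\geqslant 2$, hence is a curve or a finite set; a general $L$ does not contain this locus and so meets it in finitely many points. Thus $L$ is regular in codimension one, and being Cartier in the Cohen--Macaulay threefold $X$ it is itself $S_{2}$, so $R_{1}+S_{2}$ gives normality of $L$.

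It remains to control $L$ at the finitely many points $P \in L \cap \mathrm{Sing}(X)$. Here I would use Reid's characterization of canonical Gorenstein threefold points: such a $P$ is a compound Du Val ($\mathrm{cDV}$) point, meaning that a general hyperplane section germ through $P$ carries at worst a Du Val singularity at $P$. The remaining subtlety is to check that the global system $|-K_{X}|$ actually realizes a \emph{general} local hyperplane section at each such $P$. Because $|-K_{X}|$ is free and of large dimension, while only finitely many points must be controlled, a parameter count shows that a general $L$ can be taken to restrict to a general hyperplane germ at every $P$ simultaneously, so $L$ has only a Du Val singularity there. Together with smoothness elsewhere this yields the claim.

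The main obstacle is therefore twofold and concentrated in the passage from global to local: establishing base-point-freeness of $|-K_{X}|$, and then verifying that the general global anticanonical member is locally general at each singular point. Both rest on the finer positivity and vanishing theory and on Reid's classification of $\mathrm{cDV}$ points, rather than on any formal manipulation of adjunction.
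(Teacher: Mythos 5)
The paper does not prove this statement at all: it is quoted as a known theorem from Reid's preprint \cite{Reid-morphisms-Kawamata}, so the only question is whether your sketch would actually establish it. It would not, for two reasons, one of which is fatal.

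The fatal one is your appeal to ``Reid's characterization of canonical Gorenstein threefold points'' as compound Du Val points. That characterization is false: $\mathrm{cDV}$ points form a \emph{proper} subclass of canonical Gorenstein threefold points. This is visible inside the paper itself: Proposition~\ref{theorem:Reid-CDV} shows a $\mathrm{cDV}$ point admits no crepant divisor over it, whereas the vertex of $\mathbb{P}(3,1,1,1)$ and the points $P,Q$ of $\mathbb{P}(6,4,1,1)$ (Examples~\ref{example:examp-1} and \ref{example:examp-2}) are canonical Gorenstein but non-$\mathrm{cDV}$; indeed both varieties produced by Theorem~\ref{theorem:main} have non-$\mathrm{cDV}$ points. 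At such a point Proposition~\ref{theorem:inductive-CDV} tells you precisely that the \emph{general} hyperplane section is \emph{not} Du Val there, so your local argument breaks exactly where the theorem has content: one must show that the general anticanonical member either avoids the non-$\mathrm{cDV}$ locus or is nonetheless Du Val there, and that is the heart of Reid's proof, not a consequence of the $\mathrm{cDV}$ local theory. The second gap is the base-point-freeness you ``grant'': by Proposition~\ref{theorem:non-free-antican-system} the system $|-K_{X}|$ genuinely can have base points (the Jahnke--Radloff examples, with $-K_{X}^{3}\leqslant 22$), and the theorem is asserted for all canonical Gorenstein Fano threefolds, so the statement cannot be reduced to the free case; Reid's argument has to analyze the base locus rather than assume it away. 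Your normality argument ($R_{1}+S_{2}$ via Bertini off $\mathrm{Sing}(X)$) and the adjunction computation $K_{L}\sim 0$ are fine as far as they go, but they are the easy part.
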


\begin{proposition}[\cite{Jahnke-Radloff}]
\label{theorem:non-free-antican-system}
In the above notation, if $\mathrm{Bs}|-K_{X}| \ne \emptyset$, then $-K_{X}^{3} \leqslant 22$.
\end{proposition}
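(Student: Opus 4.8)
The plan is to restrict the problem to a general anticanonical surface and then invoke the theory of linear systems on K3 surfaces. First I would take a general member $S \in |-K_{X}|$. By Theorem~\ref{theorem:elefant} the surface $S$ has only Du Val singularities, and adjunction gives $K_{S} \sim (K_{X}+S)|_{S} \sim 0$, so $S$ is a K3 surface with at worst Du Val singularities. Setting $H := -K_{X}|_{S}$, an ample Cartier divisor with $H^{2} = -K_{X}^{3}$, I would use the restriction sequence
\[
0 \longrightarrow \mathcal{O}_{X} \longrightarrow \mathcal{O}_{X}(-K_{X}) \longrightarrow \mathcal{O}_{S}(H) \longrightarrow 0
\]
together with $H^{1}(\mathcal{O}_{X}) = 0$ (Kawamata--Viehweg) to see that $H^{0}(X,-K_{X}) \to H^{0}(S,H)$ is surjective. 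Since $\mathrm{Bs}|-K_{X}|$ is contained in every member, in particular in the general $S$, surjectivity yields the identification $\mathrm{Bs}|-K_{X}| = \mathrm{Bs}|H|$. Thus it suffices to bound $H^{2}$ under the assumption that the ample system $|H|$ on the K3 surface $S$ has base points.

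Next I would pass to the minimal resolution $\mu : \tilde{S} \to S$, where $\tilde{H} := \mu^{*}H$ is nef and big with $\tilde{H}^{2} = H^{2}$ and $H^{0}(\tilde{S},\tilde{H}) = H^{0}(S,H)$, and apply Saint-Donat's analysis of complete linear systems on K3 surfaces. A nef and big divisor whose base locus is non-empty must have a non-trivial fixed part, and its moving part is forced to be composed with a genus-one pencil, i.e. a multiple $aE$ of an effective class $E$ with $E^{2}=0$ and $\tilde{H}\cdot E = 1$ through a base point. Consequently $\phi_{|H|}$ contracts the members of this pencil and maps $S$ onto a rational curve. I note that Theorem~\ref{theorem:elefant} already forbids $|-K_{X}|$ from having a fixed divisorial component, since a reducible or non-reduced general member could not carry only Du Val singularities; hence the fixed part seen on $S$ is a genuine base curve of $|H|$, of codimension one on $S$ but only codimension two on $X$.

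Translating back to $X$, the fact that the general anticanonical surface maps to a curve forces the anticanonical image $\phi_{|-K_{X}|}(X)$ to be two-dimensional: for the general $S$ cut out by the corresponding hyperplane one has $\phi_{|-K_{X}|}(X)\cap\{\text{hyperplane}\} = \phi_{|H|}(S)$, which is one-dimensional. At this point the statement becomes the classification of Fano threefolds with canonical Gorenstein singularities whose anticanonical map degenerates to a fibration onto a surface, so that essentially all of $-K_{X}^{3}$ is carried by the base locus after resolving $\mathrm{Bs}|-K_{X}|$. Using the structure of this image surface (a scroll or cone of small degree spanning $\mathbb{P}^{g+1}$) together with the genus-one pencil produced above, one extracts the numerical inequality $-K_{X}^{3} = \tilde{H}^{2} \leqslant 22$.

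The main obstacle is precisely this last global step. The reduction to a K3 surface and the local assertion that base points come with a genus-one pencil are essentially formal; the real work lies in controlling the interaction between the pencil and the fixed (base) locus on the threefold — equivalently, in bounding the excess of $\pi^{*}(-K_{X})$ over its moving part on a resolution $\pi$ of $\mathrm{Bs}|-K_{X}|$ — and in ruling out the large-degree configurations. This is the delicate case analysis of \cite{Jahnke-Radloff}, and it is where the sharp constant $22$, rather than a crude bound, appears.
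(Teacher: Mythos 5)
The paper offers no proof of this proposition: it is imported verbatim from \cite{Jahnke-Radloff}, so there is no internal argument to compare yours against. Judged as a self-contained proof, your proposal has a genuine gap at the decisive step. The reduction you carry out is correct and standard: Theorem~\ref{theorem:elefant} plus adjunction makes the general $S\in|-K_{X}|$ a K3 surface with Du Val singularities; $H^{1}(\mathcal{O}_{X})=0$ gives surjectivity of restriction, hence $\mathrm{Bs}|-K_{X}|=\mathrm{Bs}|H|$; and Saint-Donat's theorem on the minimal resolution forces $\mu^{*}H\sim gE+\Gamma$ with $E^{2}=0$, $\Gamma^{2}=-2$, $E\cdot\Gamma=1$, so the moving part is composed with an elliptic pencil and the anticanonical image of $X$ is a surface. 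But observe that this structure yields no restriction whatsoever on $g$: the numerology is self-consistent for every value ($h^{0}=g+1$, $(gE+\Gamma)^{2}=2g-2$), the image is a surface of minimal degree $g$ in $\mathbb{P}^{g+1}$, and such surfaces exist for all $g$. Nothing you have written distinguishes $g=12$ from $g=100$.

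The entire content of the proposition, namely the constant $22$, therefore lives in the sentence "one extracts the numerical inequality," which is exactly where you stop. To actually extract it one must study the base curve $C=\mathrm{Bs}|-K_{X}|$ (whose degree $-K_{X}\cdot C=g-2$ grows with $g$, so it cannot be dismissed as small), resolve it, analyze the induced elliptic fibration over the minimal-degree surface together with the multiple fibres and the discrepancy between $-K_{X}^{3}$ and the degree of the moving part, and rule out the large-genus configurations one by one. You candidly identify this as "the delicate case analysis of \cite{Jahnke-Radloff}" — which amounts to citing the result you are supposed to prove. So the proposal is a correct and useful reduction to the K3/Saint-Donat picture, but it is not a proof of the stated bound.
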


For the movable anti-canonical linear system $|-K_{X}|$ let us denote by
$\varphi_{|-K_{X}|}: X \dashrightarrow \mathbb{P}^{g+1}$ the corresponding rational map.

\begin{proposition}[{\cite[Theorem 1.5]{CPS}}]
\label{theorem:free-antican-system-1}
In the above notation, if $\varphi_{|-K_{X}|}$ is a morphism which is not an embedding, then $-K_{X}^{3} \leqslant 40$.
\end{proposition}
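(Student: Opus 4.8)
The plan is to reduce the statement to the classical theory of projective models of $K3$ surfaces applied to a general anticanonical section, and then to use the structure of the resulting double cover to bound the degree. First I would observe that, since $\varphi_{|-K_{X}|}$ is assumed to be a morphism, the system $|-K_{X}|$ is base point free, while $-K_{X}$ ample gives $-K_{X}\cdot C>0$ for every curve $C$; hence $\varphi:=\varphi_{|-K_{X}|}$ contracts no curve and is a finite morphism onto its image $\bar X:=\varphi(X)\subset\mathbb{P}^{g+1}$, with $\dim\bar X=3$ and
$$\deg\varphi\cdot\deg\bar X=(-K_{X})^{3}=2g-2.$$
Because $\bar X$ is a non-degenerate threefold in $\mathbb{P}^{g+1}$ one has $\deg\bar X\geqslant g-1$, so $\deg\varphi\leqslant 2$. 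Thus $\varphi$ can fail to be an embedding only if $\deg\varphi=2$, in which case $\bar X$ is a threefold of minimal degree $g-1$, or if $\varphi$ is birational but not an isomorphism onto $\bar X$; I will rule out the latter below.

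Next I would pass to a general $L\in|-K_{X}|$. By Theorem~\ref{theorem:elefant} it is a $K3$ surface with Du Val singularities, and $H:=-K_{X}|_{L}$ is an ample Cartier divisor with $H^{2}=(-K_{X})^{3}=2g-2$. From the exact sequence $0\to\mathcal{O}_{X}\to\mathcal{O}_{X}(-K_{X})\to\mathcal{O}_{L}(H)\to 0$ and the vanishing $H^{1}(\mathcal{O}_{X})=0$ (canonical Gorenstein singularities are rational), the restriction $|-K_{X}|\to|H|$ is surjective, so $\varphi|_{L}$ is the map given by the \emph{complete} system $|H|$ of the polarized $K3$ surface $(L,H)$. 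Saint-Donat's analysis of $|H|$ on a $K3$ surface with $H$ ample and base point free shows that $\varphi_{|H|}$ is either a closed embedding of $L$ or a double cover of a surface of minimal degree (the hyperelliptic case, governed by an elliptic pencil $|E|$ with $E\cdot H=2$, together with the single exceptional case $H=2E$, $H^{2}=8$).

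Since $\varphi|_{L}$ being an embedding for general $L$ propagates to $\varphi$ being an embedding of $X$ (separation of points and tangent directions on $X$ is read off from general anticanonical sections through them, using the surjectivity above), the hypothesis forces the hyperelliptic case: $\deg\varphi=2$ and the general $L$ carries such an elliptic pencil. In particular the birational-but-not-embedding alternative does not occur, and it remains to bound $g$ in the double cover case.

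Here $\varphi:X\to\bar X$ is a degree-two morphism onto a threefold $\bar X$ of minimal degree $g-1$ in $\mathbb{P}^{g+1}$, so $\bar X$ is one of the classified varieties of minimal degree — $\mathbb{P}^{3}$, a quadric, a (possibly singular) rational normal scroll, or a cone over the Veronese surface. Writing $H=\mathcal{O}_{\bar X}(1)$, the branch divisor lies in $|-2K_{\bar X}-2H|$, and the requirement that $X$ have canonical Gorenstein singularities bounds the singularities of the branch locus, which in turn forces the scroll invariants to satisfy a numerical inequality; carrying this out case by case should yield $g\leqslant 21$, that is $-K_{X}^{3}\leqslant 40$. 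I expect this last step — controlling the double cover over every type of minimal-degree threefold, and especially over the singular scrolls and cones where $\bar X$ itself is not $\mathbb{Q}$-factorial, and extracting the sharp bound from the canonical Gorenstein condition on the branch divisor — to be the main obstacle.
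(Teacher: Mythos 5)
The paper does not prove this proposition at all: it is imported verbatim as \cite[Theorem 1.5]{CPS}, so the only thing to compare your argument against is the strategy of that reference. Your first half follows it faithfully and is correct: finiteness of $\varphi_{|-K_{X}|}$ from ampleness, the bound $\deg\varphi\cdot\deg\bar{X}=2g-2$ against $\deg\bar{X}\geqslant g-1$ forcing $\deg\varphi\leqslant 2$, the passage to a general Du Val $K3$ element $L\in|-K_{X}|$ with surjective restriction of sections, and Saint-Donat's dichotomy for $|H|$ on $(L,H)$. Two caveats on this half. First, the ``propagation'' step (embedding on a general elephant implies embedding of $X$, which also disposes of the birational-but-not-isomorphism alternative) is asserted rather than argued; it needs either the normality of $\bar{X}$ deduced from projective normality of the general hyperplane section $\varphi(L)$, or a Bertini-type statement that a general member of $|-K_{X}|$ through two prescribed points is still a Du Val $K3$ --- neither is automatic and both require a few lines. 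Second, and more seriously, the entire quantitative content of the proposition is in your last paragraph, which you yourself flag as ``the main obstacle'': the case-by-case analysis of degree-two covers $X\to\bar{X}$ of the varieties of minimal degree $g-1$ (projective space, quadric, Veronese cone, and all rational normal scrolls including the singular cones), with branch divisor in $|-2K_{\bar{X}}-2H|$ and the canonical Gorenstein condition on $X$ translated into inequalities on the scroll invariants, is exactly the theorem of \cite{CPS} being cited, and ``should yield $g\leqslant 21$'' is not a derivation of it.

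So as written this is a correct reduction of the statement to the classification of hyperelliptic Fano threefolds, i.e.\ a roadmap to the cited result, not a proof of it. If your goal is a self-contained argument, the work still to be done is precisely the scroll analysis: parametrize $\bar{X}=\mathbb{P}(\mathcal{O}(d_{1})\oplus\mathcal{O}(d_{2})\oplus\mathcal{O}(d_{3}))$ (and its cones), write down $-2K_{\bar{X}}-2H$ in terms of the tautological class and the fibre class, and show that effectivity of the branch divisor together with the requirement that the double cover have only canonical singularities caps $\sum d_{i}=g-1$ at $20$. Without that, the proposal establishes only that a non-embedding $\varphi_{|-K_{X}|}$ forces $X$ to be hyperelliptic, not the bound $-K_{X}^{3}\leqslant 40$.
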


\begin{proposition}[{\cite[Theorem 1.6]{CPS}}]
\label{theorem:free-antican-system-2}
In the above notation, if $\varphi_{|-K_{X}|}$ is an embedding and
$\varphi_{|-K_{X}|}(X)$ is not an intersection of quadrics, then $-K_{X}^{3} \leqslant 54$.
\end{proposition}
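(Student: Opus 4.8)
The plan is to reduce the statement to a theorem about canonically embedded curves. Write $H=-K_{X}$ and suppose $\varphi_{|H|}$ embeds $X$ as $V\subset\mathbb{P}^{g+1}$, so that $\deg V=H^{3}=2g-2$. By Theorem~\ref{theorem:elefant} a general member $S\in|H|$ is a $K3$ surface with Du Val singularities, embedded by $H|_{S}$ with $(H|_{S})^{2}=2g-2$; and a general member $C\in\big||H|_{S}\big|$ is a curve. By adjunction on $S$ one has $K_{C}=(K_{S}+C)|_{C}=H|_{C}$ and $2g_{a}(C)-2=C^{2}=2g-2$, so $C$ is a curve of genus $g$ whose embedding by the restriction of $H$ is exactly the canonical embedding $C\subset\mathbb{P}^{g-1}$. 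Thus the generic curve section of $V$ is a canonical curve of genus $g$, and the whole point is that $V$ is cut out by quadrics precisely when this canonical curve is — which, by Petri's theorem, fails only in two very restricted situations.

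First I would establish that $V$ is projectively normal and arithmetically Cohen--Macaulay. By Kawamata--Viehweg vanishing, $H^{i}(X,-nK_{X})=0$ for all $i>0$ and $n\geqslant 0$, since $-(n+1)K_{X}$ is nef and big; the same holds on the $K3$ section $S$, where moreover Saint-Donat's projective normality for polarized $K3$ surfaces applies. With these vanishings the successive hyperplane-section sequences relating the homogeneous ideals $I_{V}$, $I_{S}$ and $I_{C}$ become exact on global sections in every degree, so that the property ``generated by quadrics'' ascends from the curve section $C$ to the surface section $S$ and then to $V$. Consequently, if $V$ is \emph{not} an intersection of quadrics, then neither is its canonical curve section $C$.

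Now I would invoke Petri's theorem: for $g\geqslant 4$ the homogeneous ideal of a canonical curve is generated by quadrics unless the curve is trigonal or is a smooth plane quintic. The plane-quintic case forces $g=6$, whence $-K_{X}^{3}=10\leqslant 54$ and we are done. In the trigonal case the quadrics through $C$ cut out a two-dimensional rational normal scroll in $\mathbb{P}^{g-1}$; lifting this through the two hyperplane sections exactly as above, the quadrics through $V$ cut out a four-dimensional variety of minimal degree $T\subset\mathbb{P}^{g+1}$, i.e. a rational normal scroll (or a cone over one) of degree $(g+1)-4+1=g-2$. Thus $V$ is realised as a divisor on such a scroll $T$, and it remains to bound $g$ by analysing the class of $V$ in $T$: writing $T$ as a $\mathbb{P}^{3}$-bundle over $\mathbb{P}^{1}$ with tautological and fibre classes, one expresses $V$ in terms of these, imposes that $\deg V=2g-2$ and that $H=-K_{X}$ restricts to the complete, very ample anticanonical class, and matches this against the genus formula. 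Carrying out this intersection computation on $T$ should pin down $g\leqslant 28$, that is, $-K_{X}^{3}\leqslant 54$.

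The main obstacle is exactly the trigonal case: the lifting and the plane-quintic case are routine, but translating the trisecant/scroll structure of the curve section into a genuine four-dimensional scroll containing $V$, and then extracting the sharp numerical bound from the geometry of divisors on that scroll, is where the real work — and the specific constant $54$ — resides. (One could instead apply Saint-Donat's classification directly to the $K3$ section $S$ rather than passing all the way down to $C$, but the curve section together with Petri's theorem gives the cleanest dichotomy.)
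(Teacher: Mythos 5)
The paper does not prove this proposition at all: it is quoted verbatim from \cite[Theorem 1.6]{CPS}, so the only honest comparison is with that reference, and your outline does follow the same route as \cite{CPS} (pass to the canonical curve section, apply Enriques--Babbage/Petri, and in the non-quadratically-generated case realise $X$ as a divisor on a rational normal scroll). The first two stages of your argument are essentially sound, modulo standard care: the general surface section is a K3 with Du Val singularities, so Saint-Donat's projective normality must be applied on the minimal resolution, and the surjectivity of $S^{n}H^{0}(-K_{X})\to H^{0}(-nK_{X})$ does not come from Kawamata--Viehweg alone but from the hyperplane-section/Saint-Donat argument you allude to; also the curve section is automatically non-hyperelliptic because it is embedded by its canonical system, so the Enriques--Babbage dichotomy (trigonal or plane quintic) is the right one.

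The genuine gap is that your proof stops exactly where the content of the proposition lies. The plane-quintic case is indeed trivial, but in the trigonal case you write that an intersection computation on the four-dimensional scroll $T$ ``should pin down $g\leqslant 28$'' without carrying it out, and this computation is not routine: one must write $T$ as $\mathrm{Proj}$ of a rank-$4$ bundle on $\mathbb{P}^{1}$ (or a cone over such), express $X\sim 3M+dL$ in the tautological and fibre classes, and then determine which splitting types and which $d$ actually yield a threefold with canonical Gorenstein singularities whose anticanonical class is the restriction of $M$ and whose anticanonical map is an embedding --- including the degenerate cases where $T$ is a cone. This classification, which occupies a substantial part of \cite{CPS}, is what produces the specific constant $54$; a naive degree/genus matching on $T$ does not by itself exclude larger $g$. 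As written, the bound $-K_{X}^{3}\leqslant 54$ is asserted rather than proved, so the proposal is an accurate roadmap to the cited proof but not a proof.
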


\begin{remark}
\label{remark:anti-canonical-embedding}
From Propositions~\ref{theorem:non-free-antican-system}--\ref{theorem:free-antican-system-2} we deduce that for the given Fano
threefold $X$ with $-K_{X}^{3} \in \{66,68,70\}$ the linear system $|-K_{X}|$ is free and determines an embedding
$\varphi_{|-K_{X}|}: X \longrightarrow \mathbb{P}^{g+1}$ with $g \in \{34,35,36\}$ such that
$X_{2g-2}:= \varphi_{|-K_{X}|}(X)$ is an intersection of quadrics.
\end{remark}

Now let us consider several birational properties of the given Fano threefold $X$.

\begin{theorem}[\cite{Kollar-Mori}]
\label{theorem:terminal-modification}
Let $V$ be a threefold with only canonical singularities. Then there exists a threefold $W$ with only terminal
$\mathbb{Q}$-factorial singularities and a birational contraction $\phi: W \longrightarrow V$ such that
$K_{W} = \phi^{*}K_{V}$. Such $W$ (or $\phi$) is called a \emph{terminal $\mathbb{Q}$-factorial modification of $V$}.
\end{theorem}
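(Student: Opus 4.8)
The plan is to construct $W$ by first resolving $V$ and then running a relative minimal model program over $V$. I would begin by invoking Hironaka's resolution of singularities to choose a non-singular model $g \colon W_{0} \longrightarrow V$. Writing
$$
K_{W_{0}} = g^{*}K_{V} + \sum_{i} a_{i}E_{i},
$$
where the $E_{i}$ are the $g$-exceptional prime divisors, the hypothesis that $V$ has only canonical singularities gives $a_{i} \geqslant 0$ for every $i$. Since a non-singular threefold is automatically terminal and $\mathbb{Q}$-factorial, $W_{0}$ is a legitimate starting point for the three-dimensional MMP. I would also note that $K_{W_{0}} \equiv_{V} \sum_{i} a_{i}E_{i}$ is $g$-effective, hence $g$-pseudoeffective, so the relative program will terminate in a minimal model rather than in a Mori fibre space.

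Next I would run the $K_{W_{0}}$-MMP over $V$. By the existence of flips and the termination of the program for terminal $\mathbb{Q}$-factorial threefolds, each step is a divisorial contraction or a flip over $V$ and preserves both terminality and $\mathbb{Q}$-factoriality; after finitely many steps we reach a model $\phi \colon W \longrightarrow V$ with $K_{W}$ nef over $V$. Thus $W$ is terminal and $\mathbb{Q}$-factorial by construction, and it only remains to verify that $\phi$ is crepant.

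For the final step I would appeal to the negativity lemma. Set $B := K_{W} - \phi^{*}K_{V}$, a divisor supported on the $\phi$-exceptional locus. Because $V$ is canonical, each $\phi$-exceptional prime divisor $E'$ appearing in $B$ has discrepancy $a(E', V) \geqslant 0$, so $B$ is effective. On the other hand $K_{W}$ is $\phi$-nef while $\phi^{*}K_{V}$ is $\phi$-trivial, so $B$ is $\phi$-nef; applying the negativity lemma to $-B$ (which satisfies the hypothesis since $-(-B) = B$ is $\phi$-nef) and observing that $\phi_{*}(-B) = 0$ is effective, we conclude that $-B$ is effective as well. Combined with $B \geqslant 0$ this forces $B = 0$, that is $K_{W} = \phi^{*}K_{V}$, so $\phi$ is crepant and $W$ is the desired terminal $\mathbb{Q}$-factorial modification.

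The substance of the argument is concentrated in the second paragraph: the construction relies on the full force of the three-dimensional minimal model program — the existence of flips and the termination of the relative program — which I would cite from \cite{Kollar-Mori} rather than reprove. The remaining ingredients (resolution of singularities, the elementary bookkeeping of discrepancies under canonicity, and the negativity lemma) are formal, so this is where I expect the only genuine difficulty to lie.
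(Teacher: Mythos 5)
Your argument is correct and is precisely the standard construction (resolve, run the relative $K$-MMP over $V$ using Mori's flip theorem and termination for terminal $\mathbb{Q}$-factorial threefolds, then force crepancy via the negativity lemma applied to the exceptional divisor $B = K_{W}-\phi^{*}K_{V}$, which is both effective by canonicity and $\phi$-nef). The paper does not prove this statement at all — it imports it from \cite{Kollar-Mori} — and your proof is essentially the one found there, so there is nothing to object to.
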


\begin{remark}
\label{remark:terminal-modifications-are-connected-by-flops}
In the notation of Theorem~\ref{theorem:terminal-modification}, let $W$ and $W'$ be two terminal $\mathbb{Q}$-factorial
modifications of $V$. Then $W$ and $W'$ are relative minimal models over $V$ (see \cite{Kollar-Mori}). Thus, the birational
map $W \dashrightarrow W'$ is either an isomorphism or a composition of flops over $V$ (see \cite[Theorem 4.3]{Kollar-flops}).
\end{remark}

Applying Theorem~\ref{theorem:terminal-modification} to $X$ we obtain a birational contraction $\varphi: Y \longrightarrow X$
such that $K_{Y} = \varphi^{*}K_{X}$ and $Y$ is a weak Fano threefold with terminal Gorenstein $\mathbb{Q}$-factorial
singularities and $-K_{Y}^{3} = -K_{X}^{3}$.

Furthermore, we have

\begin{lemma}[{\cite[Lemma 5.1]{Kawamata}}]
\label{theorem:factoriality}
In the above notation, threefold $Y$ has factorial singularities.
\end{lemma}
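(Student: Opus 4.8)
The plan is to prove factoriality pointwise: a normal variety is factorial precisely when each of its local rings is a unique factorization domain, and at a smooth point the local ring is regular, hence a UFD. Since $Y$ has terminal Gorenstein singularities, its singular locus consists of finitely many isolated compound Du Val points $p_{1},\dots,p_{k}$. Thus it suffices to show that $\mathcal{O}_{Y,p_{i}}$ is a UFD for each $i$, equivalently that every Weil divisor class on $Y$ is Cartier. First I would record the exact sequence
\begin{equation*}
0 \longrightarrow \mathrm{Pic}(Y) \longrightarrow \mathrm{Cl}(Y) \stackrel{r}{\longrightarrow} \bigoplus_{i=1}^{k}\mathrm{Cl}(\mathcal{O}_{Y,p_{i}}),
\end{equation*}
where $r$ sends a Weil divisor class to its germs at the singular points: a Weil divisor is Cartier if and only if it is locally principal at every point, and this is automatic away from $\mathrm{Sing}(Y)$. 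Consequently $Y$ is factorial if and only if $r=0$, and in any case $\mathrm{Cl}(Y)/\mathrm{Pic}(Y)$ embeds into $\bigoplus_{i}\mathrm{Cl}(\mathcal{O}_{Y,p_{i}})$.

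The proof then rests on two complementary facts. On the global side, the hypothesis that $Y$ is $\mathbb{Q}$-factorial says precisely that every class in $\mathrm{Cl}(Y)$ has a multiple lying in $\mathrm{Pic}(Y)$; hence $\mathrm{Cl}(Y)/\mathrm{Pic}(Y)$ is a torsion group (the weak Fano condition, via Kawamata--Viehweg vanishing giving $H^{1}(\mathcal{O}_{Y})=H^{2}(\mathcal{O}_{Y})=0$, moreover makes $\mathrm{Pic}(Y)$ finitely generated, so this quotient is in fact finite). On the local side, I would argue that the divisor class group of a compound Du Val germ is \emph{torsion-free}. Granting this, $\mathrm{Cl}(Y)/\mathrm{Pic}(Y)$ is a torsion group that embeds, via $r$, into a torsion-free group, and is therefore zero. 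This forces $\mathrm{Pic}(Y)=\mathrm{Cl}(Y)$, i.e. $Y$ is factorial.

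The main obstacle is the local input: torsion-freeness of $\mathrm{Cl}(\mathcal{O}_{Y,p_{i}})$ at a $\mathrm{cDV}$ point. I would first pass to the analytic germ, using that for a normal excellent local ring the completion map $\mathrm{Cl}(\mathcal{O}_{Y,p})\to\mathrm{Cl}(\widehat{\mathcal{O}}_{Y,p})$ is injective, so that it is enough to treat $\widehat{\mathcal{O}}_{Y,p}$. Torsion-freeness of the class group of a compound Du Val germ is the crucial structural fact; it is reflected in the geometry of a small modification, where the group of Weil divisor classes is realized as the free abelian group generated by the exceptional rational curves (and vanishes when the germ is analytically factorial). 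Verifying this torsion-freeness uniformly across all $\mathrm{cDV}$ types, together with the comparison between the algebraic and analytic class groups, is where the real work lies; once it is in hand, the torsion-versus-torsion-free dichotomy of the previous paragraph closes the argument. I would emphasize that the weak Fano and the precise degree hypotheses of the surrounding problem are inessential here beyond ensuring that $Y$ is a terminal Gorenstein $\mathbb{Q}$-factorial threefold, so that the statement is really a general fact about such threefolds.
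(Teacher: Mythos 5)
The paper offers no proof of this lemma; it simply cites \cite[Lemma 5.1]{Kawamata}, and your outline is essentially Kawamata's argument: $\mathbb{Q}$-factoriality makes $\mathrm{Cl}(Y)/\mathrm{Pic}(Y)$ torsion, the singular locus of a terminal Gorenstein threefold is a finite set of isolated $\mathrm{cDV}$ points, and the quotient embeds into the direct sum of the local class groups, which are torsion-free. So the architecture is correct and matches the cited source. The one place where your write-up falls short of a proof is exactly the step you flag as ``the real work'': you justify torsion-freeness of $\mathrm{Cl}(\widehat{\mathcal{O}}_{Y,p})$ by appealing to a small modification whose exceptional curves freely generate the class group, but this is not a valid general argument --- a factorial $\mathrm{cDV}$ germ admits no small resolution at all, and for a non-factorial one the identification of the class group with a lattice spanned by exceptional curves is itself something to be proved, so the reasoning is close to circular. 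The clean way to close this step is to use that a $\mathrm{cDV}$ point is an isolated hypersurface singularity in $\mathbb{C}^{4}$, so by Milnor its link $L$ is a simply connected $5$-manifold; then the class group of the analytic germ injects into $H^{2}(L,\mathbb{Z})\cong\mathrm{Hom}(H_{2}(L),\mathbb{Z})$ (using the vanishing of $H^{1}$ and $H^{2}$ of the structure sheaf on the punctured neighbourhood, which holds since the singularity is rational of dimension $3$), and $\mathrm{Hom}(H_{2}(L),\mathbb{Z})$ is torsion-free. With that substitution, together with the injectivity of $\mathrm{Cl}(\mathcal{O}_{Y,p})\to\mathrm{Cl}(\widehat{\mathcal{O}}_{Y,p})$ that you correctly invoke, your proof is complete and agrees with the one in the literature.
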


\begin{remark}
\label{remark:K-trivial-contraction-1}
Conversely, for any weak Fano threefold $Y$ with terminal factorial singularities its multiple anti-canonical image
$X = \varphi_{|-nK_{Y}|}(Y)$ for some $n \in \mathbb{N}$ is a Fano threefold such that $K_{Y} = \varphi^{*} K_{X}$.
In particular, $X$ has canonical Gorenstein singularities (see \cite{Kawamata}).
\end{remark}

\begin{proposition}[{\cite[Lemmas 4.2, 4.3]{Prokhorov-degree}}]
\label{theorem:extremal-rays}
In the above notation, the Mori cone $\overline{NE}(Y)$ is polyhedral and generated by contractible extremal rays (i.e., rays $R$ for which there
exists a contraction $\varphi_{R}: Y \longrightarrow Y'$ onto the normal variety $Y'$ such that the image of a curve $C$ is
a point if and only if $[C] \in R$ for the class of numerical equivalence of $C$).
\end{proposition}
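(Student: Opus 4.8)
The plan is to reduce the statement to the Cone and Contraction Theorems for a suitable klt log-Fano pair on $Y$, exploiting that $Y$ has terminal $\mathbb{Q}$-factorial (in particular klt) singularities and that $-K_{Y}$ is nef and big. The key idea is that although $-K_{Y}$ is only nef, one can perturb it inside an auxiliary pair so that the anti-canonical class becomes genuinely \emph{ample}, after which the Cone Theorem of \cite{Kollar-Mori} delivers both polyhedrality and contractibility at once.

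Concretely, I would first use bigness of $-K_{Y}$ to choose a $\mathbb{Q}$-linear equivalence
\[
-K_{Y} \;\equiv\; A + E ,
\]
where $A$ is an ample $\mathbb{Q}$-divisor and $E \geq 0$ is effective. For a small rational $\varepsilon > 0$ set $\Delta_{\varepsilon} := \varepsilon E$. Since $Y$ is terminal and $\lfloor \Delta_{\varepsilon}\rfloor = 0$ for $\varepsilon \ll 1$, the discrepancies of $(Y, \Delta_{\varepsilon})$ stay above $-1$, so $(Y, \Delta_{\varepsilon})$ is klt. Moreover
\[
-(K_{Y} + \Delta_{\varepsilon}) \;\equiv\; (1-\varepsilon)(-K_{Y}) + \varepsilon A ,
\]
which is the sum of a nef and an ample class, hence ample. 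Thus $(Y, \Delta_{\varepsilon})$ is a klt pair with ample anti-canonical class.

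I would then apply the Cone and Contraction Theorems (\cite{Kollar-Mori}) to $(Y, \Delta_{\varepsilon})$. Because $-(K_{Y}+\Delta_{\varepsilon})$ is ample, it is strictly positive on $\overline{NE}(Y)\setminus\{0\}$, so the whole cone $\overline{NE}(Y)$ coincides with its $(K_{Y}+\Delta_{\varepsilon})$-negative part; the Cone Theorem then shows that $\overline{NE}(Y)$ is generated by finitely many extremal rays, i.e. it is polyhedral. For each extremal ray $R$, the Contraction Theorem applied to the $(K_{Y}+\Delta_{\varepsilon})$-negative ray $R$ produces a morphism $\varphi_{R}: Y \longrightarrow Y'$ onto a normal variety which contracts a curve precisely when its numerical class lies in $R$; this is exactly the contractibility asserted in the statement. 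Here the boundary $\Delta_{\varepsilon}$ is only a device: the cone $\overline{NE}(Y)$ and its extremal rays are intrinsic to $Y$, and the perturbation merely renders every ray negative for one fixed klt pair so that the Contraction Theorem applies uniformly.

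The main obstacle is the bookkeeping underlying the perturbation: one must pick $\varepsilon$ small enough that $(Y,\Delta_{\varepsilon})$ stays klt while $-(K_{Y}+\Delta_{\varepsilon})$ remains ample, and one must confirm that contracting a $(K_{Y}+\Delta_{\varepsilon})$-negative extremal ray of the pair is the same as contracting the corresponding extremal ray of $\overline{NE}(Y)$. Both points are routine given that $Y$ is terminal (so small effective boundaries preserve klt) and that the extremal rays are an invariant of the numerical class group; the genuine input is the combination of nefness and bigness of $-K_{Y}$, which is precisely what makes the ample perturbation available.
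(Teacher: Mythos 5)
Your argument is correct and is precisely the standard one underlying the cited result of Prokhorov (the paper itself gives no proof, only the reference): use Kodaira's lemma to write the nef and big divisor $-K_{Y}$ as ample plus effective, perturb to a klt pair $(Y,\varepsilon E)$ with ample anti-log-canonical class, and invoke the Cone and Contraction Theorems. The only points worth making explicit are that $\mathbb{Q}$-factoriality guarantees $\varepsilon E$ is $\mathbb{Q}$-Cartier and that klt-ness for small $\varepsilon$ is the standard perturbation statement (e.g. \cite[Corollary 2.35]{Kollar-Mori}); both are routine, as you note.
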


\begin{remark}
\label{remark:K-trivial-contraction-2}
From Theorem~\ref{theorem:Namikawa-smoothing} for the given Fano threefold $X$ with $-K_{X}^{3} \in \{66,68,70\}$
we deduce that $X \ne Y$. In particular, we have $\rho(Y) > 1$ for the Picard number of $Y$. Hence according to
Proposition~\ref{theorem:extremal-rays}
the anti-canonical divisor $-K_{Y}$ defines a face of the Mori cone $\overline{NE}(Y)$ and $\varphi: Y \longrightarrow X$ is
the contraction of this face.  \end{remark}

\begin{example}
\label{example:examp-1} Let us consider the
$\mathbb{P}^{1}$-bundle $Y =
\mathrm{Proj}(\mathcal{O}_{\mathbb{P}^{2}}\oplus\mathcal{O}_{\mathbb{P}^{2}}(3))$
over $\mathbb{P}^{2}$. For the tautological line bundle
$\mathcal{O}_{Y}(1)$ the linear system $|\mathcal{O}_{Y}(1)|$
determines a birational contraction $\varphi: Y \longrightarrow X$
of the negative section on $Y$. Since $K_{Y} \sim
\mathcal{O}_{Y}(2)$, we have $K_{Y} = \varphi^{*}K_{X}$. This
implies that $X$ is a Fano threefold with canonical Gorenstein
singularities (see Remark~\ref{remark:K-trivial-contraction-1})
and $Y$ is a terminal $\mathbb{Q}$-factorial modification of $X$.
Moreover, by construction $X$ has index $2$ and degree $72$. It
then follows from Theorem~\ref{theorem:Prokhorov-degree}  that $X
\simeq \mathbb{P}(3,1,1,1)$.
\end{example}

\begin{remark}
\label{remark:unique-terminal-modification-1} In the notation of
Example~\ref{example:examp-1}, $\varphi$ is an extremal
contraction and $\varphi$-exceptional locus is isomorphic to
$\mathbb{P}^{2}$. This in particular implies that there are no
small $K$-trivial extremal contractions on $Y$. Then according to
Remark~\ref{remark:terminal-modifications-are-connected-by-flops}
all terminal $\mathbb{Q}$-factorial modifications of
$\mathbb{P}(3,1,1,1)$ are isomorphic to $Y$.
\end{remark}

\begin{example}
\label{example:examp-2} Let us consider the weighted projective
space $X = \mathbb{P}(6,4,1,1)$. The singular locus of $X$ is a
curve $L \simeq \mathbb{P}^{1}$ such that for two points $P$ and $Q$
on $L$ singularities $P \in X$, $Q \in X$ are of types
$\frac{1}{6}(4,1,1)$, $\frac{1}{4}(2,1,1)$, respectively, and for
every point $O \in L \setminus\{P,Q\}$ singularity $O \in X$ is
analytically isomorphic to $(0, o) \in \mathbb{C}^{1} \times U$,
where $o \in U$ is a singularity of type $\frac{1}{2}(1,1)$ (see
\cite[5.15]{Iano-Fletcher}). Since $-K_{X} \sim \mathcal{O}_{X}(12)$
is ample, this implies that $X$ is a Fano threefold with canonical
Gorenstein singularities.

Let $\sigma_{1}: Y_{1} \longrightarrow X$ be the blow up at the point $P$ with weights $\frac{1}{6}(4,1,1)$. Then the
singular locus of the threefold $Y_{1}$ is a curve $L_{1}$ such that
for two points $P_{1}$ and $Q_{1}$ on $L_{1}$ singularities $P_{1} \in Y_{1}$, $Q_{1} \in Y_{1}$ are of type $\frac{1}{4}(2,1,1)$
and for every point $O \in L_{1} \setminus\{P_{1},Q_{1}\}$ singularity $O \in Y_{1}$ is analytically isomorphic to
$(0, o) \in \mathbb{C}^{1} \times U$, where $o \in U$ is a singularity of type $\frac{1}{2}(1,1)$.

Let $\sigma_{2}: Y_{2} \longrightarrow Y_{1}$ be the blow up at the points $P_{1}$ and $Q_{1}$ with weights
$\frac{1}{4}(2,1,1)$. Then the singular locus of the threefold $Y_{2}$ is a curve $L_{2} \simeq \mathbb{P}^{1}$ such that
for every point $O$ on $L_{2}$ singularity $O \in Y_{2}$ is analytically isomorphic to
$(0, o) \in \mathbb{C}^{1} \times U$, where $o \in U$ is a singularity of type $\frac{1}{2}(1,1)$.

Finally, the blow up $\sigma_{3}: Y \longrightarrow Y_{2}$ of the ideal of the curve $L_{2}$ on $Y_{2}$ leads to the
non-singular threefold $Y$ and the birational contraction $\varphi: Y \longrightarrow X$. By construction morphisms
$\sigma_{i}$ are crepant, $1\leqslant i\leqslant 3$, and morphism $\varphi$ is a composition of $\sigma_{i}$.
This implies that $K_{Y} = \varphi^{*}K_{X}$ and $Y$ is a terminal $\mathbb{Q}$-factorial modification of $X$.
\end{example}

\begin{remark}
\label{remark:unique-terminal-modification-2} In the notation of
Example~\ref{example:examp-1}, $\varphi$ is a composition of
extremal contractions and $\varphi$-exceptional locus is of pure
codimension $1$. This in particular implies that there are no
small $K$-trivial extremal contractions on $Y$. Then according to
Remark~\ref{remark:terminal-modifications-are-connected-by-flops}
all terminal $\mathbb{Q}$-factorial modifications of
$\mathbb{P}(6,4,1,1)$ are isomorphic to $Y$.
\end{remark}

In the above notation, let us consider a $K$-negative extremal contraction $f: Y \longrightarrow Y'$. We have

\begin{proposition}[{\cite[Propositions 4.11, 5.2]{Prokhorov-degree}}]
\label{theorem:non-birational-contraction}
If $\dim Y' < \dim Y$, then $-K_{X}^{3} \notin \{66,68,70\}$.
\end{proposition}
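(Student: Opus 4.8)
The plan is to analyze the extremal contraction $f : Y \longrightarrow Y'$ in the case $\dim Y' < \dim Y$, i.e.\ when $f$ is a Mori fibre space. Since $Y$ has terminal factorial Gorenstein singularities and $-K_Y$ is nef and big with $-K_Y^3 = -K_X^3 \in \{66,68,70\}$, the general fibre of $f$ is a (weak) del Pezzo or conic, and $\dim Y' \in \{0,1,2\}$. I would treat the three cases separately. When $\dim Y' = 0$, $Y$ is a $\mathbb{Q}$-Fano threefold with $\rho(Y)=1$, but by Remark~\ref{remark:K-trivial-contraction-2} we have $\rho(Y) > 1$, a contradiction; so this case is vacuous and only $\dim Y' \in \{1,2\}$ remain. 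The strategy throughout is to bound $-K_Y^3$ from above by combining the fibration structure with the constraint that $-K_Y$ is nef and big, and with the upper bounds coming from the geometry of the base and the general fibre.

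For $\dim Y' = 2$ (conic bundle), $Y'$ is a normal surface and $f$ is a conic bundle with discriminant curve $\Delta \subset Y'$. Here I would use the standard formula $-K_Y^3 = (\text{intersection-theoretic expression in } K_{Y'}^2, \text{ and the class of } \Delta)$ together with the fact that $-K_Y$ nef forces $-4K_{Y'} - \Delta$ to be nef (up to the usual adjustment). The key inequality is that $-K_Y^3 \le 6\,K_{Y'}^2$, or a sharpened version thereof, and since $Y'$ is a (possibly singular) del Pezzo surface of degree at most $9$ one extracts $-K_Y^3 \le 64$ or even smaller, excluding the values $66,68,70$. For $\dim Y' = 1$ (del Pezzo fibration), $Y' \simeq \mathbb{P}^1$ and the general fibre is a del Pezzo surface $F$ of degree $d = K_F^2 \in \{1,\dots,9\}$; here one uses $-K_Y^3 \le 2\,d \cdot (\text{something}) + (\text{boundary correction})$, and the nef-and-big condition on $-K_Y$ caps the degree. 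In both cases the references \cite{Prokhorov-degree}, Propositions~4.11 and 5.2, carry out exactly these numerical estimates, so the proof amounts to invoking them after verifying that $Y$ meets their hypotheses (terminal factorial Gorenstein, $-K_Y$ nef and big, $\rho > 1$).

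The main obstacle is the sharpness of the numerical bounds in the del Pezzo fibration case $\dim Y' = 1$: a naive estimate gives only $-K_Y^3 \le 72$, which would \emph{not} exclude $66, 68, 70$. The delicate point is that when the degree of the general del Pezzo fibre is large (so that $-K_Y^3$ could a priori reach the high sixties), the total space $Y$ acquires a structure forcing either additional singular fibres or a section/multisection that lowers the intersection number, and one must show the extremal values cannot be achieved while keeping $-K_Y$ both nef and \emph{big} on a threefold with only terminal factorial singularities. This is precisely the content of the careful case analysis in \cite[Proposition 5.2]{Prokhorov-degree}, distinguishing the geometry of the reducible and non-reduced fibres.

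Accordingly, my proof reduces to two steps: first, rule out $\dim Y' = 0$ directly using $\rho(Y) > 1$ from Remark~\ref{remark:K-trivial-contraction-2}; second, for $\dim Y' \in \{1,2\}$ quote the explicit degree bounds of \cite[Propositions 4.11, 5.2]{Prokhorov-degree}, after checking that our $Y$ satisfies their standing assumptions, to conclude $-K_Y^3 = -K_X^3 \notin \{66,68,70\}$ in every case. The arithmetic constraint that $-K_X^3$ is even (indeed the only candidate values are $66,68,70$) is what makes the bounds bite, since it suffices to exclude these three integers rather than to produce a general sharp estimate.
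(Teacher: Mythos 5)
Your proposal is correct and coincides with what the paper actually does: the paper offers no argument of its own for this proposition but simply imports it from \cite[Propositions 4.11, 5.2]{Prokhorov-degree}, which is exactly the reduction you arrive at (your preliminary elimination of $\dim Y'=0$ via $\rho(Y)>1$ from Remark~\ref{remark:K-trivial-contraction-2} is a harmless extra step, and your sketches of the numerical bounds for the conic-bundle and del Pezzo fibration cases are only descriptive glosses on the cited results rather than independent claims). No discrepancy with the paper's treatment.
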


Let us assume now that contraction $f$ is birational. Then by
\cite[(2.3.2)]{Mori-flip} morphism $f$ is divisorial. Let $E$ be
the $f$-exceptional divisor.

From the classification of extremal rays on terminal factorial
threefolds in \cite{Cutkosky} we obtain

\begin{lemma}
\label{theorem:0-contraction}
In the above notation, if $f(E)$ is a point, then one of the following holds:
\begin{itemize}
\item $Y'$ has only terminal factorial singularities and is a weak Fano threefold with $-K_{Y'}^{3} > -K_{Y}^{3}$;
\item $E \simeq \mathbb{P}^{2}$, $\mathcal{O}_{E}(E) \simeq \mathcal{O}_{\mathbb{P}^{2}}(2)$ and $f(E) \in Y'$ is a point of
type $\frac{1}{2}(1,1,1)$.
\end{itemize}
\end{lemma}

\begin{corollary}
\label{theorem:0-cor-contraction}
In the assumptions of Lemma~\ref{theorem:0-contraction},
if $f(E) \in Y'$ is a point of
type $\frac{1}{2}(1,1,1)$, then $\varphi(E)$ is a plane on $X$ (a surface $\Pi \simeq \mathbb{P}^{2}$ such that
$K_{X}^{2} \cdot \Pi = 1$).
\end{corollary}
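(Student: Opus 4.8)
The plan is to pin down the restriction $-K_{Y}|_{E}$ by adjunction and then transport that information through the crepant contraction $\varphi$, using the anti-canonical embedding of $X$ to identify the image. First I would compute $-K_{Y}|_{E}$. Since $E \simeq \mathbb{P}^{2}$, adjunction gives $K_{E} = (K_{Y}+E)|_{E}$, that is $\mathcal{O}_{\mathbb{P}^{2}}(-3) \simeq K_{Y}|_{E}\otimes \mathcal{O}_{E}(E)$. As $f$ contracts the divisor $E$ to a point, the normal bundle $\mathcal{O}_{E}(E)$ must be negative, and by Lemma~\ref{theorem:0-contraction} it equals $\mathcal{O}_{\mathbb{P}^{2}}(-2)$. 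Hence $K_{Y}|_{E} \simeq \mathcal{O}_{\mathbb{P}^{2}}(-1)$, so that $-K_{Y}|_{E} \simeq \mathcal{O}_{\mathbb{P}^{2}}(1)$ is ample on $E$.

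Next I would show that $\Pi := \varphi(E)$ is a surface and that $\varphi|_{E}$ is finite. Because $\varphi$ is crepant we have $-K_{Y} = \varphi^{*}(-K_{X})$, so a curve $C \subset Y$ is $\varphi$-contracted only if $-K_{Y}\cdot C = 0$. But every curve $C \subset E$ satisfies $-K_{Y}\cdot C = \deg C > 0$, since $-K_{Y}|_{E} \simeq \mathcal{O}_{\mathbb{P}^{2}}(1)$. Therefore no curve of $E$ is contracted by $\varphi$; in particular $\varphi(E)$ can be neither a point nor a curve, so $\Pi$ is a surface and $\varphi|_{E}\colon E \to \Pi$ is a finite morphism.

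Finally I would identify $\Pi$. By Remark~\ref{remark:anti-canonical-embedding} the system $|-K_{X}|$ embeds $X$ into $\mathbb{P}^{g+1}$ with $\mathcal{O}_{X}(1) = -K_{X}$, and $\varphi^{*}(-K_{X}) = -K_{Y}$, so the composite $E \hookrightarrow Y \to X \hookrightarrow \mathbb{P}^{g+1}$ is given by a linear subsystem of $\bigl||{-K_{Y}}|_{E}\bigr| = |\mathcal{O}_{\mathbb{P}^{2}}(1)|$. Since the image $\Pi$ is a surface, this subsystem must be the complete two-dimensional system $|\mathcal{O}_{\mathbb{P}^{2}}(1)|$; as the latter is very ample, $\varphi|_{E}$ is an isomorphism onto a linearly embedded plane, whence $\Pi \simeq \mathbb{P}^{2}$. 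Consequently $\Pi$ has degree $1$ in $\mathbb{P}^{g+1}$, i.e. $K_{X}^{2}\cdot\Pi = (-K_{X})^{2}\cdot\Pi = \bigl(\varphi|_{E}^{*}(-K_{X})\bigr)^{2} = (-K_{Y}|_{E})^{2} = 1$, as required. (One may also obtain $K_{X}^{2}\cdot\Pi = 1$ purely numerically: the projection formula gives $1 = (-K_{Y}|_{E})^{2} = \deg(\varphi|_{E})\cdot(K_{X}^{2}\cdot\Pi)$, and since both factors are positive integers, each equals $1$.)

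I expect the main obstacle to be upgrading the conclusion ``$\varphi|_{E}$ is finite and birational onto $\Pi$'' to a genuine isomorphism $\Pi \simeq \mathbb{P}^{2}$: the degree computation by itself only shows that the normalization of $\Pi$ is $\mathbb{P}^{2}$, and one must rule out $\varphi|_{E}$ gluing points of $E$. This is exactly where the very-ampleness of $-K_{X}$ from Remark~\ref{remark:anti-canonical-embedding} is essential, since it forces the defining subsystem on $E$ to be the complete, point-separating system $|\mathcal{O}_{\mathbb{P}^{2}}(1)|$.
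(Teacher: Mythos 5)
Your proposal is correct and follows essentially the same route as the paper: the paper computes $K_{Y}^{2}\cdot E=1$ directly from $K_{Y}=f^{*}K_{Y'}+\frac{1}{2}E$ and $\mathcal{O}_{E}(E)\simeq\mathcal{O}_{\mathbb{P}^{2}}(-2)$, which is the same information as your adjunction computation $-K_{Y}|_{E}\simeq\mathcal{O}_{\mathbb{P}^{2}}(1)$, and then asserts the conclusion. Your write-up merely supplies the details the paper leaves implicit (finiteness of $\varphi|_{E}$ and the identification of the image via the very ample system $|\mathcal{O}_{\mathbb{P}^{2}}(1)|$ inside the anti-canonical embedding), and you are right that the sign in the paper's statement of $\mathcal{O}_{E}(E)$ should be read as $\mathcal{O}_{\mathbb{P}^{2}}(-2)$.
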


\begin{proof}
Since $E \simeq \mathbb{P}^{2}$ and $\mathcal{O}_{E}(E) \simeq \mathcal{O}_{\mathbb{P}^{2}}(2)$, we have
$$
K_{Y}^{2} \cdot E = \left(f^{*}K_{Y'} + \frac{1}{2}E\right)^{2} \cdot E = \frac{1}{4}\left(\mathcal{O}_{\mathbb{P}^{2}}(2)\right)^{2} = 1.
$$
This implies that $\varphi(E)$ is a surface $\Pi \simeq \mathbb{P}^{2}$ on $X$ with $K_{X}^{2} \cdot \Pi = 1$.
\end{proof}

\begin{lemma}
\label{theorem:1-contraction}
In the above notation, if $f(E)$ is a curve, then $Y'$ has only terminal factorial singularities and
one of the following holds:
\begin{itemize}
\item $Y'$ is a weak Fano threefold with $-K_{Y'}^{3} \geqslant -K_{Y}^{3}$;
\item For $C:=f(E)$ we have $K_{Y'} \cdot C > 0$ and $C$ is the only curve having positive intersection with $K_{Y'}$. In
this case $C \simeq \mathbb{P}^{1}$ and either $E \simeq \mathbb{P}^{1} \times \mathbb{P}^{1}$ or $\mathbb{F}_{1}$.
\end{itemize}
\end{lemma}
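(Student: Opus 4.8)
The plan is to read the structural part of the statement off Cutkosky's classification of extremal divisorial contractions of terminal factorial (Gorenstein) threefolds \cite{Cutkosky}, and then to extract the dichotomy and the numerical restrictions from the fact that $-K_{Y}$ is nef and big. First I would invoke \cite{Cutkosky} to record that, since the $K$-negative extremal contraction $f$ collapses the irreducible divisor $E$ onto the curve $C:=f(E)$ and $Y$ is terminal and factorial, the variety $Y'$ again has only terminal factorial singularities, $f$ is the blow-up of $Y'$ along the reduced curve $C$ (so $E$ is ruled over $C$ with a ruling fibre $F$ satisfying $E\cdot F=-1$), and the discrepancy is $1$, i.e. $K_{Y}=f^{*}K_{Y'}+E$. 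This gives $-K_{Y}\cdot F=1$ and, by the usual computation on the blow-up, the relations $E^{3}=2-2p_{a}(C)+K_{Y'}\cdot C$ and the degree formula $-K_{Y'}^{3}=-K_{Y}^{3}-2\,K_{Y'}\cdot C+2-2p_{a}(C)$, which I use throughout.

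The conceptual core is a dichotomy forced by nef-ness of $-K_{Y}$. For any irreducible curve $\Gamma\subset Y'$ with $\Gamma\ne C$ the strict transform $\tilde\Gamma$ is not contained in $E$, so $E\cdot\tilde\Gamma\geqslant 0$; as $f^{*}(-K_{Y'})=-K_{Y}+E$ and $f_{*}\tilde\Gamma=\Gamma$, this yields $-K_{Y'}\cdot\Gamma=-K_{Y}\cdot\tilde\Gamma+E\cdot\tilde\Gamma\geqslant 0$. Hence $C$ is the only curve on $Y'$ that can meet $-K_{Y'}$ negatively, and the two alternatives of the lemma correspond precisely to whether $-K_{Y'}\cdot C\geqslant 0$ or $-K_{Y'}\cdot C<0$.

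In the first case $-K_{Y'}$ is nef, and it is big since $f^{*}(-K_{Y'})=-K_{Y}+E$ is the sum of the big divisor $-K_{Y}$ and the effective divisor $E$; thus $Y'$ is a weak Fano threefold. To obtain $-K_{Y'}^{3}\geqslant -K_{Y}^{3}$ I would feed the nef inequality $(-K_{Y})^{2}\cdot E=(-K_{Y}|_{E})^{2}\geqslant 0$, which computes to $(-K_{Y})^{2}\cdot E=2-2p_{a}(C)-K_{Y'}\cdot C$, into the degree formula: combining $K_{Y'}\cdot C\leqslant 0$ with the bound $K_{Y'}\cdot C\leqslant 2-2p_{a}(C)$ shows $-K_{Y'}^{3}-(-K_{Y}^{3})=2\bigl(1-p_{a}(C)-K_{Y'}\cdot C\bigr)\geqslant 0$ in every case.

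In the second case $K_{Y'}\cdot C>0$, so by the dichotomy $C$ is the unique curve with positive $K_{Y'}$-degree. Here $(-K_{Y})^{2}\cdot E\geqslant 0$ and $K_{Y'}\cdot C\geqslant 1$ force $2p_{a}(C)\leqslant 2-K_{Y'}\cdot C\leqslant 1$, hence $p_{a}(C)=0$; since $E$, and therefore $C=f(E)$, is irreducible, $C\simeq\mathbb{P}^{1}$ and $E$ is a Hirzebruch surface $\mathbb{F}_{n}$. Writing $-K_{Y}|_{E}=C_{0}+bF$ (the coefficient of the section being $(-K_{Y}|_{E})\cdot F=1$) and using that $-K_{Y}|_{E}$ is an integral nef class, the relations $(-K_{Y}|_{E})^{2}=2-K_{Y'}\cdot C$ and $(-K_{Y}|_{E})\cdot C_{0}\geqslant 0$ give $b=1+(n-K_{Y'}\cdot C)/2\geqslant n$ and $n\equiv K_{Y'}\cdot C\pmod 2$; for $K_{Y'}\cdot C\in\{1,2\}$ this leaves exactly $(K_{Y'}\cdot C,n)=(1,1)$ and $(2,0)$, i.e. $E\simeq\mathbb{F}_{1}$ or $E\simeq\mathbb{P}^{1}\times\mathbb{P}^{1}$. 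The step I expect to be the real obstacle is the very first one: extracting from \cite{Cutkosky} that $Y'$ is genuinely factorial and that $E\to C$ is a ruled surface with $E\cdot F=-1$ and the stated value of $E^{3}$ even when $Y'$ may be singular at points of $C$, so that all the intersection numbers above are legitimate; once the contraction is identified as this kind of blow-up, the remaining deductions are the elementary nef and integrality computations on $E$ sketched here.
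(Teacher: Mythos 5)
Your proposal is correct and follows essentially the same route as the paper: Cutkosky's classification identifies $f$ as the blow-up of a curve on a terminal factorial $Y'$, and the dichotomy together with the numerical restrictions forcing $E \simeq \mathbb{F}_{1}$ or $\mathbb{P}^{1}\times\mathbb{P}^{1}$ are then read off from the nefness of $-K_{Y}$ via the standard blow-up intersection formulas, exactly as in the paper's sketch. The only (harmless) divergence is that you obtain $p_{a}(C)=0$ directly from $(-K_{Y})^{2}\cdot E \geqslant 0$ combined with $K_{Y'}\cdot C \geqslant 1$, where the paper instead cites Lemmas 4.2--4.3 of \cite{Prokhorov-degree} and Corollary 1.3 of \cite{Mori-flip}; your version is self-contained and equally valid.
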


\begin{proof}[Sketch of the proof]
We use the arguments from the proof of Proposition-definition 4.5
in \cite{Prokhorov-degree}. According to \cite{Cutkosky} threefold
$Y'$ is non-singular near $C$ and $f$ is the blow up of $C$. This
in particular implies that $Y'$ has only terminal factorial
singularities.

We have $K_{Y}=f^{*}K_{Y'}+E$ and
$$
K_{Y}^{3}=K_{Y'}^{3}+3f^{*}K_{Y'} \cdot E^{2} + E^{3}.
$$

If $Y'$ is a weak Fano threefold, then
$$
0 \leqslant \left(-K_{Y}\right)^{2} \cdot E = 2f^{*}K_{Y'} \cdot E^{2} + E^{3}, \qquad
0 \leqslant \left(-K_{Y}\right)\cdot\left(-f^{*}K_{Y'}\right)\cdot E = f^{*}K_{Y'} \cdot E^{2}.
$$
This gives the inequality $-K_{Y'}^{3} \geqslant -K_{Y}^{3}$.

Now, if $Y'$ is not a weak Fano threefold, then $K_{Y'} \cdot Z > 0$ for some irreducible curve $Z$. It is easy
to see that $Z = C$. Then from Lemmas 4.2 and 4.3 in \cite{Prokhorov-degree} and Corollary 1.3 in \cite{Mori-flip} we get
$C \simeq \mathbb{P}^{1}$.

Thus, $E \simeq \mathbb{F}_{n}$ for some $n \geqslant 0$. Since $-K_{Y}|_{E}$ is a section of the fibration
$E \longrightarrow C$, we can write
$$
-K_{Y}|_{E} \sim h + (n+a)l,
$$
where $h$ is the negative section and $l$ is a fibre. We have $a \geqslant 0$ because $-K_{Y}|_{E}$ is nef.

Thus, we obtain
$$
0 > -K_{Y'} \cdot C = K_{Y}^{2} \cdot E - 2 + 2p_{a}(C) = (-K_{Y}|_{E})^{2} - 2 = n + 2a - 2,
$$
which implies that $a = 0$ and $n \leqslant 1$.
\end{proof}

\begin{corollary}
\label{theorem:1-cor-contraction}
In the assumptions of Lemma~\ref{theorem:1-contraction},
\begin{itemize}
\item If $E \simeq \mathbb{P}^{1} \times \mathbb{P}^{1}$, then $\varphi(E)$ is a line on $X$ (a curve $\Gamma\simeq\mathbb{P}^{1}$
such that $-K_{X} \cdot \Gamma = 1$) and $X$ is singular along $\varphi(E)$;
\item If $E \simeq \mathbb{F}_{1}$, then $\varphi(E)$ is a plane on $X$ (a surface $\Pi \simeq \mathbb{P}^{2}$ such that
$K_{X}^{2} \cdot \Pi = 1$).
\end{itemize}
\end{corollary}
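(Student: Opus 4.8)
The plan is to read off the structure of $\varphi|_E$ from the description of $-K_Y|_E$ obtained in Lemma~\ref{theorem:1-contraction}, using the fact (Remark~\ref{remark:K-trivial-contraction-2}) that $\varphi$ is the contraction of the face on which $-K_Y$ vanishes, so that $\varphi$ collapses precisely the curves $C\subset Y$ with $-K_Y\cdot C=0$. Restricting to $E\simeq\mathbb{F}_n$ with $n\in\{0,1\}$, the $\varphi$-contracted curves in $E$ are exactly those $C$ with $(-K_Y|_E)\cdot C=0$. Since $-K_Y|_E\sim h+nl$ (the equality $a=0$ in the proof of Lemma~\ref{theorem:1-contraction}), I would compute these intersection numbers in the standard basis $h,l$ of $\mathbb{F}_n$, normalised by $h^2=-n$, $l^2=0$, $h\cdot l=1$.

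First I would treat the case $E\simeq\mathbb{P}^1\times\mathbb{P}^1$ (so $n=0$ and $-K_Y|_E\sim h$). For a class $\alpha h+\beta l$ one gets $(-K_Y|_E)\cdot(\alpha h+\beta l)=\beta$, so the contracted curves are exactly the sections of $E\to C$ (class $h$), which sweep out a one-parameter family. Hence $\varphi$ collapses this ruling and $\varphi(E)=\Gamma\simeq\mathbb{P}^1$ is a curve, while each fibre $l$ meets every contracted section once and therefore maps isomorphically onto $\Gamma$, giving $\varphi_*l=\Gamma$. The projection formula together with $K_Y=\varphi^*K_X$ then yields
$$-K_X\cdot\Gamma=-K_Y\cdot l=(-K_Y|_E)\cdot l=h\cdot l=1,$$
so $\Gamma$ is a line. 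For the singularity assertion I would argue by discrepancies: $E$ is a $\varphi$-exceptional divisor of discrepancy $0$ over $X$ with centre $\Gamma$; if $X$ were smooth at the generic point of $\Gamma$, that point would be a terminal (indeed smooth) point of $X$ and every exceptional divisor centred there would have strictly positive discrepancy, contradicting $a(E,X)=0$. Hence $X$ is singular along $\Gamma$.

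Next I would handle $E\simeq\mathbb{F}_1$ (so $n=1$ and $-K_Y|_E\sim h+l$). Here $(-K_Y|_E)\cdot h=(h+l)\cdot h=0$ while $(-K_Y|_E)\cdot l=1$, so among the irreducible curves of $E$ only the negative section $h$ is $\varphi$-contracted. Thus $\varphi|_E\colon\mathbb{F}_1\to\Pi$ contracts exactly the $(-1)$-curve and is an isomorphism elsewhere, i.e. it is the blow-down onto $\Pi\simeq\mathbb{P}^2$. As $\varphi|_E$ is birational, $\varphi_*E=\Pi$, and the projection formula with $K_Y=\varphi^*K_X$ gives
$$K_X^2\cdot\Pi=K_Y^2\cdot E=(-K_Y|_E)^2=(h+l)^2=1,$$
so $\Pi$ is a plane in the required sense.

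The numerical parts are routine intersection theory on Hirzebruch surfaces, and the identification of the quotients ($\mathbb{P}^1\times\mathbb{P}^1$ collapsing one ruling to a curve, $\mathbb{F}_1$ blowing down its $(-1)$-curve to $\mathbb{P}^2$) follows once one knows which curves lie in the $\varphi$-contracted face. I expect the only genuinely non-formal point to be the singularity claim in the first case: one must invoke that a crepant divisorial contraction with one-dimensional fibres cannot occur over the smooth, hence terminal, locus of $X$. Getting that discrepancy argument precise is the main obstacle; everything else is bookkeeping of the two rulings of $E$.
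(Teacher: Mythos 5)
Your proof is correct and follows essentially the same route as the paper: both compute $(-K_Y|_E)\cdot h$ and $(-K_Y|_E)\cdot l$ from the formula $-K_Y|_E\sim h+nl$ of Lemma~\ref{theorem:1-contraction} to identify which curves $\varphi$ contracts, and then use $K_Y=\varphi^*K_X$ and the projection formula for the degree computations. The one point where you go beyond the paper is the singularity assertion in the $\mathbb{P}^1\times\mathbb{P}^1$ case, which the paper's proof leaves implicit; your discrepancy argument (a crepant divisor with one-dimensional centre cannot lie over the smooth locus of $X$) is a correct and welcome way to close that gap.
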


\begin{proof}
In the notation from the proof of Lemma~\ref{theorem:1-contraction}, if $E \simeq \mathbb{P}^{1} \times \mathbb{P}^{1}$,
then $-K_{Y} \cdot h = 0$ and $-K_{Y} \cdot l = 1$. This implies that $\varphi(E)$ is a curve $\Gamma\simeq\mathbb{P}^{1}$
on $X$ with $-K_{X} \cdot \Gamma = 1$.

If $E \simeq \mathbb{F}_{1}$, then $-K_{Y} \cdot h = 0$ and $h$ is the only curve on $E$ having zero intersection with $-K_{Y}$.
This implies that $\varphi(E)$ is a surface $\Pi \simeq \mathbb{P}^{2}$ on $X$ with
$K_{X}^{2} \cdot \Pi = K_{Y}^{2} \cdot E = 1$.
\end{proof}

Finally, recall that a normal three-dimensional singularity $o \in V$ is called a \emph{$\mathrm{cDV}$ singularity}
if it is analytically isomorphic to a hypersurface singularity $f(x,y,z) + tg(x,y,z) = 0$ in $\mathbb{C}^{4}$ with
coordinates $x$, $y$, $z$, $t$, where $f(x,y,z)=0$ is an equation of a Du Val singularity.

\begin{proposition}[\cite{Reid-canonical-threefolds}]
\label{theorem:Reid-CDV}
If $o \in V$ is a $\mathrm{cDV}$ point, then the discrepancy of every prime divisor with center at $o$ is strictly positive.
\end{proposition}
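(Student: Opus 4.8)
The plan is to reduce the three-dimensional statement to the two-dimensional Du Val case by cutting with a general hyperplane section through $o$ and comparing discrepancies via adjunction. Since the assertion is local and analytic, I would first pass to the analytic germ and write $o \in V$ as the hypersurface $\{f(x,y,z) + tg(x,y,z) = 0\} \subset \mathbb{C}^{4}$. By the very definition of a $\mathrm{cDV}$ point a general hyperplane section $o \in S$ is a Du Val singularity, in particular a canonical Gorenstein surface singularity; consequently every exceptional divisor of every resolution of $S$ has discrepancy $\geqslant 0$.

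Next I would fix a prime divisor $E$ with $\mathrm{center}_{V}(E) = \{o\}$ and realize it on a birational model. Choose a birational morphism $\pi: W \longrightarrow V$ with $W$ smooth on which $E$ appears and which simultaneously gives an embedded resolution of a general section $S$; let $T \subset W$ be the strict transform of $S$, so that $\pi|_{T} : T \longrightarrow S$ is a resolution. Writing
\begin{equation*}
K_{W} = \pi^{*}K_{V} + \sum_{i} a_{i} E_{i}, \qquad \pi^{*}S = T + \sum_{i} m_{i} E_{i},
\end{equation*}
with $a_{i} = a(E_{i}, V)$ and $m_{i} \geqslant 0$ the multiplicity of $E_{i}$ in $\pi^{*}S$, adjunction for the Cartier divisor $S$ gives
\begin{equation*}
K_{T} = \left(K_{W} + T\right)\big|_{T} = \left(\pi|_{T}\right)^{*}K_{S} + \sum_{i} \left(a_{i} - m_{i}\right) E_{i}\big|_{T}.
\end{equation*}
Since $S$ is canonical, each $a_{i} - m_{i}$ that arises as the discrepancy over $S$ of a $\pi|_{T}$-exceptional curve $E_{i}|_{T}$ is $\geqslant 0$.

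It remains to read off the conclusion for $E$. For general $S$ the restriction $E|_{T}$ is a nonempty reduced $\pi|_{T}$-exceptional curve on $T$ contracted to the point $o \in S$, so the inequality above applied to $i = E$ yields $a(E, V) - m_{E} \geqslant 0$, that is $a(E, V) \geqslant m_{E}$. Finally, near $o$ the divisor $S$ is cut out by a function in the maximal ideal $\mathfrak{m}_{o}$, and $\mathrm{center}_{V}(E) = \{o\}$ forces $v_{E}$ to be positive on $\mathfrak{m}_{o}$; as $v_{E}$ is $\mathbb{Z}$-valued this gives $m_{E} \geqslant 1$. Hence $a(E, V) \geqslant 1 > 0$, which proves the proposition (Gorensteinness makes discrepancies integral, so ``strictly positive'' means precisely $\geqslant 1$).

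The step I expect to be the main obstacle is the Bertini-type bookkeeping for the general section $S$: one must guarantee that $T$ is smooth and that $\pi|_{T}$ is genuinely a resolution of $S$, that $E|_{T}$ is a reduced, nonempty, $\pi|_{T}$-exceptional curve mapping to $o$ rather than vanishing or acquiring multiplicity, and that the displayed adjunction identity is interpreted correctly component by component along the possibly reducible exceptional locus. These facts follow from choosing $S$ sufficiently general together with the transversality of $T$ to the exceptional divisors, but it is here that the analytic-local generality of $o \in V$ must be used with care.
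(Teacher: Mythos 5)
First, a remark on the comparison itself: the paper gives no proof of this Proposition --- it is imported verbatim from Reid's \emph{Canonical 3-folds} --- so your attempt can only be measured against the standard argument in the literature. You have correctly identified its strategy: cut by a general hyperplane section $S$ through $o$, use that $o\in S$ is Du Val (hence canonical), transfer discrepancies by adjunction, and win by one because $v_{E}(\mathfrak{m}_{o})\geqslant 1$. The concluding bookkeeping $a(E,V)\geqslant m_{E}\geqslant 1$, and the reduction of ``strictly positive'' to ``$\geqslant 1$'' via integrality of discrepancies in the Gorenstein case, are fine.

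The genuine gap is the assertion that ``for general $S$ the restriction $E|_{T}$ is a nonempty reduced $\pi|_{T}$-exceptional curve.'' Nonemptiness is precisely what fails, and it is the entire difficulty of the statement: what you need is an instance of \emph{inversion} of adjunction, not of adjunction. Concretely, let $o\in V$ be the ordinary double point $x^{2}+y^{2}+z^{2}+t^{2}=0$ (a $\mathrm{cA}_{1}$ point). Blowing up $o$ gives an exceptional quadric $Q$ with discrepancy $1$, and the strict transform of a general hyperplane section $S$ meets $Q$ in a general hyperplane section of $Q$, a conic moving in a base-point-free family; a general member avoids any fixed point $p\in Q$. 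Blowing up such a $p$ produces a divisor $E_{2}$ with center $\{o\}$ on $V$ which is \emph{disjoint} from the strict transform $T$ of a general $S$, on a model $W$ satisfying all of your requirements ($W$ smooth, $T$ smooth, $T+\sum E_{i}$ simple normal crossing). Then $E_{2}|_{T}=0$ and your adjunction identity says nothing about $a(E_{2},V)$. So as written the argument proves the inequality only for those divisors whose support on the chosen model happens to meet $T$, whereas the Proposition concerns \emph{every} prime divisor with center at $o$. Closing this gap needs a genuinely new ingredient --- the Shokurov--Koll\'ar connectedness theorem, or Reid's own more delicate analysis, or at the very least the observation that once canonicity of $V$ is established every divisor not already living on a fixed resolution has discrepancy $\geqslant 1$ automatically, leaving finitely many candidates that must still be reached --- and it is not the Bertini-type transversality bookkeeping you single out as the main obstacle. (That secondary issue, namely that the coefficient of a prime curve $F\subset T$ in $\sum_{i}(a_{i}-m_{i})E_{i}|_{T}$ is a sum over all $i$ with $F\subset E_{i}$ and so does not isolate $a_{E}-m_{E}$, is real but is cured simply by the simple normal crossing hypothesis, which forces each such $F$ to lie on exactly one $E_{i}$ with multiplicity one.)
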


\begin{corollary}
\label{theorem:isolated-CDV-implies-terminal}
If $o \in V$ is an isolated $\mathrm{cDV}$ point, then it is terminal.
\end{corollary}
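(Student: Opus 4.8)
The plan is to deduce terminality directly from Reid's Proposition~\ref{theorem:Reid-CDV} together with the hypothesis that the singular point is isolated. First I would record that a $\mathrm{cDV}$ singularity is, by definition, a hypersurface singularity in $\mathbb{C}^{4}$, so $V$ is Gorenstein near $o$ and $K_{V}$ is Cartier. In particular $K_{V}$ is $\mathbb{Q}$-Cartier, so the discrepancies of divisors over $V$ are well defined and the notion of terminality makes sense at $o$.

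Next, since terminality is a local condition, I would replace $V$ by a small neighbourhood $U$ of $o$ in which $o$ is the only singular point; this is exactly where the isolatedness hypothesis enters. Choosing any resolution $f : W \longrightarrow U$ and writing $K_{W} = f^{*}K_{U} + \sum_{i} a_{i} E_{i}$ with the $E_{i}$ the $f$-exceptional prime divisors, I would observe that $f$ is an isomorphism over $U \setminus \{o\}$ because $U$ is smooth there. Consequently $f(E_{i}) \subseteq \{o\}$, so every exceptional divisor $E_{i}$ has center equal to the point $o$.

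Now I would simply invoke Proposition~\ref{theorem:Reid-CDV}: each $E_{i}$ is a prime divisor with center at the $\mathrm{cDV}$ point $o$, hence its discrepancy $a_{i}$ is strictly positive. Since this holds for every exceptional divisor of the chosen resolution, all the $a_{i}$ are positive, which is precisely the statement that $o \in V$ is a terminal singularity.

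The argument is short, and the one point to watch is the step forcing every exceptional divisor to be centered at $o$: this is where isolatedness is indispensable. If $o$ were not isolated—say the $\mathrm{cDV}$ locus were a curve, as with the transverse $\frac{1}{2}(1,1)$ points occurring along $L_{2}$ in Example~\ref{example:examp-2}—then a resolution could produce exceptional divisors whose center is a curve rather than the point $o$, and Reid's positivity statement would not apply to them (indeed such points are canonical but fail to be terminal, since blowing up the singular curve is crepant). Thus the main, and essentially only, obstacle is not a computation but the correct bookkeeping of centers, guaranteed here by the isolated hypothesis.
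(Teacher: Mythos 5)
Your proof is correct and is exactly the intended derivation: the paper states this as an immediate consequence of Proposition~\ref{theorem:Reid-CDV} without writing out a proof, and your argument (localize so that $o$ is the only singular point, note that every exceptional divisor of a resolution then has center $\{o\}$, and apply Reid's positivity of discrepancies) is the evident route. Your closing remark correctly identifies where isolatedness is indispensable, consistent with the non-terminal $\mathrm{cA_{1}}$ curves appearing in Example~\ref{example:examp-2}.
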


\begin{proposition}[{\cite[Corollary 2.10]{Reid-canonical-threefolds}}]
\label{theorem:inductive-CDV}
Suppose that $V$ is projective. Then
$o \in V$ is a $\mathrm{cDV}$ point if and only if for general
hyperplane section $H$ through $o$ singularity $o \in H$ is Du
Val.
\end{proposition}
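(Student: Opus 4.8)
The plan is to prove both implications locally in the analytic category, working with the defining equation of $V$ near $o$. Recall that a Du Val singularity is precisely a simple (ADE) hypersurface double point, so it has multiplicity $2$ and embedding dimension $3$. First I would choose analytic coordinates $x,y,z,t$ on the Zariski tangent space $T_{o}V$ and observe that a general hyperplane through $o$ is cut out by a general linear form; after discarding the (non-general) case in which the coefficient of $t$ vanishes, such a section is $\{t=\ell(x,y,z)\}$ for a general linear form $\ell$.

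For the implication ``general section Du Val $\Rightarrow$ cDV'', suppose $o\in H\cap V$ is Du Val for general $H\ni o$. Since a general hyperplane through $o$ drops the dimension of the Zariski tangent space by exactly one and preserves the multiplicity at $o$, I get $\dim T_{o}V=\dim T_{o}(H\cap V)+1=4$ and $\operatorname{mult}_{o}V=\operatorname{mult}_{o}(H\cap V)=2$. Hence $V=\{F=0\}\subset\mathbb{C}^{4}$ is a hypersurface double point. Taking the general Du Val section to be $\{t=0\}$, the restriction $F|_{t=0}$ defines a Du Val singularity, which is exactly the condition that $o\in V$ be $\mathrm{cDV}$.

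The substantial direction is ``cDV $\Rightarrow$ general section Du Val''. Writing $V=\{f(x,y,z)+tg(x,y,z)=0\}$ with $\{f=0\}$ Du Val, the general section becomes the surface $S_{\ell}=\{f+\ell g=0\}\subset\mathbb{C}^{3}$, which I want to show is Du Val at $o$. I would first note that for general $\ell$ the hyperplane $\{t=\ell\}$ is transverse to the tangent directions of the (at most one-dimensional) singular locus of $V$ at $o$, so $S_{\ell}$ has an \emph{isolated} double point at $o$ and local singularity theory applies. Then I would consider the whole family $\{S_{\ell}\}$ over the affine space of linear forms, whose special member at $\ell=0$ is exactly the Du Val surface $\{f=0\}$. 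The analytic type of the singularity at $o$ is constructible in $\ell$, hence constant on a dense open set $U$; choosing points of $U$ arbitrarily close to $0$, the general $S_{\ell}$ appears as a fibre of the versal deformation of the simple singularity $\{f=0\}$. Since the versal deformation of a simple (Du Val) singularity contains only simple (Du Val) or smooth fibres, $S_{\ell}$ is Du Val (or smooth) at $o$, as required.

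The main obstacle is precisely this last step: one must rule out that a general section degenerates to a \emph{non-Du Val} double point. Here a naive semicontinuity of the Milnor number is insufficient, since it does not separate, for instance, $E_{8}$ from the simple elliptic singularity $\widetilde{E}_{8}$, which share the same Milnor number. The correct input is the adjacency structure of the ADE singularities: Du Val singularities are exactly the simple singularities, and no non-simple singularity occurs in the versal deformation of a simple one. Passing from the honestly local (small $\ell$) deformation statement to a general $\ell$ over the whole family, via constructibility of the singularity type, is the point that requires care.
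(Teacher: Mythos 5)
The paper itself offers no proof of Proposition~\ref{theorem:inductive-CDV}: it is imported verbatim from Reid's \emph{Canonical 3-folds} (Corollary 2.10 there), so there is no in-paper argument to measure yours against and I assess it on its own terms. Your ``if'' direction is correct: a general hyperplane through $o$ cuts $\dim T_{o}V$ down by exactly one and preserves $\mathrm{mult}_{o}$, so a Du Val general section forces $\dim T_{o}V=4$ and $\mathrm{mult}_{o}V=2$, i.e.\ a hypersurface double point $\{F=0\}\subset\mathbb{C}^{4}$ with $F|_{t=0}$ Du Val, which is the definition of $\mathrm{cDV}$ (with the standard convention $F=f(x,y,z)+tg(x,y,z,t)$). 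In the ``only if'' direction you also isolate the right substantive input: every fibre of the semiuniversal deformation of an ADE germ is again ADE or smooth, so the sections $\{t=\ell\}$ with $\ell$ \emph{small} are Du Val at $o$ (they cannot be smooth there, since $\mathrm{mult}_{o}(H\cap V)\geqslant\mathrm{mult}_{o}V=2$), and you rightly note that Milnor-number semicontinuity alone would not suffice.

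The gap is the bridge from ``all sufficiently small $\ell$'' to ``general $\ell$'', which you reduce to ``constructibility of the analytic type'' without proof, and which as formulated is not correct: the analytic type of an isolated hypersurface surface singularity is not a constructible function into a finite set (non-simple germs have continuous moduli), so ``constructible, hence constant on a dense open set'' does not parse as stated. What you actually need, and what is true, is that the locus $U$ of hyperplanes $H\ni o$ such that $(H\cap V,o)$ is Du Val or smooth is a constructible subset of the irreducible parameter space of hyperplanes through $o$; since your deformation argument shows that $U$ contains a Euclidean neighbourhood of the distinguished section $\{t=0\}$, constructibility then forces $U$ to contain a dense Zariski-open subset, which is exactly the assertion about the general section. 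Establishing that constructibility is genuine work: one can invoke generic local topological triviality of algebraic families (Varchenko--Verdier) together with the fact that being Du Val is a topological property of a normal surface germ (it is read off from the dual graph of the minimal resolution), or stratify the base so as to have simultaneous log resolutions along the section and use that Du Val $=$ canonical for surface germs, so that discrepancies over $o$ are constant on strata. Some such argument must be supplied; as written, the step you yourself flag as ``the point that requires care'' is asserted rather than proved.
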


\section{Beginning of the proof of Theorem ~\ref{theorem:main}: preliminary results}
\label{section:begin}

We use the notation and assumptions from Section~\ref{section:basic}. Let $X$ be a Fano threefold satisfying the conditions
of Theorem~\ref{theorem:main} and $Y$ be its terminal $\mathbb{Q}$-factorial modification.

Fix a $K$-negative extremal contraction $f: Y \longrightarrow Y'$. By Proposition~\ref{theorem:non-birational-contraction}
and \cite[(2.3.2)]{Mori-flip} morphism $f$ is birational with exceptional divisor $E$.

Throughout this section we will assume that $Y'$ has terminal factorial singularities and $-K_{Y'}$ is nef.
According to Lemmas~\ref{theorem:0-contraction} and \ref{theorem:1-contraction} this in particular implies that $Y'$ is a
weak Fano threefold.

By Remark~\ref{remark:K-trivial-contraction-1} $Y'$ is a terminal $\mathbb{Q}$-factorial modification
of some Fano threefold $X'$ with canonical Gorenstein singularities. Let us denote by $\psi: Y' \longrightarrow X'$ the
corresponding anti-canonical morphism.

\begin{lemma}
\label{theorem:contraction-to-point}
In the above notation, if $f(E)$ is a point and $-K_{Y'}^{3}=72$, then $-K_{Y}^{3}\notin\{66,68,70\}$.
\end{lemma}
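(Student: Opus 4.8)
The plan is to identify $Y'$ completely and then simply read off the type of $f$ and the resulting degree drop. First, since $\psi$ is crepant we have $K_{Y'} = \psi^{*}K_{X'}$, hence $-K_{X'}^{3} = -K_{Y'}^{3} = 72$, so $X'$ is a Fano threefold with canonical Gorenstein singularities of degree $72$. By Theorem~\ref{theorem:Prokhorov-degree} this forces $X' \simeq \mathbb{P}(3,1,1,1)$ or $X' \simeq \mathbb{P}(6,4,1,1)$. Now $Y'$ is a weak Fano threefold with terminal factorial (hence $\mathbb{Q}$-factorial) singularities together with the crepant contraction $\psi \colon Y' \longrightarrow X'$, so it is a terminal $\mathbb{Q}$-factorial modification of $X'$. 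By Remarks~\ref{remark:unique-terminal-modification-1} and \ref{remark:unique-terminal-modification-2} such a modification is unique up to isomorphism and coincides with the explicit one built in Example~\ref{example:examp-1} or Example~\ref{example:examp-2}. In both of these the modification is non-singular, so I conclude that $Y'$ is in fact a \emph{smooth} threefold.

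With $Y'$ smooth I would next pin down $f$. By hypothesis $f$ is a divisorial $K$-negative extremal contraction whose exceptional divisor $E$ is sent to the point $P := f(E)$, which is now a smooth point of $Y'$. By the classification of extremal contractions on terminal factorial threefolds in \cite{Cutkosky}, the only divisorial extremal contraction taking a divisor to a smooth point is the blow up of that point; therefore $E \simeq \mathbb{P}^{2}$, $\mathcal{O}_{E}(E) \simeq \mathcal{O}_{\mathbb{P}^{2}}(-1)$ and $K_{Y} = f^{*}K_{Y'} + 2E$.

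A short intersection computation then closes the argument. Expanding $K_{Y}^{3} = (f^{*}K_{Y'} + 2E)^{3}$ and using the projection-formula vanishings $(f^{*}K_{Y'})^{2}\cdot E = f^{*}K_{Y'}\cdot E^{2} = 0$ (both factors restrict trivially to the fibre $E$ over the point $P$) together with $E^{3} = (\mathcal{O}_{\mathbb{P}^{2}}(-1))^{2} = 1$, I obtain $-K_{Y}^{3} = -K_{Y'}^{3} - 8 = 64$. Since $64 \notin \{66,68,70\}$, the lemma follows.

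The only genuinely substantive step is the reduction to a smooth $Y'$; everything after it is routine. This reduction is where I expect the main work to lie, since it combines Prokhorov's sharp degree bound (Theorem~\ref{theorem:Prokhorov-degree}) with the uniqueness of terminal $\mathbb{Q}$-factorial modifications for the two extremal weighted projective spaces (Remarks~\ref{remark:unique-terminal-modification-1} and \ref{remark:unique-terminal-modification-2}). The one subtlety to verify carefully is that $\psi$ genuinely exhibits $Y'$ as a terminal $\mathbb{Q}$-factorial modification of $X'$, i.e.\ that $\psi$ is crepant and $Y'$ is $\mathbb{Q}$-factorial with terminal singularities — which is precisely what the standing hypotheses of this section (and Remark~\ref{remark:K-trivial-contraction-1}) supply. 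Once $Y'$ is known to be smooth, the identification of $f$ with a point blow up and the degree computation are immediate.
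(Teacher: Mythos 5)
Your proof is correct and follows essentially the same route as the paper: both arguments use Theorem~\ref{theorem:Prokhorov-degree} together with Remarks~\ref{remark:unique-terminal-modification-1} and \ref{remark:unique-terminal-modification-2} to conclude that $Y'$ is non-singular, then invoke \cite{Cutkosky} to identify $f$ as the blow up of the point $f(E)$, giving $K_{Y}=f^{*}K_{Y'}+2E$, $E^{3}=1$ and $-K_{Y}^{3}=72-8=64$. The extra care you take in checking that $\psi$ exhibits $Y'$ as a terminal $\mathbb{Q}$-factorial modification of $X'$ is exactly what the paper's standing hypotheses supply, so nothing is missing.
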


\begin{proof}
By Theorem~\ref{theorem:Prokhorov-degree} threefold $Y'$ is a
terminal $\mathbb{Q}$-factorial modification either of $\mathbb{P}(3,1,1,1)$ or of $\mathbb{P}(6,4,1,1)$.
Then by Remarks~\ref{remark:unique-terminal-modification-1} and \ref{remark:unique-terminal-modification-2} $Y'$
is non-singular.

According to \cite{Cutkosky} morphism $f$ is the blow up of the maximal ideal of the point $f(E) \in Y'$. Since $Y'$ is non-singular, this
in particular implies that
$K_{Y}=f^{*}K_{Y'}+2E$ and $E^{3}=1$. Then  we get
$$
-K_{Y}^{3}=72-8E^{3}=64.
$$
\end{proof}

\begin{proposition}
\label{theorem:contraction-to-curve} In the above notation, if
$f(E)$ is a curve and $-K_{Y'}^{3}=72$, then $-K_{Y}^{3}=70$ and
$X$ is the image under birational projection of the
anti-canonically embedded Fano threefold $\mathbb{P}(6,4,1,1)
\subset \mathbb{P}^{38}$ from a singular point on
$\mathbb{P}(6,4,1,1)$ of type $\mathrm{cDV}$.
\end{proposition}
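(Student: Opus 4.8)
The plan is to pin down the curve $C := f(E)$ together with its numerical invariants, and then to realise the anti-canonical model of $Y$ as a linear projection of $X'$. Since $-K_{Y'}$ is nef we are in the first alternative of Lemma~\ref{theorem:1-contraction}: $Y'$ is a weak Fano threefold with terminal factorial singularities and $f$ is the blow up of the smooth curve $C$. As $-K_{Y'}^{3} = 72$, Theorem~\ref{theorem:Prokhorov-degree} shows that the anti-canonical model $X'$ of $Y'$ is $\mathbb{P}(3,1,1,1)$ or $\mathbb{P}(6,4,1,1)$, and Remarks~\ref{remark:unique-terminal-modification-1} and \ref{remark:unique-terminal-modification-2} then force $Y'$ to be the \emph{nonsingular} model of Example~\ref{example:examp-1} or Example~\ref{example:examp-2}, with $\psi : Y' \to X'$ a crepant resolution. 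Because $Y'$ is smooth, the blow up of a smooth curve obeys $-K_{Y}^{3} = -K_{Y'}^{3} - 2(-K_{Y'}\cdot C) + 2p_{a}(C) - 2$. Writing $d := -K_{Y'}\cdot C \geq 0$ and $g := p_{a}(C)$, this reads $-K_{Y}^{3} = 70 - 2(d-g)$, so the hypothesis $-K_{Y}^{3} \in \{66,68,70\}$ is equivalent to $d - g \in \{0,1,2\}$; the necessary nefness inequality from the proof of Lemma~\ref{theorem:1-contraction} gives in addition $(-K_{Y})^{2}\cdot E = d + 2 - 2g \geq 0$, i.e. $d \geq 2g - 2$.

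The heart of the argument is to use nefness of $-K_{Y} = f^{*}(-K_{Y'}) - E$ to locate $C$ relative to the exceptional locus of $\psi$. The key observation is that if $\Gamma' \subset Y'$ is contracted by $\psi$ (so $-K_{Y'}\cdot\Gamma' = 0$) and meets $C$, then by the projection formula its strict transform $\tilde\Gamma$ satisfies $-K_{Y}\cdot\tilde\Gamma = -K_{Y'}\cdot\Gamma' - (E\cdot\tilde\Gamma) = -(E\cdot\tilde\Gamma) < 0$, contradicting nefness. Equivalently, the restriction of $-K_{Y}$ to the strict transform of each $\psi$-exceptional divisor must be nef, and I would run this restriction component by component to confine $C$.

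For $X' = \mathbb{P}(3,1,1,1)$ the exceptional locus of $\psi$ is a single $E_{0} \simeq \mathbb{P}^{2}$ with $-K_{Y'}|_{E_{0}} = 0$; since $f^{*}(-K_{Y'})$ vanishes on the strict transform $\tilde E_{0}$, one gets $-K_{Y}|_{\tilde E_{0}} = -(E\cap\tilde E_{0})$, which fails to be nef unless $C$ is disjoint from $E_{0}$, and then treating $C$ as a multisection of $Y' \to \mathbb{P}^{2}$ (so $-K_{Y}\cdot\tilde F = 2 - \#(C\cap F) \geq 0$ on every fibre $F$) together with $d-g\in\{0,1,2\}$ and $d \geq 2g-2$ eliminates this case. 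For $X' = \mathbb{P}(6,4,1,1)$ the same restriction forces $C$ to avoid the divisors over the non-$\mathrm{cDV}$ points $P,Q$; the remaining options are that $C$ is disjoint from the ruled surface $D$ resolving the $\mathrm{cDV}$ locus $L\setminus\{P,Q\}$ (excluded as in the previous case) or that $C$ is a fibre of $D \to L$ over a $\mathrm{cDV}$ point $O$. Only the latter survives. Such a $C$ is contracted by $\psi$, so $d = 0$, and $C \simeq \mathbb{P}^{1}$, so $g = 0$; hence $d = g$ and $-K_{Y}^{3} = 70$, the values $68,66$ (i.e. $d-g = 1,2$) being discarded along the way. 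One checks directly that $N_{C/Y'} \simeq \mathcal{O}\oplus\mathcal{O}(-2)$, $E \simeq \mathbb{F}_{2}$ and $-K_{Y}|_{E} \sim C_{0} + 2l$ is nef, confirming that $-K_{Y}$ is nef and big.

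It remains to identify $X$. As $\psi(C) = O$ is a point, $|-K_{Y}| = |f^{*}(-K_{Y'}) - E|$ is exactly the strict transform of the members of $|-K_{Y'}| = \psi^{*}|-K_{X'}|$ passing through $O$; thus the anti-canonical morphism of $Y$ realises the linear projection of the anti-canonically embedded $\mathbb{P}(6,4,1,1) \subset \mathbb{P}^{38}$ from $O$, and $X$ is its image. The degree drop $72 - 70 = 2$ records that $\mathrm{mult}_{O}\mathbb{P}(6,4,1,1) = 2$, consistent with $O \in L\setminus\{P,Q\}$ being a $\mathrm{cDV}$ (compound $A_{1}$) point; the exceptional section $C_{0} \subset E$ with $-K_{Y}\cdot C_{0} = 0$ produces the non-$\mathrm{cDV}$ point of $X$. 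The hard part will be the middle step: the nefness bookkeeping that simultaneously eliminates the $\mathbb{P}(3,1,1,1)$ case and the intermediate degrees $66$ and $68$, since the numerical constraints $d-g\in\{0,1,2\}$ and $d \geq 2g-2$ leave several candidate pairs $(d,g)$ that must each be excluded by geometry rather than by the degree formula alone.
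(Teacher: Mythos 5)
Your endgame is the same as the paper's: nefness of $-K_{Y}=f^{*}(-K_{Y'})-E$ forces $C$ to avoid every $\psi$-exceptional curve it does not contain, which (in the $\mathbb{P}(6,4,1,1)$ case) leaves only $C=\psi^{-1}(O)$ for a $\mathrm{cA}_{1}$ point $O\in L\setminus\{P,Q\}$, whence $d=g=0$, $-K_{Y}^{3}=70$, and $X$ is the projection of $\mathbb{P}(6,4,1,1)\subset\mathbb{P}^{38}$ from $O$ (the paper makes this last identification precise via the blow up $\sigma:W\to X'$ of $\mathfrak{m}_{O}$ and the diagram relating $\eta$, $f$, $\sigma$, $\psi$). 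But the proof is not complete: the two cases you must still kill --- $X'=\mathbb{P}(3,1,1,1)$ with $C$ disjoint from $E_{0}$, and $X'=\mathbb{P}(6,4,1,1)$ with $C$ disjoint from all of $E_{\psi}$ --- are exactly the ones you defer to ``nefness bookkeeping'' and acknowledge as ``the hard part.'' The fibrewise condition you propose ($-K_{Y}\cdot\tilde F=2-\#(C\cap F)\geqslant 0$) only constrains fibres over the image of $C$ and leaves a large supply of candidate curves (e.g.\ curves near the positive section of $\mathbb{P}(\mathcal{O}\oplus\mathcal{O}(3))$, or curves in $\mathbb{P}(6,4,1,1)$ away from its singular locus) for which the pair $(d,g)$ is not controlled by that local inequality; nothing in your write-up shows that $d-g\geqslant 3$ for all of them.

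The paper closes this gap by a global projective argument that you do not use at all. Since $-K_{Y}=f^{*}(-K_{Y'})-E$ and $E_{\psi}\cap C=\emptyset$, the birational map $X'\dashrightarrow X$ is given by the linear system $|-K_{X'}-\psi(C)|$ of hyperplane sections of the anti-canonically embedded $X'\subset\mathbb{P}^{38}$ containing $\psi(C)$. Because $X=X_{2g-2}\subset\mathbb{P}^{g+1}$ with $g+1\leqslant 37$, this system has dimension $\geqslant 35$, so $\psi(C)$ spans a linear subspace $V$ with $\dim V\leqslant 2$; as $X'$ is an intersection of quadrics (Proposition~\ref{theorem:free-antican-system-2}), this gives $-K_{X'}\cdot\psi(C)\leqslant 2$, hence $\mathcal{O}_{X'}(1)\cdot\psi(C)\leqslant\frac{1}{3}$ (resp.\ $\frac{1}{6}$) $<1$, which forces $\psi(C)$ to meet $\mathrm{Sing}(X')$ --- contradicting the disjointness from $E_{\psi}$ that nefness already gave you. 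This one argument eliminates $\mathbb{P}(3,1,1,1)$ entirely, rules out $\psi(C)$ being a curve in the $\mathbb{P}(6,4,1,1)$ case, and thereby discards the degrees $66$ and $68$; without it (or a worked-out substitute) your case analysis does not terminate. Note also that the full statement includes Proposition~\ref{theorem:singularities-of-X} territory (the non-$\mathrm{cDV}$ nature and uniqueness of the singular point), which the paper proves separately via Lemmas~\ref{theorem:line} and \ref{theorem:unique-line}; your closing remark about the section $C_{0}\subset E$ gestures at this but is not a proof.
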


\begin{proof}
By Theorem~\ref{theorem:Prokhorov-degree} threefold $Y'$ is a terminal
$\mathbb{Q}$-factorial modification either of $\mathbb{P}(3,1,1,1)$ or of $\mathbb{P}(6,4,1,1)$.
In particular, either $X' = \mathbb{P}(3,1,1,1)$ or $\mathbb{P}(6,4,1,1)$ (see Remark~\ref{remark:K-trivial-contraction-2}).

By Remarks~\ref{remark:unique-terminal-modification-1} and  \ref{remark:unique-terminal-modification-2} threefold $Y'$ is
isomorphic to that constructed either in Example~\ref{example:examp-1} or in Example~\ref{example:examp-2}.

Set $C:=f(E)$. According to \cite{Cutkosky} morphism $f$ is the blow up of $C$.

\begin{lemma}
\label{theorem:simple-case}
In the above notation, we have $X' \ne \mathbb{P}(3,1,1,1)$.
\end{lemma}

\begin{proof}
Suppose that $X'=\mathbb{P}(3,1,1,1)$. According to Example~\ref{example:examp-1} the $\psi$-exceptional locus $E_{\psi}$
is an irreducible divisor which is contracted to the singular point on $X'$.

We have $E_{\psi} \cap C =\emptyset$. Indeed, otherwise, since $\psi(E_{\psi})$ is a point, we can find a curve
$Z \subset Y$ such that $K_{Y'} \cdot f_{*}Z=0$ and $E \cdot Z > 0$. Then from the equality
$$
K_{Y}=f^{*}K_{Y'}+E
$$
we get $K_{Y} \cdot Z > 0$. This is impossible because $-K_{Y}$ is nef.

Let us consider the following commutative diagram:

$$
\xymatrix{
Y \ar@{->}[d]_{\varphi}\ar@{->}[r]^{f}&Y'\ar@{->}[d]^{\psi}\\
X&\ar@{-->}[l]_{f'}X'}
$$

Morphism $\varphi$ is given by the linear system
$|-K_{Y}|=|f^{*}(-K_{Y'})-E|$. Hence the map $\varphi \circ
f^{-1}$ is given by the linear system  $|-K_{Y'}-C|$. Since
$E_{\psi} \cap C =\emptyset$, the map $f'$ is given by the linear
system $|-K_{X'}-\psi(C)|$. According to
Propositions~\ref{theorem:non-free-antican-system} and
\ref{theorem:free-antican-system-1} threefold $X'$ is
anti-canonically embedded in $\mathbb{P}^{38}$. This implies that
the curve $\psi(C)$ is cut out on $X'$ by some linear subspace $V
\subset \mathbb{P}^{38}$.

On the other hand, $X = X_{2g-2} \subset \mathbb{P}^{g+1}$ where $g \in \{34, 35, 36\}$ (see Remark~\ref{remark:anti-canonical-embedding}).
Thus, $\dim V \le 2$ and
$-K_{X'} \cdot \psi(C) \leqslant 2$ because $X'$ is an intersection of quadrics
(see Proposition~\ref{theorem:free-antican-system-2}).

For $X'=\mathbb{P}(3,1,1,1)$ we have $-K_{X'}\sim\mathcal{O}_{X'}(6)$ (see \cite{Iano-Fletcher}). Hence
$$
0 < \mathcal{O}_{X'}(1)\cdot \psi(C) \leqslant \frac{1}{3}.
$$
This implies that the curve $\psi(C)$ passes through the singular point on $X'$ which is impossible because
$E_{\psi} \cap C = \emptyset$.
\end{proof}

Thus, $Y'$ is a terminal $\mathbb{Q}$-factorial modification of $X' = \mathbb{P}(6,4,1,1)$. Let $P$ and $Q$ be two
non-$\mathrm{cDV}$ points on $X'$ introduced in Example~\ref{example:examp-2}.

\begin{lemma}
\label{theorem:exceptionality}
In the above notation, $\psi(C)$ is a point distinct from $P$ and $Q$.
\end{lemma}

\begin{proof}
Suppose that $\psi(C)$ is a curve. Then for the $\psi$-exceptional locus $E_{\psi}$
we have $E_{\psi} \cap C = \emptyset$. Indeed, otherwise we can find a curve $Z \subset Y$ such that $K_{Y'} \cdot f_{*}Z=0$ and $E \cdot Z > 0$. Then from the equality $$K_{Y}=f^{*}K_{Y'}+E$$ we get $K_{Y} \cdot Z > 0$. This is impossible because $-K_{Y}$ is nef.

Now repeating word by word the arguments from the proof of Lemma~\ref{theorem:simple-case} we  obtain that $-K_{X'} \cdot \psi(C) \leqslant 2$.
For $X'=\mathbb{P}(6,4,1,1)$ we have $-K_{X'}\sim\mathcal{O}_{X'}(12)$ (see \cite{Iano-Fletcher}).
Hence
$$
0 < \mathcal{O}_{X'}(1)\cdot \psi(C) \leqslant \frac{1}{6}.
$$
This implies that the curve $\psi(C)$ passes through the singular point on $X'$ which is impossible because $E_{\psi} \cap C = \emptyset$. The obtained contradiction shows that $\psi(C)$ is a point.

Now if $\psi(C)$ is either $P$ or $Q$, then we can find a curve $Z \subset Y$ such that $K_{Y'} \cdot f_{*}Z=0$ and $E \cdot Z > 0$. From the equality $$K_{Y}=f^{*}K_{Y'}+E$$
we get $K_{Y} \cdot Z > 0$. This is impossible because $-K_{Y}$ is nef.
\end{proof}

\begin{remark}
\label{remark:smooth-rational-curve}
Set $O:=\psi(C)$. According to Example~\ref{example:examp-2} $O \in X'$ is a point of type $\mathrm{cA_{1}}$. Then
Lemma~\ref{theorem:exceptionality} in particular implies that $C = \psi^{-1}(O)$ is an irreducible rational curve.
\end{remark}

According to Propositions~\ref{theorem:non-free-antican-system}
and
\ref{theorem:free-antican-system-1} threefold $X'$ is anti-canonically embedded in $\mathbb{P}^{38}$. In what follows, we consider $X'$ with respect to
this embedding.
In the notation of Remark~\ref{remark:smooth-rational-curve}, let us denote by $\pi$ the projection of the threefold $X'$ from the point $O$.

\begin{lemma}
\label{theorem:non-simple-case}
$\pi(X')$ is a Fano threefold with canonical Gorenstein  singularities and degree   $70$.
\end{lemma}

\begin{proof}
Let $\sigma: W \longrightarrow X'$ be the blow up of the maximal ideal of the point $O$. We get the following
commutative diagram:

$$
\xymatrix{
&&W\ar@{->}[ld]_{\sigma}\ar@{->}[rd]^{\tau}&&\\%
&X'\ar@{-->}[rr]_{\pi}&&\pi(X')&}
$$

The projection $\pi$ is given by the linear system $\mathcal{H} \subset |-K_{X'}|$ of all hyperplane sections of $X'$
passing through $O$.
Since $O \in X'$ is a $\mathrm{cA_{1}}$ point (see Remark~\ref{remark:smooth-rational-curve}),
for general $H \in \mathcal{H}$ we have
$$
\sigma_{*}^{-1}H=\sigma^{*}(H)-E_{\sigma},
$$
where $E_{\sigma}$ is the $\sigma$-exceptional divisor. On the other hand, by the adjunction formula we have
$$
K_{W}=\sigma^{*}K_{X'}+E_{\sigma}.
$$

Thus, morphism $\tau: W \longrightarrow \pi(X')$ is given by the linear system $\sigma_{*}^{-1}\mathcal{H} \subseteq |-K_{W}|$. In particular, $W$ is a weak Fano threefold and $\pi(X')$ is a Fano threefold. Moreover, by construction singularities of $W$ are canonical and Gorenstein.
Then, since morphism $\tau$ is crepant, by \cite{Kawamata} singularities of $\pi(X')$ are also
canonical and Gorenstein.

Finally, since $-K_{X'}^{3}=72$ and $\mathrm{mult}_{O}(X')=2$, the degree of $\pi(X')$
equals $70$.
\end{proof}

\begin{lemma}
\label{theorem:isomorphism-to-projection}
In the above notation, we have $X=\pi(X')$.
\end{lemma}

\begin{proof}
We use the notation from the proof of Lemma~\ref{theorem:non-simple-case}.
Let $L \simeq \mathbb{P}^{1}$ be the singular locus of $X'$ (see Example~\ref{example:examp-2}).
The morphism $\psi$ factors through the blow up of the curve $L$.
Then by Lemma~\ref{theorem:exceptionality}, Remark~\ref{remark:smooth-rational-curve} and the property of blow ups we
obtain the following commutative diagram:

$$
\xymatrix{
Y \ar@{->}[d]_{\eta}\ar@{->}[r]^{f}&Y'\ar@{->}[d]^{\psi}\\
W\ar@{->}[r]_{\sigma}&X'}
$$
where $\eta$ is the contraction of the proper transform of the $\psi$-exceptional divisor $E_{\psi}$. In particular, we have
$K_{Y} = \eta^{*}K_{W}$.

Since $\pi(X')$ is the anti-canonical image of $W$, this implies that
$Y$
is a terminal $\mathbb{Q}$-factorial modification of $\pi(X')$. Hence $X = \pi(X')$ by
Remark~\ref{remark:K-trivial-contraction-2}.
\end{proof}

Proposition~\ref{theorem:contraction-to-curve} is completely proved.
\end{proof}

\begin{proposition}
\label{theorem:singularities-of-X} In the assumptions of
Proposition~\ref{theorem:contraction-to-curve}, threefold $X$ has
only one singular point which is non-$\mathrm{cDV}$.
\end{proposition}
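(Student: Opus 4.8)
The plan is to work with the resolution $W\xrightarrow{\sigma}X'$ and $W\xrightarrow{\tau}X=\pi(X')$ constructed in the proof of Lemma~\ref{theorem:non-simple-case}, together with the fact (Remark~\ref{remark:anti-canonical-embedding}) that $X'=\mathbb{P}(6,4,1,1)\subset\mathbb{P}^{38}$ is anticanonically embedded as an intersection of quadrics. The first step is to show that the singular locus $L\simeq\mathbb{P}^{1}$ of $X'$ is a \emph{line} in this embedding. Indeed, restricting $\mathcal{O}_{X'}(1)$ to $L=\{x_{2}=x_{3}=0\}$ one checks that $L$ is carried isomorphically onto a line $\bar L\subset\mathbb{P}^{38}$, so that $-K_{X'}\cdot L=1$. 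Since the centre $O$ of the projection lies on $L$ (Remark~\ref{remark:smooth-rational-curve}), every point of $\bar L$ is collinear with $O$, and hence $\pi$ contracts $L$ to a single point $O_{X}:=\pi(L)\in X$.

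Next I would prove that $\mathrm{Sing}(X)=\{O_{X}\}$, i.e.\ that $\pi$ restricts to an isomorphism $X'\setminus L\to X\setminus O_{X}$. Because $X'$ is cut out by quadrics, any line meeting $X'$ in a scheme of length $\geqslant 3$ must lie on $X'$; consequently the only way two distinct points of $X'\setminus\{O\}$ can be identified by $\pi$, or a smooth point of $X'$ can fail to be a point of immersion of $\pi$, is to lie on a line $\ell\subset X'$ through $O$. The crux is therefore to verify that $L$ is the \emph{only} line on $X'$ through $O$, and I expect this to be the main obstacle, since it is precisely what forces the singularities of $X$ to collapse to a single point. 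This follows from an index count: every line $\ell\subset X'$ satisfies $\mathcal{O}_{X'}(1)\cdot\ell=\tfrac{1}{12}$, whereas if $\ell\neq L$ then $\ell$ meets $L$ only at the index-$2$ point $O$ (two distinct lines meet in at most one point, and $O\neq P,Q$), so $\ell$ lies in the smooth locus of $X'$ away from $O$ and hence $\mathcal{O}_{X'}(1)\cdot\ell\in\tfrac{1}{2}\mathbb{Z}$; as $\tfrac{1}{12}\notin\tfrac{1}{2}\mathbb{Z}$, no such $\ell$ can exist. Thus $\pi$ is an isomorphism over $X\setminus O_{X}$, and $X$ is smooth there.

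Finally I would show that $O_{X}$ is non-$\mathrm{cDV}$. By Lemma~\ref{theorem:isomorphism-to-projection} the smooth weak Fano $Y$ is a terminal $\mathbb{Q}$-factorial modification of $X$ and $\varphi\colon Y\to X$ is crepant; since by the previous step $\mathrm{Sing}(X)=\{O_{X}\}$ is a single point, the whole exceptional locus of $\varphi$ is mapped to $O_{X}$. Now the points $P,Q\in L\subset X'$ are canonical but \emph{not} terminal (for $\tfrac{1}{6}(4,1,1)$ and $\tfrac{1}{4}(2,1,1)$ the Reid--Tai sums equal $1$), and blowing up the $\mathrm{cA_{1}}$ point $O\neq P,Q$ leaves them untouched on $W$, so they survive as non-terminal points whose crepant resolution in $Y$ produces divisors of discrepancy $0$ over $O_{X}$. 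Hence $\varphi$ contracts a divisor to $O_{X}$ with discrepancy $0$, and Proposition~\ref{theorem:Reid-CDV} then shows that $O_{X}$ cannot be a $\mathrm{cDV}$ point. In particular $O_{X}$ is a genuine, non-$\mathrm{cDV}$ singularity, and $X$ has exactly one singular point, as asserted.
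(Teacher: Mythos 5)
Your core reduction and your key lemma are correct and, in fact, simpler than the paper's. You reduce uniqueness of the singular point to the non-existence of a second line on $X'=\mathbb{P}(6,4,1,1)$ through $O$, and you rule such a line out by an index count: $\mathcal{O}_{X'}(1)\cdot\ell=\tfrac{1}{12}$, while a line meeting $\mathrm{Sing}(X')=L$ only at the index-$2$ point $O$ would have $\mathcal{O}_{X'}(1)\cdot\ell\in\tfrac{1}{2}\mathbb{Z}$. This replaces the paper's Lemma~\ref{theorem:unique-line}, which instead sweeps a hypothetical second line into a one-parameter family via the torus action and derives a contradiction from the classification of the resulting extremal contraction; your argument is shorter and avoids that machinery. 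Likewise your proof that $O_{X}$ is non-$\mathrm{cDV}$ (a crepant exceptional divisor centred at $O_{X}$ has discrepancy $0$, so Proposition~\ref{theorem:Reid-CDV} applies) is valid and differs from the paper's, which notes that an isolated $\mathrm{cDV}$ point would be terminal and then contradicts Theorem~\ref{theorem:Namikawa-smoothing} using $-K_{X}^{3}=70$. (Your first step, that $L$ is a line, is asserted rather than proved, but it is exactly the paper's Lemma~\ref{theorem:line} and is true.)

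There is, however, a genuine gap in your second step. The claim ``$\pi$ restricts to an isomorphism $X'\setminus L\to X\setminus O_{X}$'' is false. Since $O$ is a double point of $X'$, the projection is resolved by the blow-up $\sigma\colon W\to X'$ of $O$, and the exceptional divisor $E_{\sigma}$ is \emph{not} contracted by $\tau$: indeed $(-K_{W})^{2}\cdot E_{\sigma}=E_{\sigma}^{3}=2>0$. Hence $X$ contains the two-dimensional surface $\tau(E_{\sigma})$, whose points (other than $O_{X}$) do not lie in the image of $X'\setminus\{O\}$ under $\pi$. Your secant/tangent-line argument therefore only proves smoothness of $X$ on $\pi(X'\setminus L)$, i.e.\ away from $\{O_{X}\}\cup\tau(E_{\sigma})$, and says nothing about the points of $\tau(E_{\sigma})\setminus\{O_{X}\}$. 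To close the gap you must additionally check that $W$ is smooth along $E_{\sigma}$ outside the proper transform $L'$ of $L$ (a local computation: blowing up the origin of $\{xy=z^{2}\}\times\mathbb{C}$ yields a threefold singular exactly along the proper transform of the singular line) and that $\tau$ is an isomorphism there. This is precisely what the paper's framework delivers: smoothness of $W\setminus L'$ follows from the crepant morphism $\eta\colon Y\to W$ with $Y$ non-singular and $\eta$-exceptional locus lying over $L'\cup\{P,Q\}$, and then $X\setminus\{o\}\simeq W\setminus L'$ because $\tau$ contracts only $L'$.
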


\begin{proof}
In the notation of Example~\ref{example:examp-2}, let $L$ be the singular locus of $\mathbb{P} = \mathbb{P}(6,4,1,1)$.

\begin{lemma}
\label{theorem:line}
In the above notation, the curve $L$ is a line (i.e. $L \simeq \mathbb{P}^{1}$ and $-K_{\mathbb{P}} \cdot L = 1$).
\end{lemma}

\begin{proof}
The curve $L$ is given by the equations $x_{2}=x_{3}=0$ where
$x_{0}$, $x_{1}$, $x_{2}$, $x_{3}$ are weighted projective
coordinates on $\mathbb{P}$ of weights $6$, $4$, $1$, $1$,
respectively (see \cite[5.15]{Iano-Fletcher}). This in particular
implies that $L \simeq \mathbb{P}^{1}$. It remains to show that
$-K_{\mathbb{P}} \cdot L = 1$.

Let $S$ be the surface in $\mathbb{P}$ with equation $x_{3}=0$.
Then $L \subset S$ and
$$
-K_{\mathbb{P}} \cdot  L=(-K_{\mathbb{P}}|_{S} \cdot L).
$$
The last intersection is taken on  $S = \mathbb{P}(6,4,1) \simeq
\mathbb{P}(3,2,1)$ (see \cite[5.7]{Iano-Fletcher}).

Since $-K_{\mathbb{P}}\sim\mathcal{O}_{\mathbb{P}}(12)$,
general element $D \in |-K_{\mathbb{P}}|$ is given by the equation
$$
a_{0}x_{0}^{2}+a_{6}(x_{2}, x_{3})x_{0}+a_{2}(x_{2}, x_{3})x_{0}x_{1}+a_{4}(x_{2}, x_{3})x_{1}^{2}+a_{8}(x_{2}, x_{3})x_{1}+
a_{12}(x_{2}, x_{3})=0,
$$
where $a_{i}(x_{2}, x_{3})$ are homogeneous polynomials of degree $i$ on the variables $x_{2}$, $x_{3}$.

If $y_{0}$, $y_{1}$, $y_{2}$ are weighted projective coordinates on
$S$ of weights $3$, $2$, $1$, respectively, then $D|_{S}$ is given
by the equation (see \cite[5.10]{Iano-Fletcher})
$$b_{0}y_{0}^{2}+b_{1}y_{0}y_{3}^{3}+
b_{2}y_{0}y_{1}y_{2}+ b_{3}y_{1}^{3}+b_{4}y_{1}^{2}y_{2}^{2}+b_{5}y_{1}y_{2}^{4}+b_{6}y_{2}^{6}=0
$$
on $S$, where $b_{i}\in\mathbb{C}$. This implies that $D|_{S} \sim \mathcal{O}_{S}(6)$. Thus, we have
$$
-K_{\mathbb{P}}\cdot L = (-K_{\mathbb{P}}|_{S} \cdot L) = (\mathcal{O}_{S}(6) \cdot \mathcal{O}_{S}(1)) =1
$$
for the last intersection on $S=\mathbb{P}(3,2,1)$ (see \cite{Dolgachev}).
\end{proof}

Let $P$ and $Q$ be two non-$\mathrm{cDV}$ points on $\mathbb{P} = \mathbb{P}(6,4,1,1)$ introduced in Example~\ref{example:examp-2}.
\begin{lemma}
\label{theorem:unique-line}
In the above notation, $L$ is the unique line on $\mathbb{P}$ passing through singular points on
$\mathbb{P}$ distinct from $P$ and $Q$.
\end{lemma}

\begin{proof}
Let $L_{0} \ne L$ be a line on $\mathbb{P}$ intersecting the line $L$ at some point other than $P$ and $Q$.

In the notation from the proof of Lemma~\ref{theorem:line}, let
$\tau_{\lambda_{1}, \lambda_{2}}$,
$\lambda_{i}\in\mathbb{C}\setminus\{0\}$, be the automorphism of
$\mathbb{P}$ given by the following formula:
$$
\tau_{\lambda_{1}, \lambda_{2}}: [x_{0}:x_{1}:x_{2}:x_{3}] \mapsto [\lambda_{1}x_{0}:\lambda_{2}x_{1}:x_{2}:x_{3}].
$$
The group generated by $\tau_{\lambda_{1}, \lambda_{2}}$ acts transitively
on $L \setminus \{P, Q\}$ (see \cite{Iano-Fletcher}, 5.15).

Apply
automorphisms $\tau_{\lambda_{1}, \lambda_{2}}$ to $L_{0}$.
We obtain a family of lines $\{L_{t}\}$, $t \in L\setminus\{P, Q\}$, on $\mathbb{P}$ such that
each line $L_{t}$ intersects $L$ at some point $P_{t}$  with $P_{t} \ne P_{t'}$ for $t \ne t'$.

Let $T$ be the terminal $\mathbb{Q}$-factorial modification of $\mathbb{P}$ and
$\phi: T \longrightarrow \mathbb{P}$ the corresponding anti-canonical morphism. Let $L'_{t}$ be the proper transform of line $L_{t}$ on $T$. We have $-K_{T} \cdot L'_{t} = 1$.
Moreover, set $E_{P}:=\phi^{-1}(P)$, $E_{Q}:=\phi^{-1}(Q)$ and $E_{L}:=\phi^{-1}(L)$. Then
$$
E_{P}\cdot L'_{t}=E_{Q}\cdot L'_{t}=0\qquad\mbox{and}\qquad
E_{L} \cdot L'_{t}=1
$$
on $T$. Since for the $\phi$-exceptional locus $E_{\phi}$ we have $E_{\phi} = E_{P}\cup E_{Q}\cup E_{L}$ (see Example~\ref{example:examp-2}),
the obtained equalities imply that the curves in the family $\{L'_{t}\}$ are numerically equivalent.

Furthermore, from the equality $-K_{T} \cdot L'_{t} = 1$ and Proposition~\ref{theorem:extremal-rays} we deduce that the ray
$R: = \mathbb{R}_{+}[L'_{t}]$ determines an extremal contraction $\varphi_{R}: T \longrightarrow T'$.
By Propositions 4.11 and 5.2 in \cite{Prokhorov-degree} and \cite[(2.3.2)]{Mori-flip}
the contraction $\varphi_{R}$ is birational and contracts the surface
$$
E_{R}: = \bigcup_{t \in L\setminus\{P, Q\}} L'_{t}
$$
to some curve.

If $T'$ is a weak Fano threefold, then by Lemma~\ref{theorem:1-contraction} we have
$-K_{T'}^{3} \geqslant -K_{T}^{3} = 72$. Thus, by Remark~\ref{remark:K-trivial-contraction-1} and
Theorem~\ref{theorem:Prokhorov-degree} threefold $T'$ is a terminal $\mathbb{Q}$-factorial modification either of
$\mathbb{P}(3,1,1,1)$ or of $\mathbb{P}(6,4,1,1)$.
By Remarks~\ref{remark:unique-terminal-modification-1} and \ref{remark:unique-terminal-modification-2} either
$T' \simeq T$ or $T'$ is that constructed in  Example~\ref{example:examp-1}. This implies that
$\rho(T') = \rho(T) - 1 = 4$ equals either $5$ or $2$, a contradiction.

Hence by Lemma~\ref{theorem:1-contraction} divisor $-K_{T'}$ is not nef and either
$E_{R} = \mathbb{F}_{1}$ or $\mathbb{P}^{1} \times \mathbb{P}^{1}$.
But according to
Corollary~\ref{theorem:1-cor-contraction} equality
$E_{R} = \mathbb{P}^{1} \times \mathbb{P}^{1}$ implies that $E_{R} \subset E_{\phi}$
which is not the case.

Thus, we get $E_{R} = \mathbb{F}_{1}$. But $E_{P} \cap E_{R}$, $E_{Q} \cap E_{R} \ne \emptyset$ and $\phi(E_{P})$, $\phi(E_{Q})$
are two distinct points on $\mathbb{P}$. This implies that there
are two distinct curves on $E_{R}$ having zero intersection with $K_{T}$.
But according to calculations in the proof of Lemma~\ref{theorem:1-contraction} the negative section on
$E_{R} = \mathbb{F}_{1}$ is the only curve on $E_{R}$ having this property.
The obtained contradiction completes the proof of Lemma~\ref{theorem:unique-line}.
\end{proof}

We use the notation from the proof of
Proposition~\ref{theorem:contraction-to-curve}. By
Lemmas~\ref{theorem:line} and \ref{theorem:unique-line} morphism
$\tau: W \longrightarrow X=\pi(X')$ contracts only the proper
transform $L'$ of the line $L$ to the unique singular point on
$X$. Set $o:=\tau(L')$.

\begin{lemma}
\label{theorem:non-CDV}
Singularity $o \in X$ is non-$\mathrm{cDV}$.
\end{lemma}

\begin{proof}
If the point $o \in X$ is $\mathrm{cDV}$, then $X$ has terminal Gorenstein singularities by Corollary~\ref{theorem:isolated-CDV-implies-terminal}. But this contradicts Theorem~\ref{theorem:Namikawa-smoothing} because $-K_{X}^{3}=70$.
\end{proof}

Proposition~\ref{theorem:singularities-of-X} is completely proved.
\end{proof}

\begin{remark}
\label{remark:rank-of-Pic}
From Proposition~\ref{theorem:singularities-of-X} it is not difficult to see that $\rho(X)=1$ for Fano threefold $X$ satisfying the conditions of
Proposition~\ref{theorem:contraction-to-curve}. We will denote this $X$ by $X_{70}$. Furthermore, in the
notation from the proof of Lemma~\ref{theorem:unique-line}, since the group of automorphisms $\tau_{\lambda_{1}, \lambda_{2}}$
acts transitively on $L \setminus \{P, Q\}$, from Lemmas~\ref{theorem:simple-case}--\ref{theorem:isomorphism-to-projection}
we deduce that the threefold $X_{70}$ is unique up to isomorphism.
\end{remark}

\begin{lemma}
\label{theorem:smooth-terminal-modification-C}
In the above notation, if $X = X_{70}$, then every terminal $\mathbb{Q}$-factorial modification of
$X$ is non-singular.
\end{lemma}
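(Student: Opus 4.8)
The plan is to exhibit one non-singular terminal $\mathbb{Q}$-factorial modification of $X=X_{70}$ and then to prove that every other one is isomorphic to it. First I would take the modification $Y$ already constructed in the proof of Proposition~\ref{theorem:contraction-to-curve}. By Lemma~\ref{theorem:isomorphism-to-projection} it sits in the commutative square in which $f\colon Y\to Y'$ is the blow up of the curve $C=\psi^{-1}(O)$, where $Y'$ is the \emph{unique} terminal $\mathbb{Q}$-factorial modification of $\mathbb{P}(6,4,1,1)$ and is therefore non-singular (Remark~\ref{remark:unique-terminal-modification-2}). Since $O$ is a $\mathrm{cA_{1}}$ point, the curve $C$ is a smooth rational curve (Remark~\ref{remark:smooth-rational-curve}) lying in the non-singular threefold $Y'$; hence $f$ is the blow up of a smooth curve on a smooth threefold, and $Y$ is non-singular.

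By Remark~\ref{remark:terminal-modifications-are-connected-by-flops} any terminal $\mathbb{Q}$-factorial modification $\widehat{Y}$ of $X$ is linked to $Y$ by a chain of flops over $X$, each flop being governed by a small $K$-trivial extremal contraction of the intermediate model. So it is enough to analyse such contractions on $Y$. Since $\varphi\colon Y\to X$ is crepant and, by Proposition~\ref{theorem:singularities-of-X}, $X$ has the single singular point $o$, the morphism $\varphi$ is an isomorphism over $X\setminus\{o\}$ and $\mathrm{Exc}(\varphi)=\varphi^{-1}(o)$. Using the factorization $\varphi=\tau\circ\eta$ I would describe $\varphi^{-1}(o)$ as the union of the proper transforms on $Y$ of the $\psi$-exceptional divisors $E_{L}$, $E_{P}$, $E_{Q}$; all of them are contracted to $o$, because $L$ is exactly the line through the projection centre $O$ (Lemma~\ref{theorem:line}) and $P,Q\in L$.

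The decisive and hardest step is to exclude a small $K$-trivial extremal contraction of $Y$ over $X$, in the spirit of Remarks~\ref{remark:unique-terminal-modification-1} and \ref{remark:unique-terminal-modification-2}. The difficulty is genuine here: on the intermediate weak Fano threefold $W$ the morphism $\tau\colon W\to X$ is the small contraction of the curve $L'$, so a priori $Y$ could carry a flopping curve. I would study the extremal rays of $\overline{NE}(Y/X)$ and show that every $K$-trivial extremal ray is generated by a curve that sweeps out one of the proper transforms of $E_{L}$, $E_{P}$, $E_{Q}$, so that its contraction is divisorial; equivalently, I would compute the normal bundles of the finitely many curves of $\varphi^{-1}(o)$ not moving inside these divisors and check that none of them spans a contractible small ray. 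Once it is shown that $Y$ admits no small $K$-trivial extremal contraction, there are no flops over $X$, so $\widehat{Y}\simeq Y$ for every terminal $\mathbb{Q}$-factorial modification of $X$, and all of them are non-singular.
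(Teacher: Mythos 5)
Your first paragraph reproduces the paper's argument exactly: $Y'$ is the unique terminal $\mathbb{Q}$-factorial modification of $\mathbb{P}(6,4,1,1)$ and is non-singular, $C=\psi^{-1}(O)$ is a smooth rational curve, so $f\colon Y\to Y'$ is the blow up of a smooth curve on a smooth threefold and $Y$ is a non-singular terminal $\mathbb{Q}$-factorial modification of $X_{70}$. The divergence, and the gap, is in how you pass from ``one modification is smooth'' to ``all of them are smooth.'' The paper does this in one line: by Remark~\ref{remark:terminal-modifications-are-connected-by-flops} any other modification $\widehat{Y}$ is obtained from $Y$ by a sequence of flops over $X$, and by Theorem 2.4 of \cite{Kollar-flops} a flop of a terminal threefold has the same analytic singularities as the original; in particular a flop of a non-singular threefold is non-singular. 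No control over whether flops actually occur is needed.

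You instead set out to prove the strictly stronger statement that $Y$ admits no small $K$-trivial extremal contraction over $X$, so that the terminal modification is unique. You correctly flag this as ``the decisive and hardest step,'' and then you do not carry it out: the normal-bundle computations and the analysis of $\overline{NE}(Y/X)$ are announced but not performed, so as written the proof is incomplete. Moreover this detour is both unnecessary and risky: the paper never claims uniqueness of the terminal modification of $X_{70}$ (contrast Remarks~\ref{remark:unique-terminal-modification-1} and \ref{remark:unique-terminal-modification-2}, where divisoriality of the exceptional locus is available and uniqueness is cheap), and your own observation that $\tau\colon W\to X$ contracts the curve $L'$ \emph{small}ly suggests that flopping curves over $o$ cannot be ruled out by soft arguments. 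The fix is simply to replace your entire third paragraph by the appeal to the invariance of analytic singularities under flops.
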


\begin{proof}
We use the notation from the proof of Proposition~\ref{theorem:contraction-to-curve}. We have shown that the morphism
$f: Y \longrightarrow Y'$ is the blow up of a non-singular curve on a non-singular threefold $Y'$. Thus, $X$ possess a non-singular terminal $\mathbb{Q}$-factorial modification.

Now from Theorem 2.4 in \cite{Kollar-flops} and Remark~\ref{remark:terminal-modifications-are-connected-by-flops} we
deduce that every terminal $\mathbb{Q}$-factorial modification of $X$ is non-singular.
\end{proof}

\begin{corollary}
\label{theorem:contraction-to-point-C}
In the above notation, if $f(E)$ is a point and $X' = X_{70}$, then $-K_{Y}^{3}\notin\{66,68,70\}$.
\end{corollary}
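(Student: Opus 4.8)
The plan is to reproduce, almost verbatim, the argument of Lemma~\ref{theorem:contraction-to-point}, feeding in the degree-$70$ threefold $X_{70}$ in place of the two degree-$72$ threefolds $\mathbb{P}(3,1,1,1)$ and $\mathbb{P}(6,4,1,1)$. First I would record the numerical input: since $\psi: Y' \longrightarrow X'$ is the crepant anti-canonical morphism of a terminal $\mathbb{Q}$-factorial modification of $X' = X_{70}$, we have $K_{Y'} = \psi^{*}K_{X'}$ and hence $-K_{Y'}^{3} = -K_{X'}^{3} = 70$.

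The crucial structural step is to pin down the local shape of the contraction $f$, and this is where Lemma~\ref{theorem:smooth-terminal-modification-C} does the work. Because $X' = X_{70}$, that lemma tells us that \emph{every} terminal $\mathbb{Q}$-factorial modification of $X'$ is non-singular; in particular $Y'$ is non-singular. With $Y'$ smooth and $f(E)$ a point, the classification of $K$-negative extremal contractions on terminal factorial threefolds \cite{Cutkosky} forces $f$ to be the blow up of the maximal ideal of the point $f(E) \in Y'$. Equivalently, of the two alternatives in Lemma~\ref{theorem:0-contraction} only the first can occur here, since the second would produce a point of type $\frac{1}{2}(1,1,1)$ on $Y'$, which is not even Gorenstein and so contradicts the smoothness just established. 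Thus $E \simeq \mathbb{P}^{2}$, and the blow-up formula gives $K_{Y} = f^{*}K_{Y'} + 2E$ together with $E^{3} = 1$.

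Finally I would expand $K_{Y}^{3}$: the cross terms $(f^{*}K_{Y'})^{2}\cdot E$ and $(f^{*}K_{Y'})\cdot E^{2}$ both vanish by the projection formula, as $f_{*}E = 0$, so that
$$
-K_{Y}^{3} = -K_{Y'}^{3} - 8E^{3} = 70 - 8 = 62.
$$
Since $62 \notin \{66,68,70\}$, the corollary follows. I expect no genuine obstacle in this argument; the entire computation is routine once $Y'$ is known to be smooth, so the single load-bearing ingredient is Lemma~\ref{theorem:smooth-terminal-modification-C}, which is exactly what licenses the clean blow-up model $K_{Y} = f^{*}K_{Y'} + 2E$, $E^{3}=1$.
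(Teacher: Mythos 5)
Your proposal is correct and follows exactly the paper's own proof: the paper likewise invokes Lemma~\ref{theorem:smooth-terminal-modification-C} to conclude that $Y'$ is non-singular and then repeats the computation of Lemma~\ref{theorem:contraction-to-point} (blow up of a smooth point, $K_{Y}=f^{*}K_{Y'}+2E$, $E^{3}=1$) to get $-K_{Y}^{3}=70-8=62$. Your extra remarks ruling out the $\frac{1}{2}(1,1,1)$ alternative and justifying the vanishing of the cross terms are consistent with, and slightly more explicit than, the paper's one-line argument.
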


\begin{proof}
By Lemma~\ref{theorem:smooth-terminal-modification-C} threefold $Y'$ is non-singular. Then
repeating word by word the arguments from the proof of Lemma~\ref{theorem:contraction-to-point} we obtain that
$-K_{Y}^{3}=62$.
\end{proof}

\begin{corollary}
\label{theorem:contraction-to-curve-C}
In the above notation, if $f(E)$ is a curve and $X' = X_{70}$, then $-K_{Y}^{3}\notin\{66,68,70\}$.
\end{corollary}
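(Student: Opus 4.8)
The plan is to follow the pattern of Lemma~\ref{theorem:contraction-to-point} and Proposition~\ref{theorem:contraction-to-curve}, now with $-K_{Y'}^{3}=70$ in place of $72$. By Lemma~\ref{theorem:smooth-terminal-modification-C} the threefold $Y'$ is non-singular, and by \cite{Cutkosky} the morphism $f$ is the blow up of the smooth curve $C:=f(E)$. Hence the blow-up formula on a smooth threefold gives
\begin{equation*}
-K_{Y}^{3}=-K_{Y'}^{3}-2\bigl(-K_{Y'}\cdot C\bigr)+2p_{a}(C)-2=68-2\bigl(-K_{Y'}\cdot C\bigr)+2p_{a}(C),
\end{equation*}
and in particular $-K_{Y}^{3}\leqslant -K_{Y'}^{3}=70$ in accordance with Lemma~\ref{theorem:1-contraction}. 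Let $\psi:Y'\longrightarrow X'=X_{70}$ be the anti-canonical morphism and let $o\in X_{70}$ be its unique singular point, which is non-$\mathrm{cDV}$ by Proposition~\ref{theorem:singularities-of-X}.

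First I would analyse $\Gamma:=\psi(C)$. Since $o$ is non-$\mathrm{cDV}$, Proposition~\ref{theorem:Reid-CDV} shows that $\psi$ extracts a crepant divisor over $o$, so the fibre $E_{\psi}=\psi^{-1}(o)$ is a (possibly reducible) surface. I claim $\Gamma$ is a curve disjoint from $o$. Indeed, if $C\subset E_{\psi}$ (i.e. $\Gamma=o$), or if $\Gamma$ is a curve through $o$ so that $C\cap E_{\psi}\neq\emptyset$, then since $E_{\psi}$ is two-dimensional one finds a curve $\Gamma_{0}\subset E_{\psi}$, $\Gamma_{0}\neq C$, meeting $C$; as $\Gamma_{0}$ is contracted by $\psi$ its proper transform $Z$ satisfies $K_{Y'}\cdot f_{*}Z=0$ and $E\cdot Z>0$, whence $K_{Y}\cdot Z>0$, contradicting the nefness of $-K_{Y}$ exactly as in Lemma~\ref{theorem:exceptionality}. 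Thus $\Gamma\subset X_{70}\setminus\{o\}$, $\psi$ induces an isomorphism $C\cong\Gamma$, and $C\cap E_{\psi}=\emptyset$.

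Next I would repeat the projection argument of Lemma~\ref{theorem:simple-case}: the map $\varphi\circ f^{-1}$ descends to the projection of $X_{70}$ from the linear span $\langle\Gamma\rangle$, given by $|-K_{X_{70}}-\Gamma|$. Since $-K_{X_{70}}^{3}=70$ embeds $X_{70}$ in $\mathbb{P}^{37}$ as an intersection of quadrics (Remark~\ref{remark:anti-canonical-embedding}, Proposition~\ref{theorem:free-antican-system-2}), while $X=X_{2g-2}\subset\mathbb{P}^{g+1}$ with $g\in\{34,35,36\}$, the centre of projection has dimension $\dim\langle\Gamma\rangle=35-g$. As $\Gamma$ is a curve this forces $g=34$ and $\dim\langle\Gamma\rangle=1$, so $\Gamma$ lies in a line of $\mathbb{P}^{37}$, hence is a line ($-K_{X_{70}}\cdot\Gamma=1$, $p_{a}(\Gamma)=0$) and $-K_{Y}^{3}=66$; any $\Gamma$ of larger degree spans a higher-dimensional centre, giving $g\leqslant 33$, i.e. $-K_{Y}^{3}\leqslant 64\notin\{66,68,70\}$. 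It remains to exclude the line case, and this is the main obstacle.

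To finish I would rule out the existence of such a line. Using the description of $X_{70}$ as the projection of $\mathbb{P}(6,4,1,1)$ from a $\mathrm{cA_{1}}$ point $O$ (Proposition~\ref{theorem:contraction-to-curve}), I would transport $\Gamma$ back to a curve $\widetilde{\Gamma}\subset\mathbb{P}(6,4,1,1)$ with $-K_{\mathbb{P}(6,4,1,1)}\cdot\widetilde{\Gamma}=1+\mathrm{mult}_{O}\widetilde{\Gamma}$. Because $\mathcal{O}_{\mathbb{P}(6,4,1,1)}(1)$ is Cartier away from the singular locus, every curve of small anti-canonical degree must pass through the singular points; by Lemma~\ref{theorem:unique-line} the only line through the general singular points is the singular line $L$ of $\mathbb{P}(6,4,1,1)$, and $L$ is contracted to $o$ under the projection, so its image cannot be a line disjoint from $o$. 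A short case analysis of curves through $P$, $Q$ and through the centre $O$ should eliminate the remaining candidates, showing that no line $\Gamma\subset X_{70}\setminus\{o\}$ arises; hence $-K_{Y}^{3}=66$ is impossible and $-K_{Y}^{3}\notin\{66,68,70\}$. The delicate point—and the step I expect to require the most care—is precisely this classification of low-degree curves on $X_{70}$; a useful cross-check is that blowing up such a line would in any case force its normal bundle to be $\mathcal{O}\oplus\mathcal{O}(-1)$ for $-K_{Y}$ to remain nef (so that $E\simeq\mathbb{F}_{1}$ and $\varphi(E)$ is a plane by Corollary~\ref{theorem:1-cor-contraction}), which gives an additional handle for ruling the configuration out.
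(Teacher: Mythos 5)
Your first two steps reproduce the paper's argument: the observation that $C\cap E_{\psi}\ne\emptyset$ would produce a curve $Z$ with $K_{Y'}\cdot f_{*}Z=0$ and $E\cdot Z>0$, contradicting nefness of $-K_{Y}$, is exactly the paper's second case, and the projection argument bounding $-K_{X'}\cdot\psi(C)$ is the paper's appeal to Lemma~\ref{theorem:simple-case} (your sharpening to $g=34$, $\Gamma$ a line, $-K_{Y}^{3}=66$ is correct but not needed). The genuine gap is in your final step. You reduce everything to showing that $X_{70}$ contains no line disjoint from its singular point $o$, and you propose to do this by classifying low-degree curves on $\mathbb{P}(6,4,1,1)$ through $P$, $Q$, the generic points of the singular line and the projection centre $O$ --- but this classification is never carried out (``should eliminate the remaining candidates''), and it is harder than you indicate: your own formula $-K_{\mathbb{P}}\cdot\widetilde{\Gamma}=1+\mathrm{mult}_{O}\widetilde{\Gamma}$ permits $\widetilde{\Gamma}$ to pass through $O$ with arbitrary multiplicity, where the non-integrality of $\mathcal{O}_{\mathbb{P}}(1)\cdot\widetilde{\Gamma}$ no longer gives an immediate contradiction, and you do not address the degenerate possibility that $\Gamma$ lies in the surface of $X_{70}$ swept out by the exceptional divisor of the blow up of $O$, in which case $\widetilde{\Gamma}$ is not a curve at all.

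The paper sidesteps this entire classification. Having established $C\cap E_{\psi}=\emptyset$, it notes that $X'=X_{70}$ is isomorphic to $\mathbb{P}=\mathbb{P}(6,4,1,1)$ near $\psi(C)$, so the image curve $C'$ on $\mathbb{P}$ is disjoint from $\mathrm{Sing}(\mathbb{P})$; then
$$
0<\mathcal{O}_{\mathbb{P}}(1)\cdot C'=\frac{1}{12}\left(-K_{X'}\cdot\psi(C)\right)\leqslant\frac{1}{6}
$$
is already absurd, since $\mathcal{O}_{\mathbb{P}}(1)$ is Cartier away from $\mathrm{Sing}(\mathbb{P})$ and therefore has positive \emph{integral} degree on any curve avoiding it. Thus both alternatives ($C$ meets $E_{\psi}$ or it does not) are contradictory, the configuration never occurs, and there is no line left to exclude. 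The missing idea in your write-up is precisely this one-line non-integrality argument applied to $C'$ itself, rather than a case analysis of which singular points a putative line could pass through.
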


\begin{proof}
Set $C:=f(E)$. By Lemma~\ref{theorem:smooth-terminal-modification-C} threefold $Y'$ is non-singular and according to
\cite{Cutkosky} $f$ is the blow up of $C$.

If $\psi(C)$ does not pass
through the singular point on $X'$, then repeating word by word the arguments from the proof of
Lemma~\ref{theorem:simple-case} we get
$$
-K_{X'} \cdot \psi(C) \leqslant 2.
$$
By construction of $X_{70}$ and our assumption $X'$ is isomorphic
to $\mathbb{P}=\mathbb{P}(6,4,1,1)$ near $\psi(C)$. In particular,
the image $C'$ of the curve $\psi(C)$ on $\mathbb{P}$ does not
pass through the singular points.

On the other hand, we have

$$
0 < \mathcal{O}_{\mathbb{P}}(1)\cdot C' = \frac{1}{12}(-K_{X'} \cdot \psi(C))\leqslant \frac{1}{6}.
$$
This implies that the curve $C'$ passes through the singular point on $\mathbb{P}$, a contradiction.

Thus, we get $E_{\psi} \cap C \ne \emptyset$ for the $\psi$-exceptional locus $E_{\psi}$.
But $\psi(E_{\psi})$ is a point on $X'$. Hence we can find a curve $Z \subset Y$ such that $K_{Y'} \cdot f_{*}Z = 0$
and $E \cdot Z > 0$. Then from the equality
$$
K_{Y}=f^{*}K_{Y'}+E
$$
we get $K_{Y} \cdot Z > 0$. This is impossible because $-K_{Y}$ is nef.
\end{proof}

\begin{remark}
\label{remark:no-nef-contractions} It follows from
Lemma~\ref{theorem:contraction-to-point},
Propositions~\ref{theorem:contraction-to-curve} and
\ref{theorem:singularities-of-X},
Corollaries~\ref{theorem:contraction-to-point-C} and
\ref{theorem:contraction-to-curve-C}, the estimates $\rho(Y') <
\rho(Y)$ and $-K_{Y'}^{3} \geqslant -K_{Y}^{3}$ (see
Lemmas~\ref{theorem:0-contraction} and
\ref{theorem:1-contraction}) that to complete the proof of
Theorem~\ref{theorem:main} it remains to consider the case when
there are no $K$-negative extremal contractions of $Y$ on weak
Fano threefolds with terminal factorial singularities. Then by
Corollaries~\ref{theorem:0-cor-contraction} and
\ref{theorem:1-cor-contraction} either the corresponding Fano
threefold $X$ is singular along a line or contains a plane.
\end{remark}

\section{Reduction to the log Mori fibration}
\label{section:reduction}

In this section we follow $\S 6$ in \cite{Prokhorov-degree}. Let $X$ be a Fano threefold satisfying the conditions
of Theorem~\ref{theorem:main}. Recall that by Remark~\ref{remark:anti-canonical-embedding} threefold
$X = X_{2g-2}$ is anti-canonically embedded in $\mathbb{P}^{g+1}$ with $g \in \{34,35,36\}$ and $X_{2g-2}$ is an intersection
of quadrics.

According to Remark~\ref{remark:no-nef-contractions} in order to complete the proof of
Theorem~\ref{theorem:main} we may assume that either $X$ is singular along a line $\Gamma$ or contains a plane $\Pi$
(cases ${\bf A}$ and ${\bf B}$, respectively). It is also convenient to distinguish the case ${\bf C}$: threefold $X$ has at
least one non-$\mathrm{cDV}$ point $O$.

Set $\mathcal{L}: = |-K_{X}|$ and consider the following linear
systems $\mathcal{H} \subset \mathcal{L}$:
$$
\begin{array}{c}
\mathcal{H}:=\{H\in \mathcal{L}|H\supset \Gamma\}\qquad\mbox{in case ${\bf A}$;}\\
\\
\mathcal{H}:=\{H|H+\Pi\in\mathcal{L}\}\qquad\mbox{in case ${\bf B}$;}\\
\\
\mathcal{H}:=\{H\in \mathcal{L}|H \ni O\}\qquad\mbox{in case ${\bf C}$.}\\
\end{array}
$$

Take a terminal $\mathbb{Q}$-factorial modification $\varphi: Y \longrightarrow X$ of $X$. Let $\mathcal{L}_{Y}$ and
$\mathcal{H}_{Y}$ be the proper transforms on $Y$ of linear systems $\mathcal{L}$ and $\mathcal{H}$, respectively. We
have
$$
\varphi^{*}\mathcal{H} = \mathcal{H}_{Y}+D_{Y}
$$
where $D_{Y} \ne 0$ is an integral effective exceptional divisor.

\begin{remark}
\label{remark:birationally-rulled-components}
According to Corollary 2.14 in \cite{Reid-canonical-threefolds} all the components of $D_{Y}$ are surfaces of negative Kodaira dimension.
\end{remark}

Furthermore, we have
$$
\begin{array}{c}
K_{Y}+\mathcal{H}_{Y}+D_{Y}=\varphi^{*}(K_{X}+\mathcal{H})\sim 0\qquad\mbox{in cases ${\bf A}$ and ${\bf C}$;}\\
\\
K_{Y}+\mathcal{H}_{Y}+D_{Y}=\varphi^{*}(K_{X}+\mathcal{H}+\Pi)\sim 0\qquad\mbox{in case ${\bf B}$.}
\end{array}
$$

\begin{lemma}[{\cite[Lemma 6.5]{Prokhorov-degree}}]
\label{theorem:image-of-projection} In the above notation, the image
of the threefold $X$ under the map $\varphi_{\mathcal{H}}$ given by
the linear system $\mathcal{H}$ is of dimension $3$.
\end{lemma}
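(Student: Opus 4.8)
The plan is to realize $\varphi_{\mathcal{H}}$ as a linear projection and to prove that this projection is generically finite onto its image. By Remark~\ref{remark:anti-canonical-embedding} we view $X = X_{2g-2} \subset \mathbb{P}^{g+1}$ as anticanonically embedded, and then $\varphi_{\mathcal{H}}$ is the restriction to $X$ of the linear projection $p_{\Lambda}\colon \mathbb{P}^{g+1} \dashrightarrow \mathbb{P}^{\dim\mathcal{H}}$ with centre $\Lambda$ equal to the line $\Gamma$ in case ${\bf A}$, the plane $\langle\Pi\rangle \cong \mathbb{P}^{2}$ in case ${\bf B}$, and the point $O$ in case ${\bf C}$ (the fixed component $\Pi$ being discarded in case ${\bf B}$). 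Since $\dim X = 3$, the assertion $\dim\varphi_{\mathcal{H}}(X) = 3$ is equivalent to generic finiteness of $p_{\Lambda}|_{X}$; passing to the modification $Y$ this amounts to $M^{3} > 0$, where $M$ is the moving part of $\mathcal{H}_{Y}$. First I would record that $\dim\mathcal{H}$ is large (at least $g - 3 \geqslant 31$), so that $\varphi_{\mathcal{H}}(X)$ is a nondegenerate subvariety of a high-dimensional projective space.

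The core is a proof by contradiction: assume $\dim\varphi_{\mathcal{H}}(X) \leqslant 2$. Then $X$ is swept out by the positive-dimensional fibres of $p_{\Lambda}|_{X}$, and each fibre is a curve lying, together with $\Lambda$, in a linear span $\langle\Lambda, x\rangle$ (a plane in case ${\bf A}$, a $\mathbb{P}^{3}$ in case ${\bf B}$, a line in case ${\bf C}$). Here I would use decisively that $X$ is an intersection of quadrics (Proposition~\ref{theorem:free-antican-system-2} and Remark~\ref{remark:anti-canonical-embedding}): restricting the defining quadrics to $\langle\Lambda, x\rangle$ cuts out $X \cap \langle\Lambda, x\rangle$ by conics, and a conic containing the base line $\Gamma$ (respectively containing $\Pi$) necessarily degenerates into lines. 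This should force the moving fibres to be lines, so that $X$ carries a two-dimensional family of lines $\{\ell\}$ with $-K_{X}\cdot\ell = 1$, each meeting the centre $\Lambda$.

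To conclude I would extract a contradiction from this covering. In case ${\bf C}$ all the lines $\ell$ pass through the single point $O$, so $X$ is a cone with vertex $O$; comparison with the classification in Theorem~\ref{theorem:Prokhorov-degree} (together with Theorem~\ref{theorem:main} in degree $70$) and with $-K_{X}^{3} \in \{66,68,70\}$ excludes this, since none of the admissible threefolds is such a cone. In cases ${\bf A}$ and ${\bf B}$ the family $\{\ell\}$ exhibits $X$ as a birational scroll swept by lines through $\Lambda$, and the resulting structure collides with the bound $-K_{X}^{3} \leqslant 72$ and the short list of extremal threefolds of Theorem~\ref{theorem:Prokhorov-degree}; a supporting numerical check compares the lower bound $\deg\varphi_{\mathcal{H}}(X) \geqslant \dim\mathcal{H} - 1$ for the nondegenerate image surface with the estimate for $M^{2}\cdot(-K_{Y})$ coming from nefness of $-K_{Y}$ and $(-K_{Y})^{3} = 2g-2$.

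The main obstacle I expect is twofold. First, one must treat the three cases uniformly while controlling the exceptional-plus-fixed divisor $D_{Y}$, so as to guarantee that it is the moving part $M$ and not the fixed part that governs the dimension of the image; the equality $-K_{Y}^{3} = -K_{X}^{3}$ together with nefness of $-K_{Y}$ is what keeps $D_{Y}$ in check. Second, and more seriously, ruling out the degenerate ``swept by lines'' configuration is delicate in case ${\bf B}$, where the spans $\langle\Lambda, x\rangle$ are three-dimensional and the residual intersection is a priori a curve rather than a finite set; this is precisely the step at which the intersection-of-quadrics hypothesis has to be exploited most carefully, and it is where I anticipate the proof to require the most work.
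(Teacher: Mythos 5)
The paper does not actually prove this lemma: it is quoted directly from \cite[Lemma 6.5]{Prokhorov-degree}, so there is no in-text argument to compare yours against. Your opening is the natural line of attack and is sound as far as it goes: $\varphi_{\mathcal{H}}$ is the linear projection from $\Gamma$, from $\Pi$ (a linear $\mathbb{P}^{2}$, since $(-K_{X})^{2}\cdot\Pi=1$), or from $O$, and because $X_{2g-2}$ is an intersection of quadrics (Remark~\ref{remark:anti-canonical-embedding}) a positive-dimensional general fibre forces $X$ to be swept out by lines meeting the centre, or to be a cone with vertex $O$ in case ${\bf C}$.

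The endgame, however, is where the content of the lemma lies, and there your argument has genuine gaps. First, in case ${\bf C}$ you exclude the cone by appealing to Theorem~\ref{theorem:main}; that is circular, since this lemma is an ingredient of the proof of Theorem~\ref{theorem:main} (Sections~\ref{section:reduction}--\ref{section:surface}). A non-circular argument is short: if $X$ were a cone with vertex $O$, its base would be a general hyperplane section avoiding $O$, hence a $\mathrm{K3}$ surface $S$ with Du Val singularities (Theorem~\ref{theorem:elefant}); blowing up the vertex gives an exceptional divisor $E\simeq S$ with $\mathcal{O}_{E}(E)=-L$ for $L=\mathcal{O}_{S}(1)$, and adjunction $K_{E}=(K_{\tilde X}+E)|_{E}$ together with $K_{S}\sim 0$ yields discrepancy $a(E,X)=-1$, contradicting canonicity of $X$. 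Second, in cases ${\bf A}$ and ${\bf B}$ the asserted ``collision'' with Theorem~\ref{theorem:Prokhorov-degree} is not an argument: that theorem only classifies degree $72$ and says nothing about a threefold of degree $66$--$70$ ruled by lines through $\Gamma$ or $\Pi$. Your supporting numerical check does not close these cases either: writing $F$ for the general fibre one gets $M^{2}\cdot(-K_{Y})=\deg\varphi_{\mathcal{H}}(X)\cdot(-K_{Y}\cdot F)$ with $-K_{Y}\cdot F\leqslant 1$, so the resulting inequality $g-3\leqslant\deg\varphi_{\mathcal{H}}(X)\leqslant 2g-2$ is perfectly consistent and no contradiction appears. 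What is still missing is an actual reason why such a line ruling is impossible --- for instance, that a general $H\in\mathcal{H}$ would then itself be swept out by the lines of the family while carrying trivial dualizing sheaf, which one must convert into the Kodaira-dimension contradiction used elsewhere in the paper after controlling the singularities of $H$ (Theorem~\ref{theorem:elefant} applies to a general member of $|-K_{X}|$, not of the subsystem $\mathcal{H}$). As written, the decisive step is announced but not supplied.
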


\begin{lemma}[{\cite[Lemma 6.8]{Prokhorov-degree}}]
\label{theorem:good-properties} In the above notation, one can take
$Y$ so that
\begin{itemize}
\item the pair $(Y, \mathcal{H}_{Y})$ is canonical;
\item the linear system $\mathcal{H}_{Y}$ is nef and consists of Cartier divisors.
\end{itemize}
\end{lemma}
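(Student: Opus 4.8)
The plan is to stay within the class of terminal $\mathbb{Q}$-factorial modifications of $X$ and to optimize the model by a sequence of crepant birational modifications. Since a terminal modification satisfies $K_Y=\varphi^{*}K_X$ and $-K_Y=\varphi^{*}(-K_X)$ with $-K_X$ ample, every curve contracted by $\varphi$ is $K_Y$-trivial; hence the relative cone $\overline{NE}(Y/X)$ consists of $K_Y$-trivial extremal rays, and by Proposition~\ref{theorem:extremal-rays} it is rational polyhedral. Every extremal contraction, flip or flop over $X$ is therefore crepant and, by Remark~\ref{remark:terminal-modifications-are-connected-by-flops} and \cite[Theorem 4.3]{Kollar-flops}, leads again to a terminal $\mathbb{Q}$-factorial modification of $X$. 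The Cartier requirement needs no work: since $Y$ is factorial by Lemma~\ref{theorem:factoriality}, and $\mathcal{H}_Y\subseteq|-K_Y-D_Y|$ with $-K_Y$ Cartier (as $X$ is Gorenstein) and $D_Y$ an integral divisor, every member of $\mathcal{H}_Y$ is automatically Cartier.

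For canonicity I would argue at the level of the log pair $(X,\mathcal{H})$ (resp. $(X,\mathcal{H}+\Pi)$ in case ${\bf B}$), which is log Calabi--Yau since $K_X+\mathcal{H}\sim 0$. Fix a log resolution $g\colon\widetilde{Y}\to Y$ of $\mathrm{Bs}\,\mathcal{H}_Y$ and write $g^{*}\mathcal{H}_Y=\mathcal{M}+\sum_i m_iE_i$ with $\mathcal{M}$ the free mobile part and all $E_i$ exceptional (as $\mathcal{H}_Y$ is mobile), together with $K_{\widetilde{Y}}=g^{*}K_Y+\sum_i a_iE_i$, $a_i>0$. Using $K_Y+\mathcal{H}_Y\sim -D_Y$ this gives
\[
K_{\widetilde{Y}}+\mathcal{M}\sim -g^{*}D_Y+\sum_i(a_i-m_i)E_i,
\]
so $(Y,\mathcal{H}_Y)$ is canonical precisely when $a_i\geqslant m_i$ for all $i$, i.e. when every divisor $E$ over $X$ has log discrepancy $a(E;X)-v_E(\mathcal{H})\geqslant 0$. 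To achieve this I would extract onto $Y$ all \emph{crepant} divisors (those with $a(E;X)=0$) whose center lies in the base locus of $\mathcal{H}$, absorbing them into $D_Y$; this is the construction of a $\mathbb{Q}$-factorial crepant model of the pair, it keeps $K_Y=\varphi^{*}K_X$ because only discrepancy-zero divisors are extracted, and after it the remaining discrepancies over $Y$ are non-negative.

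Once the pair is canonical I would make $\mathcal{H}_Y$ nef by a relative MMP over $X$. Over $X$ one has $\mathcal{H}_Y\equiv -D_Y$, and since $\overline{NE}(Y/X)$ is polyhedral only finitely many rays are $\mathcal{H}_Y$-negative; flopping and crepantly contracting them (all steps being $K_Y$-trivial over $X$, hence preserving canonicity of the log pair, which is a crepant-birational invariant of $(X,\mathcal{H})$) renders $\mathcal{H}_Y$ relatively nef, while Lemma~\ref{theorem:image-of-projection} guarantees that $\mathcal{H}$, and so $\mathcal{H}_Y$, is big and nef in the end. The main obstacle, and the step requiring the geometry of the present situation, is the canonicity claim: one must rule out that some divisor $E$ with positive discrepancy $a(E;X)\geqslant 1$ satisfies $v_E(\mathcal{H})>a(E;X)$, since such a divisor cannot be extracted crepantly. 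Here I would invoke that $\mathcal{H}\subseteq|-K_X|$ with $-K_X^{3}$ large, so that by Propositions~\ref{theorem:non-free-antican-system}--\ref{theorem:free-antican-system-2} and the explicit description of the base locus in cases ${\bf A}$, ${\bf B}$, ${\bf C}$ (along $\Gamma$, $\Pi$, $O$) the multiplicities $v_E(\mathcal{H})$ stay mild; Remark~\ref{remark:birationally-rulled-components} further controls the components of $D_Y$. Assembling these modifications and using the flop-connectedness of terminal modifications then yields the required $Y$.
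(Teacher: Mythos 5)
First, note that the paper does not prove this lemma at all: it is imported verbatim from \cite[Lemma 6.8]{Prokhorov-degree}, so your argument has to stand on its own. The Cartier part of your argument (factoriality of $Y$, Lemma~\ref{theorem:factoriality}) is fine, but both substantive claims have genuine gaps. For canonicity, your proposed mechanism --- ``extract onto $Y$ all crepant divisors whose center lies in $\mathrm{Bs}\,\mathcal{H}$'' --- buys nothing: a terminal $\mathbb{Q}$-factorial modification already extracts \emph{every} divisor with $a(E;X)=0$ (otherwise $Y$ could not be terminal while remaining crepant over $X$), so the only freedom in the choice of $Y$ is via flops (Remark~\ref{remark:terminal-modifications-are-connected-by-flops}), and you never explain how a flop improves the pair. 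Your displayed criterion ``$(Y,\mathcal{H}_Y)$ is canonical precisely when $a(E;X)\geqslant v_E(\mathcal{H})$ for all $E$ over $X$'' is also not correct as stated: since $\mathrm{mult}_{E}\mathcal{H}_Y=v_{E}(\mathcal{H})-\mathrm{mult}_{E}\,g^{*}D_Y$, the actual condition is the weaker $a(E;X)\geqslant v_{E}(\mathcal{H})-\mathrm{mult}_{E}\,g^{*}D_Y$, and exploiting the $D_Y$-term is exactly where the choice of model enters. Most importantly, the case you yourself isolate as ``the main obstacle'' --- a divisor $E$ with $a(E;X)\geqslant 1$ but $v_E(\mathcal{H})>a(E;X)$ --- is the entire content of the lemma, and you dispose of it only by asserting that the multiplicities ``stay mild''; no argument is given, and none of the cited facts (very ampleness of $-K_X$, intersection of quadrics, Remark~\ref{remark:birationally-rulled-components}) yields such a bound without real work.

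The nefness step also fails as written. Running an $\mathcal{H}_Y$-MMP \emph{over} $X$ can at best make $\mathcal{H}_Y$ nef relative to $X$, and since $\mathcal{H}_Y\sim -K_Y-D_Y$ with $D_Y$ effective and $\varphi$-exceptional, relative nefness does not imply nefness: for a curve $C$ lying in a component of $D_Y$ that $\varphi$ contracts to a curve, with $\varphi_{*}C\neq 0$, one has $\mathcal{H}_Y\cdot C=-K_X\cdot\varphi_{*}C-D_Y\cdot C$, which can well be negative. To repair this one must treat $\mathcal{H}_Y$-negative extremal rays of the absolute cone $\overline{NE}(Y)$; such a ray need not be $K_Y$-trivial, so it cannot always be flopped while keeping $Y$ crepant over $X$, and if it is $K_Y$-trivial and divisorial, contracting it destroys the terminal-modification property. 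None of these cases is addressed. So while your overall strategy (stay among terminal modifications, use flops and the relation $K_Y+\mathcal{H}_Y+D_Y\sim 0$) is the natural one, the two assertions that constitute the lemma are both left unproved.
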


To the pair $(Y, \mathcal{H}_{Y})$ constructed in
Lemma~\ref{theorem:good-properties} let us apply
$(K_{Y}+\mathcal{H}_{Y})$-Minimal Model Program. Each step
preserves the equality $K + \mathcal{H} \equiv -D$ for some
divisor $D>0$. Hence at the end we obtain a pair $(W,
\mathcal{H}_{W})$ with $(K_{W}+\mathcal{H}_{W})$-negative
contraction $g: W \longrightarrow V$ to a lower-dimensional
variety $V$.

\begin{remark}
\label{remark:singularities-of-pairs} By \cite[Lemma
3.4]{Prokhorov-degree} $(K_{Y}+\mathcal{H}_{Y})$-Minimal Model
Program preserves the property of $\mathcal{H}_{Y}$ being nef and
consisting of Cartier divisors. This implies that $W$ has only
terminal $\mathbb{Q}$-factorial singularities. Moreover, let us
denote by $\mathcal{L}_{W}$ the proper transform on $W$ of the
linear system $\mathcal{L}_{Y}$. Then $K_{W} + \mathcal{L}_{W}
\sim 0$ and by Lemma 3.1 in \cite{Prokhorov-degree} the pair $(W,
\mathcal{L}_{W})$ is canonical. In particular, the  anti-canonical
linear system $|-K_{W}|$ does not have fixed components.
\end{remark}

\begin{remark}
\label{remark:linear-systems}
By construction the initial Fano threefold $X$ is the image of $W$ under the birational map
$\varphi_{\mathcal{L}_{W}}$ given by the linear system $\mathcal{L}_{W} \subset |-K_{W}|$. Furthermore, from Lemma~\ref{theorem:image-of-projection} we deduce that the linear system $\mathcal{H}_{W}$ is ample over $V$ and
does not have fixed components.
\end{remark}

We have
$$
K_{W}+\mathcal{H}_{W}+D_{W}\sim 0,\qquad D_{W} > 0
$$
on $W$ and the inequalities
$$
\begin{array}{c}
\dim|-K_{W}| \geqslant \dim\mathcal{L}_{W} = \dim|-K_{X}|,\\
\\
\dim|H| \geqslant \dim\mathcal{H},
\end{array}
$$
where $H \in \mathcal{H}_{W}$ is a general divisor.

\begin{remark}
\label{remark:inequalities} By Remark~\ref{remark:linear-systems}
we have $\dim |-K_{W}| \geqslant \dim |-K_{X}|$. By construction
of the linear system $\mathcal{H}$ we have $\dim\mathcal{H}
\geqslant \dim |-K_{X}| - 3$. Then from the assumption $-K_{X}^{3}
\in \{66,68,70\}$ we obtain that $\dim |-K_{W}| \geqslant 35$ and
$\dim |H| \geqslant 32$.
\end{remark}

By construction variety $V$ is a point, a curve or a surface. Moreover, when $V$ is a point by Proposition 7.2 in \cite{Prokhorov-degree}
we have $\dim |-K_{W}| \leqslant 34$ which contradicts the estimate from Remark~\ref{remark:inequalities}.

In what follows we will treat separately the cases when $V$ is a curve and a surface.

\section{Proof of Theorem~\ref{theorem:main} in the case when $V$ is a curve}
\label{section:curve}

We use the notation and assumptions from
Section~\ref{section:reduction}. From the Leray spectral sequence
and Kawamata--Vieweg Vanishing Theorem we deduce that $H^{1}(V,
\mathcal{O}_{V})=H^{1}(W, \mathcal{O}_{W})=0$. Thus, we have $V
\simeq \mathbb{P}^{1}$ in the present case.

General fibre $W_{\eta}$ of the contraction $g: W \longrightarrow V$ is a non-singular del Pezzo
surface.
For $H \in \mathcal{H}_{W}$ divisor $-(K_{W_{\eta}}+H|_{W_{\eta}})$ is ample by construction and divisor $H|_{W_{\eta}}$ is
ample by Remark~\ref{remark:linear-systems}. This implies that $W_{\eta} \simeq \mathbb{P}^{2}$ or
$\mathbb{P}^{1}\times\mathbb{P}^{1}$. Moreover, for $W_{\eta} \simeq \mathbb{P}^{2}$ we have either
$H|_{W_{\eta}} \simeq \mathcal{O}_{\mathbb{P}^{2}}(1)$ or $\mathcal{O}_{\mathbb{P}^{2}}(2)$.
\begin{lemma}
\label{theorem:special-scrolls}
In the above notation, if $W_{\eta} \simeq \mathbb{P}^{2}$ and $H|_{W_{\eta}} \simeq \mathcal{O}_{\mathbb{P}^{2}}(1)$,
then $W$ is a $\mathbb{P}^{2}$-bundle. Moreover, either
$W = \mathrm{Proj}\left(\mathcal{O}_{\mathbb{P}^{1}}(5)\oplus\mathcal{O}_{\mathbb{P}^{1}}(2)\oplus\mathcal{O}_{\mathbb{P}^{1}}\right)$
or $\mathrm{Proj}\left(\mathcal{O}_{\mathbb{P}^{1}}(6)\oplus\mathcal{O}_{\mathbb{P}^{1}}(2)\oplus\mathcal{O}_{\mathbb{P}^{1}}\right)$.
\end{lemma}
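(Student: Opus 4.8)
The plan is to build the scroll structure directly from the relative polarization and then to read off its splitting type from the anti-canonical numerics. First I would set $\mathcal{E}:=g_{*}\mathcal{O}_{W}(H)$. Since $H|_{W_{\eta}}\simeq\mathcal{O}_{\mathbb{P}^{2}}(1)$ we have $h^{0}(W_{\eta},H|_{W_{\eta}})=3$ and $h^{1}(W_{\eta},H|_{W_{\eta}})=0$, so cohomology and base change show that $\mathcal{E}$ is locally free of rank $3$ on $V\simeq\mathbb{P}^{1}$, and the evaluation map $g^{*}\mathcal{E}\to\mathcal{O}_{W}(H)$ produces a $V$-morphism $j\colon W\to\mathbb{P}(\mathcal{E})$ which is an isomorphism over the generic point of $V$.

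To promote $j$ to a global isomorphism I would show that every fibre of $g$ is $(\mathbb{P}^{2},\mathcal{O}(1))$. Since $g$ is an extremal contraction we have $\rho(W/V)=1$ and $W$ is $\mathbb{Q}$-factorial, so each fibre $F$ is irreducible and reduced, and $H|_{F}$ is an ample Cartier divisor with $(H|_{F})^{2}=H^{2}\cdot F=1$; a normal surface with canonical singularities carrying an ample Cartier divisor of self-intersection $1$ is forced to be $(\mathbb{P}^{2},\mathcal{O}(1))$. As $\mathcal{O}_{\mathbb{P}^{2}}(1)$ is very ample and projectively normal, $j$ restricts to an isomorphism on each fibre, hence $j$ is an isomorphism and $W=\mathbb{P}(\mathcal{E})$ with $\mathcal{E}=\bigoplus_{i=0}^{2}\mathcal{O}_{\mathbb{P}^{1}}(a_{i})$, which I normalise so that $a_{0}\geq a_{1}\geq a_{2}=0$.

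For the remaining statement I would pin the $a_{i}$ down numerically. Writing $\xi$ for the tautological class and $F$ for a fibre, one has $-K_{W}=3\xi+(2-\deg\mathcal{E})F$, so that $-K_{W}^{3}=54$ for every such bundle. In particular, since the anti-canonical image $X$ satisfies $-K_{X}^{3}\in\{66,68,70\}>54$, the birational map $\varphi_{\mathcal{L}_{W}}$ is a genuinely non-crepant contraction of the negative part of $|-K_{W}|$, and $-K_{X}^{3}$ is the self-intersection of the moving part of $|-K_{W}|$. I would then compute $\dim|-K_{W}|=h^{0}(\mathbb{P}^{1},S^{3}\mathcal{E}\otimes\mathcal{O}(2-\deg\mathcal{E}))-1$ as an explicit function of $(a_{0},a_{1})$ and impose the constraints from the previous sections: (i) $\dim|-K_{W}|\geq\dim|-K_{X}|$ with image degree in $\{66,68,70\}$; (ii) the effective divisor $D_{W}\sim -K_{W}-H$ meets a general fibre in a conic, so it has class $2\xi+dF$, and by Remark~\ref{remark:birationally-rulled-components} each of its components has negative Kodaira dimension; (iii) $-K_{X}^{3}\leqslant 70$. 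A short analysis of these conditions leaves only finitely many splitting types, and exactly the two bundles in the statement survive.

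The hard part is (ii)--(iii): the honest bookkeeping of the non-nef system $|-K_{W}|$. Because $-K_{W}$ is negative on the minimal section of $W$ (on which $\xi$ is trivial and $-K_{W}=2-\deg\mathcal{E}<0$), I must identify the fixed (negative) part of $|-K_{W}|$, equivalently the exceptional divisor $D_{W}$, precisely, compute the cube of the resulting nef moving part to obtain $-K_{X}^{3}$, and verify the negative-Kodaira-dimension condition on $D_{W}$. This is where the bound $-K_{X}^{3}\leqslant 70$ genuinely intervenes and where the admissible scrolls $\mathcal{O}(5)\oplus\mathcal{O}(2)\oplus\mathcal{O}$ and $\mathcal{O}(6)\oplus\mathcal{O}(2)\oplus\mathcal{O}$ are separated from every other splitting type.
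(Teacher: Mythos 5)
Your outline reproduces the right skeleton (pass to $\mathcal{E}=g_{*}\mathcal{O}_{W}(H)$, identify $W$ with $\mathbb{P}(\mathcal{E})$, then pin down the splitting type numerically), but as written it has two genuine gaps. First, the step promoting $j$ to an isomorphism rests on the claim that a normal surface with canonical singularities carrying an ample Cartier divisor $A$ with $A^{2}=1$ must be $(\mathbb{P}^{2},\mathcal{O}_{\mathbb{P}^{2}}(1))$; that is false as stated --- a del Pezzo surface of degree $1$ with Du Val singularities is a counterexample. What saves the argument is the extra input $h^{0}(F,H|_{F})\geqslant 3$ (from semicontinuity together with $h^{0}(W_{\eta},H|_{W_{\eta}})=3$ and $R^{1}g_{*}=0$), after which $|H|_{F}|$ maps $F$ birationally and finitely onto a plane; you need to say this, and you also need to justify that special fibres are reduced, irreducible and normal, which is not automatic for an extremal contraction to a curve and is precisely the content of the results of Prokhorov that the paper cites.

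Second, and more seriously, the decisive step --- eliminating every splitting type other than $(5,2,0)$ and $(6,2,0)$ --- is announced (\emph{``a short analysis of these conditions leaves only finitely many splitting types''}) but not performed, and you concede as much in your final paragraph. The constraints you list do not visibly suffice: for example $\mathcal{O}_{\mathbb{P}^{1}}(7)\oplus\mathcal{O}_{\mathbb{P}^{1}}(2)\oplus\mathcal{O}_{\mathbb{P}^{1}}$ has $\dim|-K_{W}|=41\geqslant 35$ and an effective $D_{W}\sim 2\xi+dF$, so it survives conditions (i) and (ii) as you state them. What actually kills the large splitting types is the requirement that the pairs $(W,\mathcal{H}_{W})$ and $(W,\mathcal{L}_{W})$ be canonical (Remark~\ref{remark:singularities-of-pairs}), which bounds the multiplicity of $|-K_{W}|$ along the negative section of the scroll; this is the content of Lemma 8.1 and Proposition 8.2 of \cite{Prokhorov-degree}, which the paper invokes to obtain the finite list $(1,1),(2,1),(2,2),(3,1),(3,2),(4,1),(4,2),(5,2),(6,2)$, after which the single inequality $\dim|-K_{W}|\geqslant 35$ from Remark~\ref{remark:inequalities} leaves exactly $(5,2)$ and $(6,2)$. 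Until you either reprove that boundedness or cite it, the proof is incomplete at exactly the point where the lemma's content lies.
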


\begin{proof}
It follows at once from Lemma 8.1 and Proposition 8.2 in \cite{Prokhorov-degree} and Remark~\ref{remark:singularities-of-pairs} that
$W = \mathrm{Proj}\left(\mathcal{O}_{\mathbb{P}^{1}}(d_{1})\oplus\mathcal{O}_{\mathbb{P}^{1}}(d_{2})\oplus\mathcal{O}_{\mathbb{P}^{1}}\right)$
with the following possibilities for $(d_{1}, d_{2})$:
$$
(1,1),\hspace{15pt} (2,1),\hspace{15pt} (2,2),\hspace{15pt} (3,1),\hspace{15pt} (3,2),\hspace{15pt} (4,1),\hspace{15pt} (4,2),\hspace{15pt} (5,2),\hspace{15pt}
(6,2).
$$
Except for the last two cases we have $\dim |-K_{W}| \leqslant 33$ (see for example \cite[2.5]{Reid-chapters-on-surfaces}).
Hence by Remark~\ref{remark:inequalities} the pair $(d_{1},d_{2})$ is either $(5,2)$ or $(6,2)$.
\end{proof}

\begin{proposition}
\label{theorem:T-variety} In the assumptions of
Lemma~\ref{theorem:special-scrolls}, if $W =
\mathrm{Proj}\left(\mathcal{O}_{\mathbb{P}^{1}}(5)\oplus\mathcal{O}_{\mathbb{P}^{1}}(2)\oplus\mathcal{O}_{\mathbb{P}^{1}}\right)$,
then $-K_{X}^{3} = 66$ and $X$ the anti-canonical image of $W$.
\end{proposition}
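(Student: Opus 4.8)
The plan is to read off the degree purely from the dimension of the anti-canonical system on the scroll $W$, and then to identify $X$ with the anti-canonical image by a dimension count, rather than by computing intersection numbers directly. Write $\pi\colon W\to\mathbb{P}^{1}$ for the projection, $\xi$ for the class of the tautological bundle $\mathcal{O}_{W}(1)$ and $F$ for a fibre of $\pi$, and set $\mathcal{E}=\mathcal{O}_{\mathbb{P}^{1}}(5)\oplus\mathcal{O}_{\mathbb{P}^{1}}(2)\oplus\mathcal{O}_{\mathbb{P}^{1}}$, so that $\pi_{*}\mathcal{O}_{W}(\xi)=\mathcal{E}$. By the relative canonical bundle formula $K_{W}=-3\xi+\pi^{*}(K_{\mathbb{P}^{1}}+\det\mathcal{E})$, whence $-K_{W}\sim 3\xi-5F$. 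The first and only substantial step is to compute $\dim|-K_{W}|$. Since $\pi_{*}\mathcal{O}_{W}(-K_{W})=(\mathrm{Sym}^{3}\mathcal{E})\otimes\mathcal{O}_{\mathbb{P}^{1}}(-5)$, summing the contributions of the summands $\mathcal{O}_{\mathbb{P}^{1}}(5a+2b-5)$ attached to the monomials $x_{0}^{a}x_{1}^{b}x_{2}^{c}$ with $a+b+c=3$ (keeping only non-negative degrees $10,7,5,4,2,0,1$) gives $h^{0}(W,-K_{W})=11+8+6+5+3+1+2=36$, i.e. $\dim|-K_{W}|=35$. This is precisely the scroll computation of \cite[2.5]{Reid-chapters-on-surfaces} already invoked in Lemma~\ref{theorem:special-scrolls}, evaluated at $(d_{1},d_{2})=(5,2)$.

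Next I would run the dimension chain recorded in Remark~\ref{remark:inequalities}. By construction $\mathcal{L}_{W}\subseteq|-K_{W}|$ and $\dim\mathcal{L}_{W}=\dim|-K_{X}|=g+1$, while the standing assumption $-K_{X}^{3}\in\{66,68,70\}$ forces $g\in\{34,35,36\}$, so $\dim|-K_{X}|\geqslant 35$. Combining $\dim|-K_{W}|\geqslant\dim\mathcal{L}_{W}$ with the equality $\dim|-K_{W}|=35$ just established yields
\[
35=\dim|-K_{W}|\ \geqslant\ \dim\mathcal{L}_{W}=\dim|-K_{X}|=g+1\ \geqslant\ 35 .
\]
Hence $g=34$ and $-K_{X}^{3}=2g-2=66$, as claimed.

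It then remains to identify $X$ with the anti-canonical image of $W$. The displayed chain shows $\dim\mathcal{L}_{W}=35=\dim|-K_{W}|$, and since $\mathcal{L}_{W}\subseteq|-K_{W}|$ we conclude $\mathcal{L}_{W}=|-K_{W}|$. As $X=\varphi_{\mathcal{L}_{W}}(W)$ by Remark~\ref{remark:linear-systems}, it follows that $X$ is the image of $W$ under the complete anti-canonical system, i.e. the anti-canonical image of $W$, finishing the proof.

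The only real content is the cohomology computation $h^{0}(W,-K_{W})=36$; everything else is the bookkeeping of Section~\ref{section:reduction}. The point to keep in mind is that the anti-canonical map here is a genuine birational contraction and not an isomorphism: the divisor $-K_{W}\sim 3\xi-5F$ is negative on the minimal section $C_{0}$ corresponding to the quotient $\mathcal{E}\twoheadrightarrow\mathcal{O}_{\mathbb{P}^{1}}$, for which $\xi\cdot C_{0}=0$ and hence $-K_{W}\cdot C_{0}=-5$, so $C_{0}$ lies in the base locus of $|-K_{W}|$ and $W\to X$ contracts the associated divisor to the (non-$\mathrm{cDV}$) singular point of $X$. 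This is consistent with $-K_{W}^{3}=54\neq 66=-K_{X}^{3}$, the jump being accounted for by the non-nef locus; I would only need to note that this phenomenon does not affect the identification $\mathcal{L}_{W}=|-K_{W}|$, which rests solely on the coincidence of dimensions.
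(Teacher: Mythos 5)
Your opening computation is correct and coincides with the paper's first paragraph: $h^{0}(W,-K_{W})=h^{0}(\mathbb{P}^{1},\mathrm{Sym}^{3}\mathcal{E}\otimes\mathcal{O}_{\mathbb{P}^{1}}(-5))=36$, so $\dim|-K_{W}|=35$, and squeezing this against the chain of inequalities from Remark~\ref{remark:inequalities} forces $g=34$, $-K_{X}^{3}=66$ and $\mathcal{L}_{W}=|-K_{W}|$, whence $X=\varphi_{\mathcal{L}_{W}}(W)=\varphi_{|-K_{W}|}(W)$. Up to this point you have reproduced the paper's argument exactly.

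However, you stop where the paper's proof essentially begins. After the sentence ``Let us show that this possibility is really obtained,'' the paper spends the rest of the proof resolving the base locus of $|-K_{W}|$ by three successive blow-ups of base curves, checking chart by chart that $\mathcal{L}_{W_{3}}$ becomes free, and proving (Lemma~\ref{theorem:weak-Fano}) that $W_{3}$ is a non-singular weak Fano threefold mapping crepantly onto $X=\varphi_{|-K_{W}|}(W)$, which is therefore a genuine Fano threefold with canonical Gorenstein singularities of degree $66$. This is not optional bookkeeping. Since $-K_{W}\cdot C_{0}=-5<0$ (as you yourself note), $W$ is not a weak Fano and one cannot invoke Remark~\ref{remark:K-trivial-contraction-1} or \cite{Kawamata} to conclude that the anticanonical image is a Fano threefold with canonical Gorenstein singularities; without the resolution, your argument only shows that \emph{if} an $X$ with $-K_{X}^{3}\in\{66,68,70\}$ produces this scroll, \emph{then} it equals $\varphi_{|-K_{W}|}(W)$ --- which could in principle be vacuous, leaving Theorem~\ref{theorem:main} unable to assert that the listed variety actually occurs and has degree $66$. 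Moreover, the explicit model $W_{3}\to X$ built in this verification is precisely the terminal $\mathbb{Q}$-factorial modification used immediately afterwards in Proposition~\ref{theorem:singularities-of-T-variety} to show that the singularities of $X_{66}$ are non-$\mathrm{cDV}$; your closing aside, which asserts that the divisor over $C_{0}$ is contracted to a non-$\mathrm{cDV}$ point, is exactly the content of that later proposition and cannot be taken for granted here. So the degree computation is fine, but the constructive half of the proof is a genuine omission rather than a shortcut.
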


\begin{proof}
For $W = \mathrm{Proj}\left(\mathcal{O}_{\mathbb{P}^{1}}(5)\oplus\mathcal{O}_{\mathbb{P}^{1}}(2)\oplus\mathcal{O}_{\mathbb{P}^{1}}\right)$ we have $\dim |-K_{W}| = 35$.
From Remark~\ref{remark:inequalities} we deduce the equality
$|-K_{W}| = \mathcal{L}_{W}$. In particular, for the initial Fano threefold $X$ we must have $-K_{X}^{3} = 66$.
Let us show that this possibility is really obtained.

On $W$ we have $-K_{W} \sim 3M - 5L$ where $M$ is the tautological line bundle and $L$ is the fibre of the morphism $g$ (see
for example \cite[A.10]{Reid-chapters-on-surfaces}). Then the anti-canonical linear system $|-K_{W}|$ is generated by
(see \cite[2.4]{Reid-chapters-on-surfaces})
\begin{equation}
\label{equation:basis-0}
\begin{array}{c}
g_{1}x_{2}^{3}, \hspace{15pt} x_{1}x_{3}^{2},\hspace{15pt} g_{2}x_{1}x_{2}x_{3},\hspace{15pt} g_{4}x_{1}x_{2}^{2},\hspace{15pt}
g_{5}x_{1}^{2}x_{3},\hspace{15pt} g_{7}x_{1}^{2}x_{2},\hspace{15pt} g_{10}x_{1}^{3}
\end{array}
\end{equation}
where $x_{1}$, $x_{2}$, $x_{3}$ are projective coordinates on the fibre $L \simeq \mathbb{P}^{2}$, $g_{i}$ is a
homogeneous polynomial of degree $i$ on the projective coordinates $t_{0}$, $t_{1}$ on the base
$V \simeq \mathbb{P}^{1}$.
In particular, the base locus of $|-K_{W}|$ on $W$ is a curve $C_{0} \simeq \mathbb{P}^{1}$ given by equations
$x_{1}=x_{2}=0$.

In order to resolve indeterminacies of the map $\varphi_{\mathcal{L}_{W}}: W \dashrightarrow X$ we may set $t_{2}=x_{3}=1$,
$x_{1}=x$, $x_{2}=y$ and $t_{1}=z$.

Let $\sigma_{1}: W_{1} \longrightarrow W_{0}=W$ be the blow up of the curve $C_{0}$ with exceptional divisor $E_{1}$.
Threefold $W_{1}$ is covered by two affine charts $U_{i}^{(1)}$ each isomorphic to $\mathbb{C}^{3}$.
We will again denote by $x$, $y$, $z$ the coordinates on each chart $U_{i}^{(1)}$.

The map $U_{1}^{(1)} \longrightarrow W_{0}$ is given by $$(x, y, z) \mapsto (x,xy,z).$$
Then the linear system $\mathcal{L}_{W_{1}} = \sigma_{1*}^{-1}\mathcal{L}_{W}$  is generated by
$$
g_{1}y^{3}x^{2}, \hspace{15pt}
1,\hspace{15pt} g_{2}xy,\hspace{15pt} g_{4}x^{2}y^{2},\hspace{15pt}
g_{5}x,\hspace{15pt} g_{7}x^{2}y,\hspace{15pt} g_{10}x^{2}
$$
on $U_{1}^{(1)}$ which implies that $\mathcal{L}_{W_{1}}$ is free on $U_{1}^{(1)}$.

The map $U_{2}^{(1)} \longrightarrow W_{0}$ is given by $$(x, y, z) \mapsto (xy,y,z).$$
Then the linear system $\mathcal{L}_{W_{1}} = \sigma_{1*}^{-1}\left(\mathcal{L}_{W}\right)$ is generated by
\begin{equation}
\label{equation:basis-1}
\begin{array}{c}
g_{1}y^{2}, \hspace{15pt} x,\hspace{15pt} g_{2}xy,\hspace{15pt} g_{4}xy^{2},\hspace{15pt}
g_{5}x^{2}y,\hspace{15pt} g_{7}x^{2}y^{2},\hspace{15pt} g_{10}x^{3}y^{2}
\end{array}
\end{equation}
on $U_{2}^{(1)}$.
This implies that the base locus of $\mathcal{L}_{W_{1}}$ on $W_{1}$ is an irreducible rational curve $C_{1} \subset E_{1} \cap U_{2}^{(1)}$ given by equations $x=y=0$.

In order to resolve the indeterminacies of the map
$\varphi_{\mathcal{L}_{W_{1}}}: W_{1} \dashrightarrow X$
we may set $W_{1} = U_{2}^{(1)}$ with coordinates $x$, $y$, $z$. Then \eqref{equation:basis-1} is the set of generators of
the linear system $\mathcal{L}_{W_{1}}$.

Let $\sigma_{2}: W_{2} \longrightarrow W_{1}$ be the blow up of the curve $C_{1}$ with exceptional divisor $E_{2}$.
Threefold $W_{2}$ is covered by two affine charts $U_{i}^{(2)}$ each isomorphic to $\mathbb{C}^{3}$.
We will again denote by $x$, $y$, $z$ the coordinates on each chart $U_{i}^{(2)}$.

The map $U_{1}^{(2)} \longrightarrow W_{1}$ is given by $$(x, y, z) \mapsto (x,xy,z)$$ and again the linear system
$\mathcal{L}_{W_{2}} = \sigma_{2*}^{-1}\left(\mathcal{L}_{W_{1}}\right)$ is free on $U_{1}^{(2)}$.

The map $U_{2}^{(2)} \longrightarrow W_{1}$ is given by $$(x, y,
z) \mapsto (xy,y,z).$$ Then the linear system
$\mathcal{L}_{W_{2}}=
\sigma_{2*}^{-1}\left(\mathcal{L}_{W_{1}}\right)$ is generated by
\begin{equation}
\label{equation:basis-2}
\begin{array}{c}
g_{1}y, \hspace{15pt} x,\hspace{15pt} g_{2}xy,\hspace{15pt} g_{4}xy^{2},\hspace{15pt}
g_{5}x^{2}y^{2},\hspace{15pt} g_{7}x^{2}y^{3},\hspace{15pt} g_{10}x^{3}y^{4}
\end{array}
\end{equation}
on $U_{2}^{(2)}$.
This implies that the base locus of $\mathcal{L}_{W_{2}}$ on $W_{2}$ is an irreducible rational curve $C_{2} \subset E_{2} \cap U_{2}^{(2)}$ given by equations $x=y=0$.

Finally, in order to resolve the indeterminacies of the map $\varphi_{\mathcal{L}_{W_{2}}}: W_{2} \dashrightarrow X$
we may set $W_{2} = U_{2}^{(2)}$ with coordinates $x$, $y$, $z$. Then \eqref{equation:basis-2} is the set of generators of
the linear system $\mathcal{L}_{W_{2}}$.

Let $\sigma_{3}: W_{3} \longrightarrow W_{2}$ be the blow up of the curve $C_{2}$ with exceptional divisor $E_{3}$.
Threefold $W_{3}$ is covered by two affine charts $U_{i}^{(3)}$ each isomorphic to $\mathbb{C}^{3}$.
We will again denote by $x$, $y$, $z$ the coordinates on each chart $U_{i}^{(3)}$.

The map $U_{1}^{(3)} \longrightarrow W_{2}$ is given by $$(x, y, z) \mapsto (x,xy,z)$$ and again the linear system
$\mathcal{L}_{W_{3}} = \sigma_{3*}^{-1}\left(\mathcal{L}_{W_{2}}\right)$ is free on $U_{1}^{(3)}$.

The map $U_{2}^{(3)} \longrightarrow W_{2}$ is given by $$(x, y, z) \mapsto (xy,y,z).$$
Then the linear system $\mathcal{L}_{W_{3}}$ is generated by
$$
g_{1}, \hspace{15pt} x,\hspace{15pt} g_{2}xy,\hspace{15pt} g_{4}xy^{2},\hspace{15pt}
g_{5}x^{2}y^{3},\hspace{15pt} g_{7}x^{2}y^{4},\hspace{15pt} g_{10}x^{3}y^{6}
$$
on $U_{2}^{(3)}$. This implies that the linear system
$\mathcal{L}_{W_{3}}=
\sigma_{3*}^{-1}\left(\mathcal{L}_{W_{2}}\right)$ is free on
$U_{2}^{(3)}$ and hence it is free on $W_{3}$.

\begin{lemma}
\label{theorem:weak-Fano}
In the above notation, $W_{3}$ is a non-singular weak Fano threefold such that the morphism $\varphi_{\mathcal{L}_{W_{3}}}$ is crepant and the image $X = \varphi_{\mathcal{L}_{W_{3}}}(W_{3})$ is
a Fano threefold with canonical Gorenstein singularities and degree $66$.
\end{lemma}

\begin{proof}
In the above notation, for $1 \leqslant i \leqslant 3$ we have
$$
K_{W_{i}} = \sigma_{i}^{*}K_{W_{i-1}} + E_{i}
$$
and general element in $\mathcal{L}_{W_{i-1}}$ is non-singular along the base curve $C_{i-1}$ (see \eqref{equation:basis-0}--\eqref{equation:basis-2}).
Since $\mathcal{L}_{W_{0}} = |-K_{W}|$, this implies that $\mathcal{L}_{W_{i}} \subset |-K_{W_{i}}|$ for all
$1 \leqslant i \leqslant 3$. In particular, divisor $-K_{W_{3}}$ is nef because the linear system $\mathcal{L}_{W_{3}}$ is free.

Furthermore, the image $\varphi_{|-K_{W}|}(W) = \varphi_{\mathcal{L}_{W_{3}}}(W_{3})$ is three-dimensional because $-K_{W}$ is big. This implies that divisor $-K_{W_{3}}$ is also big.

Thus, $W_{3}$ is a weak Fano threefold and morphism $\varphi_{\mathcal{L}_{W_{3}}}$ is crepant. Moreover, by construction
$W_{3}$ is non-singular. Then $X = \varphi_{\mathcal{L}_{W_{3}}}(W_{3})$ is a Fano threefold with canonical Gorenstein
singularities (see \cite{Kawamata}).

Finally, we have $X = X_{2g-2} \subset \mathbb{P}^{g+1}$ for the genus $g$ of $X$ equal $34$ because $X = \varphi_{|-K_{W}|}(W)$ and $\dim |-K_{W}| = 35$.
\end{proof}

Proposition~\ref{theorem:T-variety} is completely proved.
\end{proof}

\begin{remark}
\label{remark:T-variety} Obviously, Fano threefold $X$ from
Proposition~\ref{theorem:T-variety} is toric. From the precise
view of the fan of $X$ in the list obtained in
\cite{Kreuzer-Skarke} it is straightforward that $X$ is
$\mathbb{Q}$-factorial and $\rho(X) = 2$. We will denote this $X$
by $X_{66}$.
\end{remark}

\begin{proposition}
\label{theorem:singularities-of-T-variety}
In the above notation, if $X = X_{66}$, then singularities of  $X$ are non-$\mathrm{cDV}$.
\end{proposition}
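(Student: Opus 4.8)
The plan is to locate a non-$\mathrm{cDV}$ point of $X$ directly from the crepant resolution built in the proof of Proposition~\ref{theorem:T-variety}, and then to apply the discrepancy criterion of Proposition~\ref{theorem:Reid-CDV}. Recall that there we produced a birational \emph{crepant} morphism $\tau := \varphi_{\mathcal{L}_{W_{3}}} : W_{3} \longrightarrow X$ with $W_{3}$ non-singular, where $W_{3}$ is obtained from $W$ by the three blow-ups $\sigma_{1}, \sigma_{2}, \sigma_{3}$ of the curves $C_{0}, C_{1}, C_{2}$. Crepancy means $K_{W_{3}} = \tau^{*}K_{X}$, so every $\tau$-exceptional prime divisor has discrepancy $0$ over $X$. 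By Proposition~\ref{theorem:Reid-CDV} a $\mathrm{cDV}$ point carries no prime divisor of non-positive discrepancy whose centre is that point; hence it will be enough to produce one $\tau$-exceptional divisor whose image on $X$ is a single point.

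I would take this divisor to be the strict transform $\widetilde{E}_{1} \subset W_{3}$ of the exceptional divisor $E_{1}$ of $\sigma_{1}$ (the blow-up of the base curve $C_{0}$ of $|-K_{W}|$). In the chart $U_{2}^{(1)}$ one has $E_{1} = \{y = 0\}$, and the explicit generators \eqref{equation:basis-1} of $\mathcal{L}_{W_{1}}$ show that on $E_{1}$ every generator except the one corresponding to the section $x_{1}x_{3}^{2}$ vanishes identically. Consequently $\varphi_{\mathcal{L}_{W_{1}}}$ maps $E_{1}$ to the single point
$$ o := [0 : 1 : 0 : \cdots : 0] \in X \subset \mathbb{P}^{35}, $$
namely the point at which all anti-canonical coordinates except $x_{1}x_{3}^{2}$ vanish. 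Since $\sigma_{2}$ and $\sigma_{3}$ only alter $E_{1}$ along the curve $C_{1} \subset E_{1}$, the strict transform $\widetilde{E}_{1}$ remains a prime divisor birational to $E_{1}$, and therefore $\tau(\widetilde{E}_{1}) = \{o\}$ as well.

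Thus $\widetilde{E}_{1}$ is a $\tau$-exceptional prime divisor whose centre on $X$ is the point $o$ and whose discrepancy, by crepancy, equals $0$. Proposition~\ref{theorem:Reid-CDV} then shows that $o$ is a non-$\mathrm{cDV}$ point of $X$, which is exactly what is claimed. I expect the only delicate step to be the chart bookkeeping of the second paragraph: one must be sure that $\tau$ really contracts the \emph{divisor} $\widetilde{E}_{1}$, and not merely a curve inside it, to the point $o$ --- equivalently, that the lone surviving coordinate $x_{1}x_{3}^{2}$ does not acquire further common factors after $\sigma_{2}$ and $\sigma_{3}$; this is guaranteed by tracking the bases \eqref{equation:basis-1} and \eqref{equation:basis-2} through the remaining blow-ups exactly as in the proof of Proposition~\ref{theorem:T-variety}. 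Alternatively, one could compute a general hyperplane section of $X$ through $o$ and invoke Proposition~\ref{theorem:inductive-CDV}, but the discrepancy route avoids resolving the surface singularity.
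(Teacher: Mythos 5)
Your argument is correct, and it rests on the same two pillars as the paper's own proof: the crepancy of $\tau=\varphi_{\mathcal{L}_{W_{3}}}$ forces every $\tau$-exceptional prime divisor to have discrepancy $0$ over $X$, and Proposition~\ref{theorem:Reid-CDV} then forbids the centre of such a divisor from being a $\mathrm{cDV}$ point. Where you differ is in the choice of divisor and in how the contraction to a point is certified. The paper takes $E'_{2}$, the proper transform of the \emph{second} exceptional divisor, and shows it is crushed to a point by two intersection-number computations (Lemmas~\ref{theorem:zero-intersection} and \ref{theorem:restriction-is-contracted}: $K_{W_{3}}\cdot L'_{t}=0$ for the fibres of $E_{2}$ and $K_{W_{3}}\cdot E'_{2}\cdot E_{3}=0$ for the remaining horizontal curve). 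You instead take the strict transform of $E_{1}$ and read off directly from \eqref{equation:basis-1} that on $E_{1}$ every generator except the one coming from $x_{1}x_{3}^{2}$ vanishes identically (all other monomials in \eqref{equation:basis-0} vanish to order at least $2$ along $C_{0}$, while $x_{1}x_{3}^{2}$ vanishes to order exactly $1$), so $\varphi_{\mathcal{L}_{W_{1}}}$ is constant on $E_{1}\setminus C_{1}$. The ``delicate step'' you flag is genuinely the only thing to check, and it does go through: in the chart $U_{2}^{(2)}$ the total transform of $E_{1}$ coincides with $E_{2}$, so the proper transform $\widetilde{E}_{1}$ lives entirely in $U_{1}^{(2)}$ and is disjoint from $C_{2}$, hence untouched by $\sigma_{3}$ and disjoint from $E_{3}$; since $\tau$ and $\varphi_{\mathcal{L}_{W_{1}}}\circ\sigma_{2}\circ\sigma_{3}$ agree on the dense open subset of $\widetilde{E}_{1}$ away from $E_{2}$, the morphism $\tau$ contracts all of $\widetilde{E}_{1}$ to the single point $[0:1:0:\cdots:0]$. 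Your coordinate computation is arguably more transparent than the paper's intersection-theoretic one, at the price of the chart bookkeeping; the paper's version has the mild advantage of not depending on which affine chart of the $\mathbb{P}^{2}$-bundle one works in. Either way the conclusion is the same non-$\mathrm{cDV}$ point, and the proof is complete.
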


\begin{proof}
We use the notation from the proof of Proposition~\ref{theorem:T-variety}. From Lemma~\ref{theorem:weak-Fano} we deduce
that $\varphi_{\mathcal{L}_{W_{3}}}: W_{3} \longrightarrow X$ is a terminal $\mathbb{Q}$-factorial modification of $X$. In particular, we have $\mathcal{L}_{W_{3}} = |-K_{W_{3}}|$.

Let $L_{t} \subset E_{2}$ be the fibre $\sigma_{2}^{-1}(t)$, $t\in\mathbb{P}^{1}$, and $L'_{t}$ be its proper transform on $W_{3}$.

\begin{lemma}
\label{theorem:zero-intersection}
On $W_{3}$ we have $K_{W_{3}} \cdot L'_{t} = 0$.
\end{lemma}

\begin{proof}
Set $E_{1}^{*}: = (\sigma_{2} \circ\sigma_{3})^{*}(E_{1})$ and $E_{2}^{*}: = \sigma_{3}^{*}(E_{2})$. On $W_{3}$ we have
$$
K_{W_{3}} = (\sigma_{1} \circ \sigma_{2}\circ\sigma_{3})^{*}(K_{W})+E_{1}^{*}+E_{2}^{*}+E_{3}.
$$

Furthermore, on $W_{3}$ we have
$$
E_{3} \cdot L'_{t} = 1, \hspace{15pt}  E_{2}^{*} \cdot L'_{t} = E_{2} \cdot L_{t} = -1, \hspace{15pt} E_{1}^{*} \cdot L'_{t} = \sigma_{2}^{*}(E_{1}) \cdot L_{t} = 0.
$$
Thus, we get $K_{W_{3}} \cdot L'_{t} = 0$ on $W_{3}$.
\end{proof}

Let $E'_{2}$ be the proper transform of the surface $E_{2}$ on $W_{3}$.

\begin{lemma}
\label{theorem:restriction-is-contracted}
On $W_{3}$ we have $K_{W_{3}} \cdot E'_{2} \cdot E_{3} = 0$.
\end{lemma}

\begin{proof}
In the notation from the proof of Proposition~\ref{theorem:T-variety},
let $\sigma_{3}: W_{3} \longrightarrow W_{2}$ be the blow up of the curve $C_{2}$ with exceptional divisor $E_{3}$.
Without loss of generality we may set $W_{2} = U_{2}^{(2)}$ with coordinates $x$, $y$, $z$. Then
the threefold $W_{3}$ is covered by two affine charts $U_{i}^{(3)}$ each isomorphic to $\mathbb{C}^{3}$.
We will again denote by $x$, $y$, $z$ the coordinates on each chart $U_{i}^{(3)}$.

The map $U_{1}^{(3)} \longrightarrow W_{2}$ is given by
$$
(x, y, z) \mapsto (x,xy,z).
$$
Then general element $S \in\mathcal{L}_{W_{3}} = |-K_{W_{3}}| = \sigma_{3*}^{-1}\left(\mathcal{L}_{W_{2}}\right)$
is given by the equation
$$
g_{1}y + g_{0} + g_{2}xy + g_{4}x^{2}y^{2}+g_{5}x^{3}y^{2}+g_{7}x^{4}y^{3}+g_{10}x^{6}y^{4}=0
$$
on $U_{1}^{(3)}$ where $g_{0} \in \mathbb{C}$ (see \eqref{equation:basis-2}).
On the other hand, the equation of $E_{3}$ on $U_{1}^{(3)}$ is $x=0$ and the equation of $E_{2}$ on $W_{2}$ is $y=0$. This
implies that
$$
S \cap E'_{2} \cap E_{3} = \emptyset
$$
on $U_{1}^{(3)}$.

Furthermore, the map $U_{2}^{(3)} \longrightarrow W_{2}$ is given by
$$
(x, y, z) \mapsto (xy,y,z)
$$
Then $E'_{2} \cap U_{2}^{(3)} \cap E_{3} = \emptyset$ because the equation of $E_{3}$ on $U_{2}^{(3)}$ is $y=0$ and the equation
of $E_{2}$ on $W_{2}$ is $y=0$. This implies that for general element $S \in\mathcal{L}_{W_{3}} = |-K_{W_{3}}|$
$$
S \cap E'_{2} \cap E_{3} = \emptyset
$$
on $U_{2}^{(3)}$.
Thus, we get $K_{W_{3}} \cdot E'_{2} \cdot E_{3}=0$ on $W_{3}$.
\end{proof}

Lemmas~\ref{theorem:zero-intersection} and \ref{theorem:restriction-is-contracted} imply that divisor $E'_{2}$ is
$\varphi_{\mathcal{L}_{W_{3}}}$-exceptional and $o:=\varphi_{\mathcal{L}_{W_{3}}}(E'_{2})$ is a point on $X$. Moreover,
since the discrepancy $a(E'_{2}, X)$ is zero, $o \in X$ is a non-$\mathrm{cDV}$ singular point by Theorem~\ref{theorem:Reid-CDV}.
Proposition~\ref{theorem:singularities-of-T-variety} is completely proved.
\end{proof}

Now we turn to the second case in Lemma~\ref{theorem:special-scrolls}.

\begin{proposition}
\label{theorem:C-variety-from-scroll} In the assumptions of
Lemma~\ref{theorem:special-scrolls}, if $W =
\mathrm{Proj}\left(\mathcal{O}_{\mathbb{P}^{1}}(6)\oplus\mathcal{O}_{\mathbb{P}^{1}}(2)\oplus\mathcal{O}_{\mathbb{P}^{1}}\right)$,
then $-K_{X}^{3} = 70$ and $X = X_{70}$ is that constructed in
Proposition~\ref{theorem:contraction-to-curve}.
\end{proposition}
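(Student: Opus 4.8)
The plan is to treat this case in complete parallel with the proof of Proposition~\ref{theorem:T-variety}, performing an explicit sequence of blow-ups that resolves the indeterminacies of the map $\varphi_{\mathcal{L}_{W}}: W \dashrightarrow X$ and identifies $X$ with $X_{70}$. First I would record that for $W = \mathrm{Proj}\left(\mathcal{O}_{\mathbb{P}^{1}}(6)\oplus\mathcal{O}_{\mathbb{P}^{1}}(2)\oplus\mathcal{O}_{\mathbb{P}^{1}}\right)$ one has $\dim|-K_{W}| = 36$, so that Remark~\ref{remark:inequalities} forces the equality $|-K_{W}| = \mathcal{L}_{W}$, whence $-K_{X}^{3} = 70$ and $X$ is the anti-canonical image of $W$. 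As in Proposition~\ref{theorem:T-variety}, I would write $-K_{W} \sim 3M - 6L$ with $M$ the tautological bundle and $L$ a fibre, list an explicit set of generators $g_{i}x_{1}^{a}x_{2}^{b}x_{3}^{c}$ of $|-K_{W}|$ in the coordinates $x_{1},x_{2},x_{3}$ on the $\mathbb{P}^{2}$-fibre and $t_{0},t_{1}$ on the base, and read off the base locus of $|-K_{W}|$ as a curve $C_{0}\simeq\mathbb{P}^{1}$ in the fibre coordinates.

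Next I would resolve the base locus by a chain of blow-ups $\sigma_{i}: W_{i}\to W_{i-1}$ of the successive base curves $C_{i-1}$, working chart by chart in the affine pieces $U_{j}^{(i)}\cong\mathbb{C}^{3}$ exactly as in the degree-$66$ case, pulling back the generators under the monomial maps $(x,y,z)\mapsto(x,xy,z)$ and $(x,y,z)\mapsto(xy,y,z)$ and checking in each step where the transformed linear system $\mathcal{L}_{W_{i}}$ becomes free. Because the relevant exponent here is $6$ rather than $5$, I expect the chain to be one step longer, terminating at some $W_{k}$ on which $\mathcal{L}_{W_{k}}$ is free; the analogue of Lemma~\ref{theorem:weak-Fano} then shows (using $K_{W_{i}}=\sigma_{i}^{*}K_{W_{i-1}}+E_{i}$ together with the non-singularity of the general member of $\mathcal{L}_{W_{i-1}}$ along each base curve) that $\mathcal{L}_{W_{i}}\subset|-K_{W_{i}}|$, hence $-K_{W_{k}}$ is nef and big, so $W_{k}$ is a non-singular weak Fano threefold, $\varphi_{\mathcal{L}_{W_{k}}}$ is crepant, and $X=\varphi_{\mathcal{L}_{W_{k}}}(W_{k})$ is a Fano threefold with canonical Gorenstein singularities of degree $70$.

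Finally I would identify this $X$ with the already-constructed $X_{70}$. Having produced a \emph{non-singular} terminal $\mathbb{Q}$-factorial modification $W_{k}$ of $X$, I invoke Lemma~\ref{theorem:smooth-terminal-modification-C} and the uniqueness asserted in Remark~\ref{remark:rank-of-Pic}: since $X$ is a degree-$70$ Fano threefold with canonical Gorenstein singularities admitting a smooth terminal $\mathbb{Q}$-factorial modification, and $X_{70}$ is the unique such threefold up to isomorphism (arising from the birational projection of $\mathbb{P}(6,4,1,1)$ in Proposition~\ref{theorem:contraction-to-curve}), it follows that $X = X_{70}$. The main obstacle I anticipate is purely computational: the resolution chain is longer than in the degree-$66$ case, so bookkeeping the monomial generators through each chart and correctly locating the new base curve $C_{i}$ at each stage requires care; the conceptual steps (crepancy, nef-and-bigness, and the uniqueness of $X_{70}$) are all already in hand.
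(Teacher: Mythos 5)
Your opening step is based on a miscalculation, and it derails the whole plan. For $W=\mathrm{Proj}\left(\mathcal{O}_{\mathbb{P}^{1}}(6)\oplus\mathcal{O}_{\mathbb{P}^{1}}(2)\oplus\mathcal{O}_{\mathbb{P}^{1}}\right)$ one has $-K_{W}\sim 3M-6L$ and $h^{0}(-K_{W})=h^{0}(\mathbb{P}^{1},S^{3}\mathcal{E}\otimes\mathcal{O}(-6))=13+9+7+5+3+1+1=39$, so $\dim|-K_{W}|=38$, not $36$. Consequently Remark~\ref{remark:inequalities} does \emph{not} force $\mathcal{L}_{W}=|-K_{W}|$: the linear system $\mathcal{L}_{W}$ giving the map to $X$ has dimension in $\{35,36,37\}$ and is a \emph{proper} subsystem of $|-K_{W}|$ of codimension $1$, $2$ or $3$. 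This is exactly why the paper proceeds differently: the anti-canonical image $\varphi_{|-K_{W}|}(W)$ is $\mathbb{P}(6,4,1,1)\subset\mathbb{P}^{38}$ (of degree $72$), and $X$ is its image under a birational projection from a point, a line or a plane. Your plan of resolving the base locus of $|-K_{W}|$ by explicit blow-ups would only reconstruct $\mathbb{P}(6,4,1,1)$, not a threefold of degree $70$, and in any case you cannot write down explicit generators of $\mathcal{L}_{W}$ without first knowing the projection center.

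The real content of the paper's proof is therefore entirely missing from your proposal: one must classify the possible centers. The paper shows that projection from a smooth point gives odd degree $71$, projection from either non-$\mathrm{cDV}$ point $P$, $Q$ is excluded by a Kodaira-dimension argument on hyperplane sections (a $\mathrm{K3}$ member of $|-K_{X}|$ versus $\kappa(S)<0$ for sections through a non-$\mathrm{cDV}$ point), projection from a $\mathrm{cA}_{1}$ point of the singular line yields $X_{70}$ via Lemmas~\ref{theorem:non-simple-case}--\ref{theorem:isomorphism-to-projection}, and projections from lines and planes are excluded case by case (lines in $\mathbb{P}(6,4,1,1)$ necessarily meet the singular locus, secant lines through smooth points give degree $72$, conics in a plane section pass through singular points, etc.). None of this is replaced by anything in your argument. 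Your closing appeal to a uniqueness statement (``$X_{70}$ is the unique degree-$70$ Fano threefold admitting a smooth terminal modification'') is also not something established in the paper --- Remark~\ref{remark:rank-of-Pic} asserts uniqueness of the threefold obtained by the specific projection construction, not a characterization among all degree-$70$ Fano threefolds --- so even the final identification step would need to be argued as in the paper rather than quoted.
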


\begin{proof}
For $W =
\mathrm{Proj}\left(\mathcal{O}_{\mathbb{P}^{1}}(6)\oplus\mathcal{O}_{\mathbb{P}^{1}}(2)\oplus\mathcal{O}_{\mathbb{P}^{1}}\right)$
we have $\varphi_{|-K_{W}|}(W) = \mathbb{P}(6,4,1,1)$ (see
\cite[Ch. 4, Remark 4.2]{Iskovskikh-anti-canonical-models}). Then
from Remark~\ref{remark:linear-systems} and the assumption
$\dim\mathcal{L}_{W} \in\{35, 36, 37\}$ we deduce that the initial
Fano threefold $X$ must be an image of $\mathbb{P}(6,4,1,1)
\subset \mathbb{P}^{38}$ under the birational projection from a
point, a line or a plane. Let $\pi: \mathbb{P}(6,4,1,1)
\dashrightarrow X$ be this projection.

\begin{lemma}
\label{theorem:projection-from-point}
If $\pi$ is the projection from a point, then $X = X_{70}$ is that constructed in Proposition~\ref{theorem:contraction-to-curve}.
\end{lemma}

\begin{proof}
Let $O$ be the center of the projection $\pi$. Then $O$ belongs to
$\mathbb{P}= \mathbb{P}(6,4,1,1)$ because otherwise $\pi$ is an
isomorphism which is impossible because $\mathbb{P} \subset
\mathbb{P}^{38}$ is not degenerate. Moreover, if $O$ is a smooth
point on $\mathbb{P}$, then $-K_{X}^{3}=71$ which is also
impossible.

In the notation of Example~\ref{example:examp-2}, if $O$ is
distinct from $P$ and $Q$, then $O\in\mathbb{P}$ is a
$\mathrm{cA_{1}}$ point. By Lemma~\ref{theorem:non-simple-case} we
get $X = X_{70}$.

Now, if $O$ is either $P$ or $Q$, then general hyperplane section
$S$ of $\mathbb{P}$ through $O$ is a $\mathrm{K3}$ surface with
singularities worse than Du Val. Indeed, otherwise either $P$ or
$Q$ is a $\mathrm{cDV}$ point which is impossible (see
Proposition~\ref{theorem:inductive-CDV},
Example~\ref{example:examp-2} and Theorem~\ref{theorem:Reid-CDV}).
In particular, we have $\kappa(S)<0$ for the Kodaira dimension of
the surface $S$.

On the other hand, the projection $\pi$ induces a birational map $S  \dashrightarrow S'$ on general surface
$S'\in |-K_{X}|$. This implies that $\kappa(S) = \kappa(S')$. But $S'$ is a $\mathrm{K3}$
surface with canonical singularities by
Theorem~\ref{theorem:elefant}. Thus, we get $0 = \kappa(S') = \kappa(S) < 0$, a contradiction.
\end{proof}

By Lemma~\ref{theorem:projection-from-point} it remains to consider the cases when $\pi$ is either the projection from some line $\gamma$ or some
plane $\Omega$. We will show that in each of these cases $-K_{X}^{3} \notin \{66,68,70\}$.

Let us consider the projection from line $\gamma$ first. Since
$\mathbb{P}=\mathbb{P}(6,4,1,1)$ is an intersection of quadrics
(see Proposition~\ref{theorem:free-antican-system-2}), its
intersection with $\gamma$ is either a set of $\leqslant 2$ points
or the whole $\gamma$. Note that $\gamma \cap \mathbb{P} \ne
\emptyset$ because $\mathbb{P} \subset \mathbb{P}^{38}$ is not
degenerate.

\begin{lemma}
\label{theorem:projection-from-line-1}
The line $\gamma$ is not contained in $\mathbb{P}$.
\end{lemma}

\begin{proof}
Suppose that $\gamma \subset \mathbb{P}$. Then we have $-K_{\mathbb{P}} \cdot \gamma = 1$. On the other hand, on
$\mathbb{P}$ we have $-K_{\mathbb{P}} \sim \mathcal{O}_{\mathbb{P}}(12)$ (see \cite{Iano-Fletcher}). Hence
$$
\mathcal{O}_{\mathbb{P}}(1)\cdot \gamma = \frac{1}{12}
$$
which implies that $\gamma$ passes through a singular point on
$\mathbb{P}$.

If $\gamma$ contains a $\mathrm{cDV}$ point, then by
Lemma~\ref{theorem:unique-line} $\gamma$ coincides with the
singular locus of $\mathbb{P}$. As in the proof of
Lemma~\ref{theorem:projection-from-point}, for general hyperplane
section $S$ of $\mathbb{P}$ through $\gamma$ we have $\kappa(S) <
0$ and for its birational image $S'$ on $X$ we have $\kappa(S') =
0$, a contradiction. In the same way we obtain a contradiction
when
$\gamma$ contains one of the non-$\mathrm{cDV}$ points of
$\mathbb{P}$.
\end{proof}

Thus, $\gamma$ intersects $\mathbb{P}$ at $\leqslant 2$ points.

\begin{lemma}
\label{theorem:projection-from-line-2}
In the above assumptions, $\gamma$ contains only smooth points of $\mathbb{P}$.
\end{lemma}

\begin{proof}
In the notation of Example~\ref{example:examp-2}, if $\gamma$
contains a singular point on $\mathbb{P}$ distinct from $P$ and
$Q$, then by Lemma~\ref{theorem:non-simple-case} $X$ is an image
under the birational projection of the threefold $X_{70}$ from
some point $O$ on $X_{70}$.

If $O$ is a smooth point, then $-K_{X}^{3} = 69$ which is
impossible.

Thus, by Proposition~\ref{theorem:singularities-of-X} $O \in X_{70}$ is the unique non-$\mathrm{cDV}$ point.
Now, as in the proof of Lemma~\ref{theorem:projection-from-point},
for general hyperplane section $S$ of $X_{70}$ through $O$ we have $\kappa(S) < 0$ and for its birational image $S'$ on $X$ we have $\kappa(S') = 0$.

The obtained contradiction implies that $\gamma$ contains only $P$ and $Q$. Again, as in the proof of Lemma~\ref{theorem:projection-from-point},
for general hyperplane section $S$ of $\mathbb{P}$ through $\gamma$ we have $\kappa(S) < 0$ and for its birational image $S'$ on $X$ we have $\kappa(S') = 0$, a contradiction.
\end{proof}

Thus, $\gamma$ intersects $\mathbb{P}$ at $\leqslant 2$ smooth points.
\begin{lemma}
\label{theorem:projection-from-line-3}
In the above assumptions, we have $-K_{X}^{3} = 72$.
\end{lemma}

\begin{proof}
Let $S$ be a general hyperplane section of $\mathbb{P}$ through $\gamma$. Then $S$ is a non-singular $\mathrm{K3}$ surface
because $\gamma \cap \mathbb{P}$ consists of $\leqslant 2$ smooth points.
Furthermore, $\pi$ induces a birational map $\chi: S \dashrightarrow S'$ on general surface $S'\in |-K_{X}|$ having
canonical singularities (see Theorem~\ref{theorem:elefant}). This implies that $S$ is the minimal resolution of $S'$ and
$\chi$ is a regular morphism.

Projection $\pi$ is given by the linear system $\mathcal{L}$ of
all hyperplane sections of $\mathbb{P}$ through $\gamma$. Set
$\mathcal{L}_{S}: = \mathcal{L}|_{S}$ and $\mathcal{L}_{S'}: =
|-K_{X}||_{S'}$. Then we have $\mathcal{L}_{S} =
\chi^{*}\mathcal{L}_{S'}$ and in particular $(L)^{2} = (L')^{2}$
for $L\in\mathcal{L}_{S}$ and $L'\in\mathcal{L}_{S'}$.

On the other hand, we have $(L')^{2} = -K_{X}^{3}$ and $(L)^{2} = -K_{\mathbb{P}}^{3}$. Thus, we obtain
$$
-K_{X}^{3}=(L')^{2}=(L)^{2}=72.
$$
\end{proof}

From Lemma~\ref{theorem:projection-from-line-3} we get a
contradiction with $-K_{X}^{3}\in\{66,68,70\}$. Now we turn to the
final case when $\pi$ is the projection from plane $\Omega$.

\begin{lemma}
\label{theorem:projection-from-plane-1} The plane $\Omega$ is not
contained in $\mathbb{P}(6,4,1,1)$.
\end{lemma}

\begin{proof}
Suppose that $\Pi \subset \mathbb{P}(6,4,1,1)$. Then, as in the
proof of Lemma~\ref{theorem:projection-from-line-1}, any line on
$\Pi$ passes through a singular point on $\mathbb{P}(6,4,1,1)$.
Now repeating word by word the arguments from the proof of
Lemma~\ref{theorem:projection-from-line-1} we obtain a
contradiction.
\end{proof}

Since $\mathbb{P} = \mathbb{P}(6,4,1,1)$ is an intersection of
quadrics, by Lemma~\ref{theorem:projection-from-plane-1} the
intersection of $\mathbb{P}$ with $\Omega$ is either a set of
$\leqslant 4$ points or a (non-reduced, reduced or irreducible)
conic. Again we note that $\Omega \cap \mathbb{P} \ne \emptyset$
because $\mathbb{P} \subset \mathbb{P}^{38}$ is not degenerate.

\begin{lemma}
\label{theorem:projection-from-plane-01} If $\Omega \cap
\mathbb{P}$ is a finite set, then it contains only smooth points
of $\mathbb{P}$.
\end{lemma}

\begin{proof}
In the notation of Example~\ref{example:examp-2}, if $\Omega$
contains a singular point on $\mathbb{P}$ distinct from $P$ and
$Q$, then by Lemma~\ref{theorem:non-simple-case} $X$ is an image
under the birational projection of the threefold $X_{70}$ from
some line $\gamma'$ which is not contained in $X_{70}$ and
$\gamma' \cap X_{70} \ne \emptyset$.

If $\gamma'$ does not pass through the singular point on $X_{70}$,
then repeating word by word the arguments from the proof of
Lemma~\ref{theorem:projection-from-line-3} we obtain that
$-K_{X}^{3} = -K_{X_{70}}^{3}$. On the other hand, we must have
$-K_{X}^{3} < -K_{X_{70}}^{3}$, since the degree must decrease, a
contradiction.

Thus, $\gamma'$ contains unique singular point $O$ on $X_{70}$
(see Proposition~\ref{theorem:singularities-of-X}). As in the
proof of Lemma~\ref{theorem:projection-from-point}, for general
hyperplane section $S$ of $X_{70}$ through $O$ we have $\kappa(S)
< 0$ and for its birational image $S'$ on $X$ we have $\kappa(S')
= 0$.

The obtained contradiction implies that $\Omega$ contains only $P$
and $Q$. Again, as in the proof of
Lemma~\ref{theorem:projection-from-point}, for general hyperplane
section $S$ of $\mathbb{P}$ through $\Omega$ we have $\kappa(S) <
0$ and for its birational image $S'$ on $X$ we have $\kappa(S') =
0$, a contradiction.
\end{proof}

Thus, if $\Omega$ intersects $\mathbb{P}$ by $\leqslant 4$ points,
then by Lemma~\ref{theorem:projection-from-plane-01} and the
arguments from the proof of
Lemma~\ref{theorem:projection-from-line-3} we obtain that
$-K_{X}^{3} = 72$ which is a contradiction with
$-K_{X}^{3}\in\{66,68,70\}$.

Suppose that $\Omega$ intersects $\mathbb{P}$ by some conic $C$.

\begin{lemma}
\label{theorem:projection-from-plane-2}
In the above notation, the set $C \cap \mathrm{Sing}(\mathbb{P})$ is non-empty and consists of $\mathrm{cA_{1}}$ points.
\end{lemma}

\begin{proof}
Since $-K_{\mathbb{P}}\cdot C = 2$ and $-K_{\mathbb{P}}\sim\mathcal{O}_{\mathbb{P}}(12)$, on $\mathbb{P}$ we have
$$
\mathcal{O}_{\mathbb{P}}(1)\cdot C = \frac{1}{6}.
$$
This implies that $C$ passes through a singular point on
$\mathbb{P}$.

Furthermore, in the notation of Example~\ref{example:examp-2}, if $C$ contains either $P$ or $Q$, then as in the proof of
Lemma~\ref{theorem:projection-from-point}, for general hyperplane section $S$ of $\mathbb{P}$ through $\Omega$ we have
$\kappa(S) < 0$ and for its birational image $S'$ on $X$ we have $\kappa(S') = 0$, a contradiction.
\end{proof}

From Lemmas~\ref{theorem:projection-from-plane-2} and
\ref{theorem:non-simple-case} we deduce that $X$ is an image under
the birational projection of the threefold $X_{70}$ from some line
$\gamma'$ on $X_{70}$.

\begin{lemma}
\label{theorem:projection-from-plane-3} In the above notation, the
line $\gamma'$ passes through the singular point on $X_{70}$.
\end{lemma}

\begin{proof}
If $\gamma'$ does not pass through the singular point on $X_{70}$,
then by construction $X_{70}$ is isomorphic to $\mathbb{P}$ near
$\gamma'$. This implies that there is a line on $\mathbb{P}$ not
passing through the singular points which is impossible (see the
proof of Lemma~\ref{theorem:projection-from-line-1}).
\end{proof}

Thus, $\gamma'$ contains unique singular point $O$ on $X_{70}$
(see Proposition~\ref{theorem:singularities-of-X}). Then, as in
the proof of Lemma~\ref{theorem:projection-from-point}, for
general hyperplane section $S$ of $X_{70}$ through $O$ we have
$\kappa(S) < 0$ and for its birational image $S'$ on $X$ we have
$\kappa(S') = 0$, a contradiction.

Proposition~\ref{theorem:C-variety-from-scroll} is completely
proved.
\end{proof}

Now, for $W_{\eta} \simeq \mathbb{P}^{2}$ and $H|_{W_{\eta}}
\simeq \mathcal{O}_{\mathbb{P}^{2}}(2)$ according to Proposition
8.7 in \cite{Prokhorov-degree} \emph{there is a
$(K_{W}+\mathcal{L}_{W})$-crepant birational map $W
\dashrightarrow W_{0}$ onto a $\mathbb{P}^{2}$-bundle $W_{0}$ over
$V$}. Then the pair $(W_{0}, \mathcal{L}_{0})$ is canonical and
the linear system $\mathcal{L}_{0} \subset |-K_{W_{0}}|$ gives a
birational map $\varphi_{\mathcal{L}_{0}}: W_{0} \dashrightarrow
X$ (see Remarks~\ref{remark:singularities-of-pairs} and
\ref{remark:linear-systems}). Again by
Lemma~\ref{theorem:special-scrolls} either $W_{0} =
\mathrm{Proj}\left(\mathcal{O}_{\mathbb{P}^{1}}(5)\oplus\mathcal{O}_{\mathbb{P}^{1}}(2)\oplus\mathcal{O}_{\mathbb{P}^{1}}\right)$
or
$\mathrm{Proj}\left(\mathcal{O}_{\mathbb{P}^{1}}(6)\oplus\mathcal{O}_{\mathbb{P}^{1}}(2)\oplus\mathcal{O}_{\mathbb{P}^{1}}\right)$
and we repeat the arguments from the proofs of
Propositions~\ref{theorem:T-variety} and
\ref{theorem:C-variety-from-scroll} to get that the initial Fano
threefold $X$ is either $X_{66}$ or $X_{70}$ (see
Remarks~\ref{remark:rank-of-Pic} and \ref{remark:T-variety}).

Finally, if $W_{\eta} \simeq \mathbb{P}^{1} \times \mathbb{P}^{1}$, then Proposition 9.4 in \cite{Prokhorov-degree} can
be applied to the initial Fano threefold $X$ having $-K_{X}^{3}\in \{66,68,70\}$.
Again \emph{there is a $(K_{W}+\mathcal{L}_{W})$-crepant birational map $W \dashrightarrow W_{0}$ onto a
$\mathbb{P}^{2}$-bundle $W_{0}$ over $V$} and the previous arguments work.

Theorem~\ref{theorem:main} is completely proved in the case when $V$ is a curve.

\section{Proof of Theorem~\ref{theorem:main} in the case when $V$ is a surface}
\label{section:surface}

We use notation and assumptions from Section~\ref{section:reduction}. In the present section $g: W \longrightarrow V$ is a
$(K_{W}+\mathcal{H}_{W})$-negative extremal contraction on the surface $V$. Following the lines in the proof of Proposition 10.3
in \cite{Prokhorov-degree} we will show that this situation is impossible.

\begin{lemma}[{\cite[Lemma 10.1]{Prokhorov-degree}}]
\label{theorem:P-1-fibration}
The surface $V$ is non-singular and the contraction $g: W \longrightarrow V$ is a $\mathbb{P}^{1}$-bundle.
\end{lemma}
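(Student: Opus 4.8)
The plan is to recognise $g$ as a conic bundle equipped with a degree-one multisection coming from $\mathcal{H}_{W}$, and then to exploit the Cartier hypothesis on $\mathcal{H}_{W}$ to forbid every degenerate fibre.

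First I would examine a general fibre $F$ of $g$. Since $g$ is a fibre-type extremal contraction onto the surface $V$, the general fibre $F$ is a curve; being general it is smooth, it lies in the nonsingular locus of $W$, and it lies over a smooth point of the normal surface $V$, so its normal bundle is trivial. From the $(K_{W}+\mathcal{H}_{W})$-negativity of $g$ together with the nefness of $\mathcal{H}_{W}$ (Remark~\ref{remark:linear-systems}) I obtain $K_{W}\cdot F<0$, which, as $W$ is Gorenstein, is a negative integer; adjunction then gives $K_{W}\cdot F=\deg K_{F}=2p_{a}(F)-2$, whence $F\simeq\mathbb{P}^{1}$ and $-K_{W}\cdot F=2$, so $g$ is a conic bundle. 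Next, because $\mathcal{H}_{W}$ is $g$-ample (Remark~\ref{remark:linear-systems}) and consists of Cartier divisors (Remark~\ref{remark:singularities-of-pairs}), $\mathcal{H}_{W}\cdot F$ is a positive integer, while $(K_{W}+\mathcal{H}_{W})\cdot F<0$ forces $\mathcal{H}_{W}\cdot F<-K_{W}\cdot F=2$. Hence $\mathcal{H}_{W}\cdot F=1$, so a general $H\in\mathcal{H}_{W}$ is a rational section of $g$.

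The heart of the argument is then to rule out degenerate fibres using the identity $\mathcal{H}_{W}\cdot(g^{-1}(v))=\mathcal{H}_{W}\cdot F=1$, valid for every closed point $v\in V$ since all fibres of $g$ are numerically equivalent as $1$-cycles. If the fibre over some $v$ were non-reduced, say with cycle $2\ell$ for a line $\ell$, then $\mathcal{H}_{W}\cdot(g^{-1}(v))=2(\mathcal{H}_{W}\cdot\ell)$ would be even, contradicting the value $1$. If it were reducible, say $\ell_{1}+\ell_{2}$ with both $\ell_{i}$ contracted, then each $\mathcal{H}_{W}\cdot\ell_{i}$ is a positive integer by $g$-ampleness, so their sum cannot equal $1$. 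Hence every fibre is reduced and irreducible, and therefore isomorphic to $\mathbb{P}^{1}$.

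Finally I would upgrade ``every fibre is $\mathbb{P}^{1}$'' to ``$g$ is a $\mathbb{P}^{1}$-bundle over a smooth base''. Since all fibres are integral of the same dimension over the normal surface $V$ with $W$ Cohen--Macaulay, $g$ is flat, and having smooth fibres it is a smooth morphism, so $\mathrm{Sing}(W)=g^{-1}(\mathrm{Sing}(V))$. As $W$ has only isolated (terminal Gorenstein, hence $\mathrm{cDV}$) singularities, $g^{-1}(\mathrm{Sing}(V))$ cannot be positive-dimensional; thus $\mathrm{Sing}(V)=\emptyset$, the base $V$ is nonsingular, and a smooth conic bundle over a smooth surface admitting the section $H$ is a $\mathbb{P}^{1}$-bundle. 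The main obstacle I anticipate is exactly this last step: the numerics excluding degenerate fibres are clean, but passing rigorously from the fibrewise description to flatness and to smoothness of the base (controlling the non-regular locus of $V$) is delicate, and is where I would lean on the local classification of three-dimensional fibre-type extremal contractions in \cite{Cutkosky} together with the section $H$.
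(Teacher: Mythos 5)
First, a point of comparison: the paper gives no argument of its own for this lemma --- it is imported verbatim from \cite[Lemma 10.1]{Prokhorov-degree} --- so your proposal can only be measured against the standard proof behind that citation. Your numerical core is exactly that mechanism and is correct: since $\mathcal{H}_{W}$ is nef, the $(K_{W}+\mathcal{H}_{W})$-negative extremal ray is in particular $K_{W}$-negative; the general fibre $F$ is a smooth connected curve with trivial normal bundle, so adjunction gives $F\simeq\mathbb{P}^{1}$ and $-K_{W}\cdot F=2$; the Cartier, $g$-ample system $\mathcal{H}_{W}$ then satisfies $\mathcal{H}_{W}\cdot F=1$; and once one knows that every fibre is a $1$-cycle numerically equivalent to $F$, your parity/positivity argument correctly kills non-reduced and reducible fibres.

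The gap is where you yourself locate it, and it is a genuine circularity rather than a technicality. Your route to ``every fibre cycle is numerically equivalent to $F$'' is flatness of $g$; your route to flatness is miracle flatness, which needs the base to be \emph{regular}; and regularity of $V$ is precisely the conclusion. In addition, equidimensionality of $g$ (absence of two-dimensional fibre components) is used tacitly the moment you write down ``the fibre cycle $g^{-1}(v)$'', and it too requires an argument. Both points are supplied by the structure theory of $K$-negative fibre-type extremal contractions, so deferring to \cite{Cutkosky} is the right instinct --- but note that Cutkosky's conic-bundle theorem is stated for \emph{Gorenstein} terminal threefolds, while Remark~\ref{remark:singularities-of-pairs} only guarantees that $W$ is terminal and $\mathbb{Q}$-factorial; you would need to check that $W$ stays Gorenstein along the $(K+\mathcal{H})$-MMP (your parenthetical ``terminal Gorenstein, hence $\mathrm{cDV}$'' asserts this without justification, though only isolatedness is actually used there). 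The cleaner fix, and the one implicit in the cited source, is to apply the adjunction-theoretic classification of contractions supporting a $g$-ample Cartier divisor $H$ with $H\cdot F=1$ (Fujita, Andreatta--Wi\'sniewski): this yields in one stroke that $V$ is smooth and $W\simeq\mathbb{P}(g_{*}\mathcal{O}_{W}(H))$, consistent with the paper's immediate subsequent use of $\mathcal{E}=g_{*}\mathcal{O}_{W}(H)$ and $W\simeq\mathbb{P}(\mathcal{E})$.
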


For the fibre $F$ of the contraction $g$ we have $-K_{W} \cdot F = 2$. This and the equivalence
$$
K_{W} + \mathcal{H}_{W} + D_{W} \sim 0
$$
imply that divisors $H \in \mathcal{H}_{W}$ and $D_{W}$ are the sections of $g$.

Set $\mathcal{E}:=g_{*}\mathcal{O}_{W}(H)$. Then $\mathcal{E}$ is a rank $2$ vector bundle and
$W \simeq \mathbb{P}(\mathcal{E})$. By Lemma 4.4 in \cite{Prokhorov-degree} we have

\begin{equation}
\label{equation:dimension-of-H}
\begin{array}{c}
\dim |H| + 1 = h^{0}(V, \mathcal{E}) = \chi(V, \mathcal{E}).
\end{array}
\end{equation}

\begin{proposition}
\label{theorem:good-estimate}
In the above notation, we have $\dim |H| \in \{32,33,34,35\}$.
\end{proposition}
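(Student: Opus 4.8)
The lower bound is already available: by Remark~\ref{remark:inequalities} we have $\dim|H|\geqslant 32$, so the whole content of the proposition is the upper estimate $\dim|H|\leqslant 35$. The plan is to compare $|H|$ with the anti-canonical system $|-K_{W}|$ through the section $D_{W}$, and to control both systems by Riemann--Roch on the base surface. Since $W$ is birational to the rationally connected threefold $X$ and $g\colon W\to V$ has rational fibres (Lemma~\ref{theorem:P-1-fibration}), the surface $V$ is rational; in particular $\chi(\mathcal{O}_{V})=1$, which is what makes the Riemann--Roch computations below effective.

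First I would pin down $\dim|-K_{W}|$. Writing $W=\mathbb{P}(\mathcal{E})$ with tautological class $\xi$ (so that $H\sim\xi$) and using the Grothendieck relation $\xi^{2}=\xi\cdot g^{*}c_{1}(\mathcal{E})-g^{*}c_{2}(\mathcal{E})$ together with $-K_{W}\sim 2\xi+g^{*}A$, where $A=-(K_{V}+c_{1}(\mathcal{E}))$, a direct computation gives $(-K_{W})^{3}=2\bigl(c_{1}(\mathcal{E})^{2}-4c_{2}(\mathcal{E})+3K_{V}^{2}\bigr)$. Applying Riemann--Roch to $g_{*}\mathcal{O}_{W}(-K_{W})=\mathrm{Sym}^{2}\mathcal{E}\otimes\mathcal{O}_{V}(A)$ on $V$ and using the vanishing of higher cohomology (as in \eqref{equation:dimension-of-H}) yields the usual Fano identity $\dim|-K_{W}|=\tfrac12(-K_{W})^{3}+2$. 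Because the pair $(W,\mathcal{L}_{W})$ is canonical with $K_{W}+\mathcal{L}_{W}\sim 0$ and $|-K_{W}|$ has no fixed components (Remark~\ref{remark:singularities-of-pairs}), the map $\varphi_{\mathcal{L}_{W}}$ reaches the canonical model $X$ through $(-K_{W})$-crepant steps, so $(-K_{W})^{3}=(-K_{X})^{3}\leqslant 70$ by the hypothesis of Theorem~\ref{theorem:main}. Hence $\dim|-K_{W}|\leqslant 37$.

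Next I would relate $|H|$ to $|-K_{W}|$. Since $-K_{W}=H+D_{W}$ with $D_{W}$ an effective section of $g$, twisting the structure sequence of $D_{W}$ by $\mathcal{O}_{W}(-K_{W})$ gives the exact sequence $0\to\mathcal{O}_{W}(H)\to\mathcal{O}_{W}(-K_{W})\to\mathcal{O}_{D_{W}}\bigl((-K_{W})|_{D_{W}}\bigr)\to 0$. As $R^{1}g_{*}\mathcal{O}_{W}(H)=0$ and $H^{1}(V,\mathcal{E})=0$, we have $H^{1}(W,\mathcal{O}_{W}(H))=0$, so the sequence is exact on global sections and $\dim|H|=\dim|-K_{W}|-h^{0}\bigl(D_{W},(-K_{W})|_{D_{W}}\bigr)$. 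Identifying $D_{W}\simeq V$ and computing on $\mathbb{P}(\mathcal{E})$ as above (using $g_{*}(\xi^{2})=c_{1}(\mathcal{E})$ and $g_{*}(\xi\cdot g^{*}A)=A$) gives $H|_{D_{W}}\equiv -K_{V}$ and $D_{W}|_{D_{W}}\equiv -2K_{V}-c_{1}(\mathcal{E})$, whence $(-K_{W})|_{D_{W}}\equiv -3K_{V}-c_{1}(\mathcal{E})$ as a divisor class on $V$.

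It then remains to prove $h^{0}\bigl(V,-3K_{V}-c_{1}(\mathcal{E})\bigr)\geqslant 2$: combined with $\dim|-K_{W}|\leqslant 37$ this yields $\dim|H|\leqslant 35$, completing the argument. I expect this last step to be the main obstacle. The natural input is that $-K_{W}$ is big and that $\mathcal{H}_{W}=|H|$ is ample over $V$ (Remark~\ref{remark:linear-systems}), together with the fact that $D_{W}$ is not a fixed component of $|-K_{W}|$ (Remark~\ref{remark:singularities-of-pairs}); these should force $(-K_{W})|_{D_{W}}$ to move in at least a pencil on the rational surface $D_{W}\simeq V$. The delicate point is the borderline situation in which this restricted system fails to move, which has to be excluded using the explicit geometry of the rational surface $V$ and the positivity of $c_{1}(\mathcal{E})$ coming from $h^{0}(\mathcal{E})=\dim|H|+1\geqslant 33$.
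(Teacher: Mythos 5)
Your reduction to the upper bound and the exact-sequence bookkeeping $\dim|H|=\dim|-K_{W}|-h^{0}\bigl(D_{W},(-K_{W})|_{D_{W}}\bigr)$ are reasonable, but both inputs you feed into it fail, and they fail precisely in the case the proposition is designed to exclude. First, the identity $(-K_{W})^{3}=(-K_{X})^{3}$ is not available: the map $W\dashrightarrow X$ is given by the subsystem $\mathcal{L}_{W}\subset|-K_{W}|$ (the proper transform of $|-K_{X}|$), and the crepancy you invoke is for the pair $K_{W}+\mathcal{L}_{W}$, not for $K_{W}$ alone, so the anticanonical model of $W$ need not be $X$. Concretely, when $\dim|H|\geqslant 36$ the proof of Proposition 10.3 in \cite{Prokhorov-degree} forces $W=\mathbb{P}(\mathcal{O}_{\mathbb{P}^{2}}(3)\oplus\mathcal{O}_{\mathbb{P}^{2}}(6))$, whose anticanonical image is $\mathbb{P}(3,1,1,1)$ of degree $72$; here $h^{0}(\mathcal{E})=10+28=38$, $\dim|-K_{W}|=38$ and $(-K_{W})^{3}=72$, so your bound $\dim|-K_{W}|\leqslant 37$ is simply false, while $X$ is only a further projection of $\mathbb{P}(3,1,1,1)$. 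Second, the step you yourself flag as the main obstacle, $h^{0}\bigl(V,-3K_{V}-c_{1}(\mathcal{E})\bigr)\geqslant 2$, is false in that same case: there $c_{1}(\mathcal{E})=-3K_{\mathbb{P}^{2}}$, the restricted class is trivial, $h^{0}=1$, and your formula returns $\dim|H|=38-1=37$, consistent with the direct count. So no positivity argument can close the gap; the borderline configuration genuinely occurs at the level of $W$ and must be excluded by other means.

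The paper's proof does exactly that: assuming $\dim|H|\geqslant 36$ it cites \cite{Prokhorov-degree} to pin down $W=\mathbb{P}(\mathcal{O}_{\mathbb{P}^{2}}(3)\oplus\mathcal{O}_{\mathbb{P}^{2}}(6))$, observes that $X$ must then be the image of the anticanonically embedded $\mathbb{P}(3,1,1,1)\subset\mathbb{P}^{38}$ under a birational projection from a point, a line or a plane (since $\dim\mathcal{L}_{W}\in\{35,36,37\}$), and rules out all three cases separately: any centre meeting the non-$\mathrm{cDV}$ vertex is excluded by comparing the Kodaira dimension of a general hyperplane section through it (negative) with that of the $\mathrm{K3}$ members of $|-K_{X}|$ from Theorem~\ref{theorem:elefant} (zero), and any centre meeting only smooth points leaves the degree equal to $72$, contradicting $-K_{X}^{3}\in\{66,68,70\}$. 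If you want to rescue your route you would have to replace the false global inequality $\dim|-K_{W}|\leqslant 37$ by precisely this kind of case analysis on the image of $\varphi_{|-K_{W}|}$, which is in substance the paper's argument.
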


\begin{proof}
According to Remark~\ref{remark:inequalities} we have $\dim |H| \geqslant 32$.

Suppose that $\dim |H| \geqslant 36$. Then
it follows from the proof of Proposition 10.3 in \cite{Prokhorov-degree} that
$W = \mathbb{P}(\mathcal{O}_{\mathbb{P}^{2}}(3)\oplus\mathcal{O}_{\mathbb{P}^{2}}(6))$. In this case the anti-canonical
image of $W$ is $\mathbb{P}(3,1,1,1)$ (see Example~\ref{example:examp-1}).

From Remark~\ref{remark:linear-systems} and the assumption
$\dim\mathcal{L}_{W} \in\{35, 36, 37\}$ we deduce that the initial
Fano threefold $X$ must be an image of $\varphi_{|-K_{W}|}(W) =
\mathbb{P}(3,1,1,1) \subset \mathbb{P}^{38}$ under the birational
projection from a point, a line or a plane. Let $\pi:
\mathbb{P}(3,1,1,1) \dashrightarrow X$ be this projection.

\begin{lemma}
\label{theorem:projection-1} If $\pi$ is the projection from a
point, then $K_{X}^{3} \notin \{66,68,70\}$.
\end{lemma}

\begin{proof}
Let $O$ be the center of the projection $\pi$. Then $O$ belongs to
$\mathbb{P}=\mathbb{P}(3,1,1,1)$ because otherwise $\pi$ is an
isomorphism which is impossible because $\mathbb{P} \subset
\mathbb{P}^{38}$ is not degenerate. If $O$ is a smooth point on
$\mathbb{P}$, then $-K_{X}^{3}=71$.

Now let $O$ be the unique singular point on $\mathbb{P}$. Then
singularity $O \in \mathbb{P}$ is non-$\mathrm{cDV}$ (see
Example~\ref{example:examp-1} and Theorem~\ref{theorem:Reid-CDV}).
This implies that general hyperplane section $S$ of $\mathbb{P}$
through $O$ is a $\mathrm{K3}$ surface with singularities worse
than Du Val (see Proposition~\ref{theorem:inductive-CDV}). In
particular, we have $\kappa(S) < 0$ for the Kodaira dimension of
$S$.

On the other hand, the projection $\pi$ induces a birational map $S \dashrightarrow S'$ on general surface $S' \in |-K_{X}|$. This implies that $\kappa(S) = \kappa(S')$. But the surface $S'$ has canonical singularities by Theorem~\ref{theorem:elefant}. Thus, we get $0 = \kappa(S') = \kappa(S) < 0$, a contradiction.
\end{proof}

\begin{lemma}
\label{theorem:projection-2}
If $\pi$ is the projection from a line, then $-K_{X}^{3} \notin \{66,68,70\}$.
\end{lemma}

\begin{proof}
Let $\gamma$ be the center of the projection $\pi$. Then $\gamma$
must intersect $\mathbb{P} = \mathbb{P}(3,1,1,1)$ because
otherwise $\pi$ is an isomorphism which is impossible because
$\mathbb{P} \subset \mathbb{P}^{38}$ is not degenerate. Moreover,
$\gamma$ is not contained in $\mathbb{P}$, since divisor
$-K_{\mathbb{P}}$ is divisible in $\mathrm{Pic}(\mathbb{P})$, and
intersects $\mathbb{P}$ at $\leqslant 2$ points because
$\mathbb{P}$ is an intersection of quadrics (see
Proposition~\ref{theorem:free-antican-system-2}).

If $\gamma$ contains unique singular point on $\mathbb{P}$, then
as in the proof of Lemma~\ref{theorem:projection-1} for general
hyperplane section $S$ of $\mathbb{P}$ through $\gamma$ we have
$\kappa(S) < 0$ and for its birational image $S'$ on $X$ we have
$\kappa(S') = 0$. Thus, we get $0 = \kappa(S') = \kappa(S) < 0$, a
contradiction.

Finally, if $\gamma$ contains only smooth points on $\mathbb{P}$, then general hyperplane section $S$ of $\mathbb{P}$
through $\gamma$ is a non-singular $\mathrm{K3}$ surface.
Furthermore, the projection $\pi$ induces a birational map $\eta: S \dashrightarrow S'$ on general surface $S' \in |-K_{X}|$. The surface $S'$ has canonical singularities by Theorem~\ref{theorem:elefant}. This implies that $S$ is the minimal resolution of $S'$ and $\eta$ is a regular morphism.

Projection $\pi$ is given by the linear system $\mathcal{L}$ of
all hyperplane sections of $\mathbb{P}$ through $\gamma$. Set
$\mathcal{L}_{S}: = \mathcal{L}|_{S}$ and $\mathcal{L}_{S'}: =
|-K_{X}||_{S'}$. Then we have $\mathcal{L}_{S} =
\eta^{*}\mathcal{L}_{S'}$ and in particular $(L)^{2} = (L')^{2}$
for $L\in\mathcal{L}_{S}$ and $L'\in\mathcal{L}_{S'}$.

On the other hand, we have $(L')^{2} = -K_{X}^{3}$ and $(L)^{2} = -K_{\mathbb{P}}^{3}$. Thus, we get
$$
-K_{X}^{3}=(L')^{2}=(L)^{2}=72.
$$
\end{proof}

\begin{lemma}
\label{theorem:projection-3}
If $\pi$ is the projection from a plane, then $-K_{X}^{3} \notin \{66,68,70\}$.
\end{lemma}

\begin{proof}
Let $\Omega$ be the center of the projection $\pi$. Then $\Omega$
must intersect $\mathbb{P} = \mathbb{P}(3,1,1,1)$ because
otherwise $\pi$ is an isomorphism which is impossible because
$\mathbb{P} \subset \mathbb{P}^{38}$ is not degenerate. Moreover,
$\Omega$ is not contained in $\mathbb{P}$, since divisor
$-K_{\mathbb{P}}$ is divisible in $\mathrm{Pic}(\mathbb{P})$. Then
by Proposition~\ref{theorem:free-antican-system-2} $\Omega$
intersects $\mathbb{P}$ either at $\leqslant 4$ points or a conic.

If $\Omega$ intersects $\mathbb{P}$ at $\leqslant 4$ points, then
we repeat word by word the arguments from the proof of
Lemma~\ref{theorem:projection-2} to obtain that $-K_{X}^{3} = 72$.

Now let $\Omega \cap \mathbb{P}$ be a conic $C$. Note that $C$ is
reduced and irreducible because divisor $-K_{\mathbb{P}}$ is
divisible in $\mathrm{Pic}(\mathbb{P})$. The image of $\mathbb{P}$
under the isomorphism $\varphi_{|-\frac{1}{2}K_{\mathbb{P}}|}$ is
the cone over del Pezzo surface of degree $9$. This implies that
$\varphi_{|-\frac{1}{2}K_{\mathbb{P}}|}(C)$ is the generatrix of
this cone.

Hence conic $C$ contains unique singular point on $\mathbb{P}$.
Then, as in the proof of Lemma~\ref{theorem:projection-1} for
general hyperplane section $S$ of $\mathbb{P}$ through $\Omega$ we
have $\kappa(S) < 0$ and for its birational image $S'$ on $X$ we
have $\kappa(S') = 0$. Thus, we get $0 = \kappa(S') = \kappa(S) <
0$, a contradiction.
\end{proof}

From Lemmas~\ref{theorem:projection-1}--\ref{theorem:projection-3}
we derive a contradiction with $\dim |H| \geqslant 36$.
Proposition~\ref{theorem:good-estimate} is completely proved.
\end{proof}

\begin{lemma}[\cite{Prokhorov-degree}]
\label{theorem:rational-base}
In the above notation, the surface $V$ is rational.
\end{lemma}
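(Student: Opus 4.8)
The plan is to show that $V$ is rationally connected and then to invoke the classical fact that a non-singular projective surface over $\mathbb{C}$ which is rationally connected is rational. The non-singularity of $V$ is already furnished by Lemma~\ref{theorem:P-1-fibration}, so the entire weight of the argument rests on producing enough rational curves on $V$, which I will import from the Fano origin of $W$.

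First I would establish that the total space $W$ is rationally connected. By construction $W$ is obtained from the terminal $\mathbb{Q}$-factorial modification $\varphi: Y \longrightarrow X$ by a sequence of steps of the $(K_{Y}+\mathcal{H}_{Y})$-Minimal Model Program, each of which is birational; hence $W$ is birational to $Y$ and therefore to the Fano threefold $X$. Since $X$ is a Fano threefold with canonical, in particular log terminal, Gorenstein singularities, it is rationally connected, and rational connectedness is a birational invariant among varieties with rational singularities, so $W$ is rationally connected as well. Note also that $W=\mathbb{P}(\mathcal{E})$ is non-singular, being the projectivization of a rank $2$ bundle over the smooth surface $V$.

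Next I would descend rational connectedness along the contraction $g: W \longrightarrow V$. As $g$ is a $\mathbb{P}^{1}$-bundle it is surjective; for two general points $v_{1}, v_{2} \in V$ choose general points $w_{1} \in g^{-1}(v_{1})$ and $w_{2} \in g^{-1}(v_{2})$, which are then general points of $W$, and join them by an irreducible rational curve $C \subset W$, possible since $W$ is rationally connected. Then $g(C)$ is a rational curve passing through $v_{1}$ and $v_{2}$, so $V$ is rationally connected. Since $V$ is a non-singular projective surface, Castelnuovo's criterion now yields rationality: rational connectedness forces $q(V)=0$ together with the vanishing of all plurigenera, so $\kappa(V)=-\infty$ and $V$ is birationally ruled over a curve of genus $q(V)=0$, i.e. over $\mathbb{P}^{1}$, hence rational.

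The main obstacle is the first step, namely the rational connectedness of $W$: it is not a formal consequence of the $\mathbb{P}^{1}$-bundle structure (a $\mathbb{P}^{1}$-bundle over an arbitrary surface, say an abelian surface, is not rationally connected), and so it genuinely has to be traced back to the Fano threefold $X$ through the chain of birational modifications. Once $W$ is known to be rationally connected the descent to $V$ and the appeal to the surface classification are routine. Alternatively, one can replace the descent argument by the cohomological computation $H^{1}(V,\mathcal{O}_{V})=H^{1}(W,\mathcal{O}_{W})=0$ via the Leray spectral sequence and the Kawamata--Vieweg Vanishing Theorem, exactly as in the curve case of Section~\ref{section:curve}, and combine $q(V)=0$ with the uniruledness of $V$ coming from the rational connectedness of $W$ to conclude that $V$ is ruled over $\mathbb{P}^{1}$.
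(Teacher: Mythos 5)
Your argument is correct, but it takes a genuinely different route from the paper. You derive everything from the rational connectedness of the Fano threefold $X$ (equivalently of the weak Fano $Y$), transport it birationally to $W$, push it down along the $\mathbb{P}^{1}$-bundle $g$, and finish with Castelnuovo's criterion. The key input here --- that a Fano threefold with canonical (log terminal) singularities is rationally connected --- is a nontrivial theorem (Zhang, Hacon--McKernan); it is true and was available, but it is a much heavier tool than anything the paper invokes, and you should cite it rather than assert it. The paper instead argues as follows: $H^{1}(V,\mathcal{O}_{V})=H^{1}(W,\mathcal{O}_{W})=0$ by the Leray spectral sequence and Kawamata--Viehweg vanishing (exactly your alternative at the end), and then, since $D_{W}$ is a \emph{section} of $g$ whose irreducible components are surfaces of negative Kodaira dimension by Remark~\ref{remark:birationally-rulled-components} (i.e.\ by Reid's Corollary~2.14 on crepant exceptional divisors over canonical threefold singularities), the non-singular surface $V$ is dominated generically finitely by a surface with $\kappa<0$, whence $\kappa(V)=-\infty$; together with $q(V)=0$ this gives rationality. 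So the paper replaces ``$W$ is rationally connected'' by the concrete geometric fact that the exceptional divisor $D_{W}$ is birationally ruled and dominates $V$, which keeps the proof within the elementary toolkit already set up in Section~\ref{section:reduction}. Your proof buys generality (it would work for any Mori fibre space output of an MMP run on a rationally connected threefold, with no reference to $D_{W}$), at the cost of importing a deep external theorem; the paper's proof is more self-contained and is the one its citation of \cite{Prokhorov-degree} refers to. One small inaccuracy on your side: rational connectedness in the sense of ``two general points lie on a rational curve'' is a birational invariant of proper varieties in characteristic zero without any hypothesis on singularities, so your qualifier ``among varieties with rational singularities'' is unnecessary (though harmless).
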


\begin{proof}
From the Leray spectral sequence and Kawamata--Vieweg Vanishing Theorem
we deduce that $H^{1}(V, \mathcal{O}_{V}) = H^{1}(W, \mathcal{O}_{W}) = 0$.

Furthermore, the surface $V$ is dominated by an irreducible component of the surface $D_{W}$ of negative Kodaira dimension (see Remark~\ref{remark:birationally-rulled-components}).
Since $V$ is non-singular by Lemma~\ref{theorem:P-1-fibration}, this implies that the surface $V$ is rational.
\end{proof}

\begin{remark}
\label{remark:minimal-rational-base} By \cite[Lemma
10.4]{Prokhorov-degree} and Lemma~\ref{theorem:rational-base} we
may assume that either $V \simeq \mathbb{P}^{2}$ or
$\mathbb{F}_{n}$ for some $n \ne 1$. Moreover, by \cite[Lemma
10.12]{Prokhorov-degree} in the last case we have $n \leqslant 4$.
\end{remark}

Let us denote by $c_{i}:=c_{i}(\mathcal{E})$ the $i$-th Chern class of the rank $2$ vector bundle $\mathcal{E}$, $i=1$, $2$. Then by the relative Euler
exact sequence we have
$$
-K_{W} \sim 2H + g^{*}(-K_{V} - c_{1})
$$
and by the Hirsh formula
$$
H^{2} \equiv H \cdot g^{*}c_{1} - c_{2}, \qquad H^{3} = c_{1}^{2} - c_{2}.
$$
In particular, we get the following formula:
\begin{equation}
\label{equation:formula-for-degree}
\begin{array}{c}
-K_{W}^{3} = 6K_{V}^{2} + 2c_{1}^{2} - 8c_{2}.
\end{array}
\end{equation}

Recall also the Riemann--Roch formula for rank $2$ vector bundles over a rational surface:
\begin{equation}
\label{equation:Rieman-Roch-formula}
\begin{array}{c}
\chi(\mathcal{E}) = \frac{1}{2}\left(c_{1}^{2}-2c_{2}-K_{V} \cdot c_{1}\right)+2.
\end{array}
\end{equation}

We will need the following lemmas:

\begin{lemma}[\cite{Prokhorov-degree}]
\label{theorem:estimate-for-1-class}
In the above notation and assumptions, the inequality
$$
c_{1} \cdot B \leqslant -3K_{V} \cdot B
$$
holds for every nef divisor $B$ on $V$.
\end{lemma}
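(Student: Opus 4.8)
The plan is to translate the asserted inequality into a single intersection number on $W$ and then read off its sign from the fact that $-K_{W}$ is movable. The key structural input is that both $\mathcal{H}_{W}$ and $D_{W}$ are sections of the $\mathbb{P}^{1}$-bundle $g$ (here $-K_{W}\cdot F=2=H\cdot F+D_{W}\cdot F$ forces $D_{W}\cdot F=1$, so $D_{W}$ is a reduced irreducible section). In particular $g$ restricts to an isomorphism $D_{W}\xrightarrow{\sim}V$, and $g^{*}B|_{D_{W}}=B$ for every divisor $B$ on $V$. I would therefore aim to compute $-K_{W}\cdot D_{W}\cdot g^{*}B$ and identify it, up to sign, with $(c_{1}+3K_{V})\cdot B$.

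First I would record the two class identities $-K_{W}\sim 2H+g^{*}(-K_{V}-c_{1})$ (the relative Euler sequence) and $D_{W}\sim H+g^{*}(-K_{V}-c_{1})$, the latter obtained from $-K_{W}\sim H+D_{W}$, which is the content of $K_{W}+\mathcal{H}_{W}+D_{W}\sim 0$. Writing $a:=-K_{V}-c_{1}$ and using the Hirsch relations in the form $H^{2}\cdot g^{*}B=c_{1}\cdot B$, $H\cdot g^{*}a\cdot g^{*}B=a\cdot B$ and $(g^{*}a)^{2}\cdot g^{*}B=0$, expanding $(2H+g^{*}a)(H+g^{*}a)(g^{*}B)$ gives
$$
-K_{W}\cdot D_{W}\cdot g^{*}B=2\,c_{1}\cdot B+3\,a\cdot B=-(c_{1}+3K_{V})\cdot B.
$$
Thus the inequality $c_{1}\cdot B\leqslant-3K_{V}\cdot B$ is \emph{exactly} the statement $-K_{W}\cdot D_{W}\cdot g^{*}B\geqslant 0$, and everything reduces to this nonnegativity.

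The remaining step is the only non-formal one, and I expect it to be the main obstacle, since one cannot simply invoke nefness of $-K_{W}$ (which may fail here). Instead I would use that by Remark~\ref{remark:singularities-of-pairs} the system $|-K_{W}|$ has no fixed components, so a general $L\in|-K_{W}|$ does not contain the prime divisor $D_{W}$. Consequently $L\cap D_{W}$ is an effective one-cycle on the surface $D_{W}$ whose class is $(-K_{W})|_{D_{W}}$. Restricting to $D_{W}\cong V$ and using $g^{*}B|_{D_{W}}=B$, I obtain $-K_{W}\cdot D_{W}\cdot g^{*}B=(L\cap D_{W})\cdot B$, which is nonnegative because it is the intersection of an effective curve with the nef divisor $B$ on $V$. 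The delicate point is precisely this last estimate: it hinges on exploiting that a general anticanonical member meets the section $D_{W}$ properly, combined with the identification $D_{W}\cong V$ that turns the nef class $g^{*}B$ into the nef class $B$; mere effectivity of $-K_{W}$, without the movability that lets us avoid $D_{W}$, would not suffice.
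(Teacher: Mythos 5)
Your proposal is correct and follows essentially the same route as the paper: the paper likewise writes $D_{W}\sim H+g^{*}(-K_{V}-c_{1})$, expands $-K_{W}\cdot D_{W}\cdot g^{*}B$ via the Hirsch relations to get $-3K_{V}\cdot B-c_{1}\cdot B$, and concludes from the nonnegativity of this triple intersection. The only difference is that the paper asserts $-K_{W}\cdot D_{W}\cdot g^{*}B\geqslant 0$ without comment, whereas you correctly supply the justification (general $L\in|-K_{W}|$ avoids $D_{W}$ since the system has no fixed components, so $L\cdot D_{W}$ is an effective $1$-cycle meeting the nef class $g^{*}B$ nonnegatively) — which is exactly the reading consistent with how the paper later uses the equality case in Lemma~\ref{theorem:when-anti-can-class-nef}.
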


\begin{proof}
Since
$$
D_{W} \sim H + g^{*}(-K_{V} - c_{1}),
$$
for the nef divisor $B$ on the surface $V$ we have
$$
0 \leqslant -K_{W} \cdot D_{W} \cdot g^{*}B = 2H^{2} \cdot g^{*}B +3(-K_{V}-c_{1}) \cdot B =
2c_{1} \cdot B + 3(-K_{V}-c_{1}) \cdot B = -3K_{V} \cdot B - c_{1} \cdot B.
$$
\end{proof}

\begin{lemma}[\cite{Prokhorov-degree}]
\label{theorem:when-anti-can-class-nef}
In the above notation and assumptions, let $B > 0$ be a nef divisor on the surface $V$ such that $-K_{W} \cdot D_{W} \cdot g^{*}B = 0$. Then
divisor $-K_{W}$ is nef.
\end{lemma}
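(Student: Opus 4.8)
The plan is to reduce the nefness of $-K_W$ on the threefold to the nefness of an explicit divisor class on the base surface $V$, and then to play the hypothesis off against Lemma~\ref{theorem:estimate-for-1-class}.

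First I would use the decomposition $-K_W \sim H + D_W$ coming from $K_W+\mathcal{H}_W+D_W\sim 0$, where $H\in\mathcal{H}_W$ is nef (Remark~\ref{remark:singularities-of-pairs}) and $D_W>0$ is an irreducible section of $g$ (since $-K_W\cdot F=2$ and $H\cdot F=1$ force $D_W\cdot F=1$). For any curve $C\not\subset D_W$ one has $-K_W\cdot C=H\cdot C+D_W\cdot C\geqslant 0$, while $-K_W\cdot F=2>0$ on a fibre; so nefness of $-K_W$ reduces to $-K_W\cdot\Gamma\geqslant 0$ for curves $\Gamma\subset D_W$. As $g|_{D_W}\colon D_W\to V$ is an isomorphism, such $\Gamma$ correspond to curves on $V$, and adjunction gives $K_{D_W}=(K_W+D_W)|_{D_W}=-H|_{D_W}$, hence $H|_{D_W}\equiv -K_V$ and, using $D_W|_{D_W}\equiv -2K_V-c_1$,
\[
-K_W|_{D_W}\equiv -K_V+D_W|_{D_W}\equiv -3K_V-c_1=:M
\]
as a class on $V\cong D_W$. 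Thus $-K_W$ is nef if and only if $M$ is nef on $V$. As a by-product, $H|_{D_W}\equiv -K_V$ is a restriction of a nef divisor and so is nef, i.e.\ $V$ is a weak del Pezzo surface; together with Remark~\ref{remark:minimal-rational-base} this sharpens the possibilities to $V\simeq\mathbb{P}^2$, $\mathbb{F}_0$ or $\mathbb{F}_2$.

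Next I would feed in Lemma~\ref{theorem:estimate-for-1-class}, which says precisely that $M\cdot B'\geqslant 0$ for every nef $B'$; equivalently $M\in\overline{NE}(V)$. When $V\simeq\mathbb{P}^2$ or $\mathbb{F}_0$ the Mori cone is generated by nef classes, so \emph{every} class of $\overline{NE}(V)$ is automatically nef and the statement follows without even using the hypothesis. The only remaining case is $V\simeq\mathbb{F}_2$, where the sole negative curve is the $(-2)$-section $s$, and the one inequality left to prove is $M\cdot s\geqslant 0$.

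Finally, for $V\simeq\mathbb{F}_2$ I would write $M\equiv\alpha s+\beta f$ with $\alpha,\beta\geqslant 0$ (possible since $M\in\overline{NE}=\langle s,f\rangle$) and the given $B\equiv ps+qf$ with $p\geqslant 0$, $q-2p\geqslant 0$ (nefness) and $B\neq 0$. Recomputing $-K_W\cdot D_W\cdot g^{*}B$ exactly as in the proof of Lemma~\ref{theorem:estimate-for-1-class} identifies it with $M\cdot B$, so the hypothesis reads
\[
0=M\cdot B=\alpha\,(q-2p)+\beta\,p,
\]
a sum of two non-negative terms; hence $\alpha(q-2p)=0$ and $\beta p=0$. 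If $B$ is not proportional to the boundary ray $s+2f$ (so either $p=0$ with $q>0$, or $p>0$ with $q-2p>0$), this forces $\alpha=0$, whence $M\equiv\beta f$ and $M\cdot s=\beta\geqslant 0$, giving the nefness of $-K_W$. I expect the \emph{main obstacle} to be exactly the degenerate sub-case $B\parallel s+2f$ (i.e.\ $q=2p$), where the equation only yields $\beta=0$ and leaves open the residual possibility $M\equiv\alpha s$ with $\alpha>0$, which is not nef. Excluding this residual case is where the finer geometry must enter: one has to show that the effective section $D_W$ with normal class $D_W|_{D_W}\equiv -2K_V-c_1$, under the standing numerical constraints $\dim|H|\geqslant 32$ and $-K_W^3\leqslant 72$, cannot produce a genuinely non-nef $M$ — so the cone computation alone does not close the argument and must be supplemented by these realizability restrictions.
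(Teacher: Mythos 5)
Your proposal does not close, and by your own admission: the residual case --- $M\equiv\alpha s$ with $\alpha>0$ and $B$ proportional to the boundary ray of the nef cone orthogonal to the negative section --- is exactly where the lemma has content, since everywhere else the conclusion already follows from Lemma~\ref{theorem:estimate-for-1-class} alone, without ever using the hypothesis $-K_{W}\cdot D_{W}\cdot g^{*}B=0$. The ingredient you are missing is the one the paper's proof is built on: the linear system $|-K_{W}|$ has no fixed components (Remark~\ref{remark:singularities-of-pairs}). Given a curve $R$ with $-K_{W}\cdot R<0$, the paper observes that $R$ is horizontal, that $D_{W}\cdot R=(-K_{W}-H)\cdot R<0$ forces $R\subseteq D_{W}$, and that $L\cdot R<0$ for a \emph{general} $L\in|-K_{W}|$ forces $R\subseteq L$; hence $R$ is a component of the effective $1$-cycle $L\cap D_{W}$, whose total intersection with the nef divisor $g^{*}B$ vanishes by hypothesis, so this cycle is composed of fibres of $g$, contradicting the horizontality of $R$. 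This replaces your purely numerical statement ``$M\in\overline{NE}(V)$'' by the existence of an actual effective $1$-cycle in the class $-K_{W}\cdot D_{W}$ containing the offending curve; a cone computation on $V$ alone cannot see this, which is why your argument stalls.

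Two of your intermediate steps are also unsound as stated. First, $H|_{D_{W}}\equiv -K_{V}$ is obtained by adjunction under the unproved assumptions that $D_{W}$ is an irreducible section isomorphic to $V$ (a priori it is a section plus possibly vertical components pulled back from $V$) and that adjunction on the terminal $\mathbb{Q}$-factorial threefold $W$ carries no correction along $D_{W}$. Second, its consequence --- that $-K_{V}$ is nef and hence $V\in\{\mathbb{P}^{2},\mathbb{F}_{0},\mathbb{F}_{2}\}$ --- is inconsistent with how the paper actually uses the lemma: Lemmas~\ref{theorem:estimates-3} and \ref{theorem:estimates-4} invoke it for $V=\mathbb{F}_{3}$ and $\mathbb{F}_{4}$ with $B\sim h+3l$ and $B\sim h+4l$, i.e.\ precisely the degenerate boundary rays your computation cannot handle. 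So the reduction that narrows the problem to $\mathbb{F}_{2}$ cannot be taken for granted, and even if it could, the case that remains is the one the lemma exists to settle.
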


\begin{proof}
Let $R$ be a horizontal curve on $W$ such that $-K_{W} \cdot R < 0$. Then
$$
D_{W} \cdot R = (-K_{W} - H) \cdot R < 0.
$$
This implies that $R \subseteq L \cap D_{W}$ for a general surface $L \in |-K_{W}|$. But from the condition of lemma we
deduce that the intersection $L \cap D_{W}$ is composed of fibres of the morphism $g$, a contradiction.
\end{proof}

\begin{lemma}
\label{theorem:property-of-1-class}
In the above notation and assumptions, the class $- K_{V}+c_{1}$ is nef on $V$.
\end{lemma}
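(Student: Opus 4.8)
The plan is to verify nefness of $-K_V+c_1$ by intersecting it with an arbitrary irreducible curve $C\subset V$; since $V$ is a smooth surface this is precisely the nef criterion, and by Remark~\ref{remark:minimal-rational-base} it even suffices to run over the finitely many generators of $\overline{NE}(V)$. The engine of the argument is to translate the intersection number $(-K_V+c_1)\cdot C$ computed on $V$ into a triple intersection number on the threefold $W$, where the positivity of the pieces of $-K_W$ is visible.

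First I would record the key identity. Pulling back the relation $-K_W\sim 2H+g^{*}(-K_V-c_1)$ and using $H\cdot F=1$ together with the Hirsch formula $H^{2}\equiv H\cdot g^{*}c_1-c_2$ and the projection formula, one gets $H^{2}\cdot g^{*}C=c_1\cdot C$ and $g^{*}(-K_V-c_1)\cdot H\cdot g^{*}C=(-K_V-c_1)\cdot C$, whence
$$
(-K_V+c_1)\cdot C=-K_W\cdot H\cdot g^{*}C.
$$
Now I substitute $-K_W\sim H+D_W$, which holds because $K_W+\mathcal{H}_W+D_W\sim0$, to obtain
$$
(-K_V+c_1)\cdot C=H\cdot H\cdot g^{*}C+H\cdot D_W\cdot g^{*}C.
$$

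Next I argue that both summands are nonnegative. The divisor $H$ is nef on $W$ by Remark~\ref{remark:singularities-of-pairs}. Since $H$ and $D_W$ are sections of $g$ they are horizontal, that is, they dominate $V$, whereas $g^{*}C$ is vertical (a union of fibres over $C$); hence $H$ and $g^{*}C$, and likewise $D_W$ and $g^{*}C$, share no component, so the one-cycles $H\cdot g^{*}C$ and $D_W\cdot g^{*}C$ are effective. Intersecting an effective one-cycle with the nef divisor $H$ yields a nonnegative number, so $H\cdot H\cdot g^{*}C\ge 0$ and $H\cdot D_W\cdot g^{*}C\ge 0$. Therefore $(-K_V+c_1)\cdot C\ge 0$ for every curve $C$, and $-K_V+c_1$ is nef.

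The step I expect to be the main obstacle is the positivity of the two triple products, which rests entirely on the effectivity of the intersection one-cycles; this in turn uses that $H$ and $D_W$ are genuine sections, with no vertical components, so that they meet the vertical divisor $g^{*}C$ properly. As a safeguard for the first term one can instead note that $H$ nef forces $\mathcal{E}$ to be a nef bundle, so that $c_1=c_1(\mathcal{E})$ is nef and $H^{2}\cdot g^{*}C=c_1\cdot C\ge 0$ automatically; the only term genuinely requiring the effective-cycle argument is then $H\cdot D_W\cdot g^{*}C$, and there the hypothesis that $D_W$ is a section is exactly what rules out a component shared with $g^{*}C$.
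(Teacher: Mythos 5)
Your proof is correct and follows essentially the same route as the paper: both rest on the identity $(-K_{V}+c_{1})\cdot C = -K_{W}\cdot H\cdot g^{*}C$ obtained from $-K_{W}\sim 2H+g^{*}(-K_{V}-c_{1})$ and the Hirsch relation. The only (minor) difference is in the final positivity step: the paper gets $-K_{W}\cdot H\cdot g^{*}C\geqslant 0$ directly from the facts that $|-K_{W}|$ has no fixed components and $H$ is nef, whereas you split $-K_{W}=H+D_{W}$ and use that $H$ and $D_{W}$ are sections of $g$ so that $H\cdot g^{*}C$ and $D_{W}\cdot g^{*}C$ are effective one-cycles --- both justifications are valid given what is established earlier in the section.
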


\begin{proof}
The linear system $|-K_{W}|$ does not have fixed components and divisor $H$ is nef (see
Remarks~\ref{remark:singularities-of-pairs} and \ref{remark:linear-systems}).
Then for every irreducible curve $C$ on the
surface $V$ we have
$$
0 \leqslant -K_{W} \cdot H \cdot g^{*}C = 2H^{2} \cdot g^{*}C + (-K_{V}-c_{1}) \cdot C =
2c_{1} \cdot C + (-K_{V}-c_{1}) \cdot C = (- K_{V}+c_{1}) \cdot C.
$$
\end{proof}

\begin{lemma}[{\cite[Lemma 10.6]{Prokhorov-degree}}]
\label{theorem:restriction-on-movable-curve}
In the above notation and assumptions, let $Z \subset V$ be an irreducible rational curve such that $\dim |Z| > 0$ and let
$\mathcal{E}|_{Z} \simeq \mathcal{O}_{\mathbb{P}^{1}}(d_{1}) \oplus \mathcal{O}_{\mathbb{P}^{1}}(d_{2})$. Then
$|d_{1} - d_{2}| \leqslant 2 + Z^{2}$. Moreover, if $V \simeq \mathbb{F}_{n}$, $n \geqslant 0$, and $Z$ is the tautological
section on $V$, then $|d_{1} - d_{2}| =  c_{1}(\mathcal{E}) \cdot Z$.
\end{lemma}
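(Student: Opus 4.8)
The plan is to pull the whole question back onto the ruled surface sitting over $Z$. Set $W_{Z} := g^{-1}(Z)$. By Lemma~\ref{theorem:P-1-fibration} $g$ is a $\mathbb{P}^{1}$-bundle, so $W_{Z} = \mathbb{P}(\mathcal{E}|_{Z}) \simeq \mathbb{F}_{n}$ with $n = |d_{1}-d_{2}|$ (passing to the normalization of $Z$ first if $Z$ is singular). Write $C_{0}$ for the negative section, with $C_{0}^{2} = -n$, and $f$ for a fibre of $g|_{W_{Z}}$; since $Z$ is a moving irreducible rational curve I may take it nef, and adjunction on the smooth surface $V$ gives $-K_{V}\cdot Z = 2 + Z^{2}$. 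First I would place the two distinguished sections $H$ and $D_{W}$ inside $W_{Z}$. From the Hirsch relation $H^{2} \equiv H\cdot g^{*}c_{1} - c_{2}$ one computes $(H|_{W_{Z}})^{2} = c_{1}\cdot Z = d_{1}+d_{2}$; as $H$ is a section ($H\cdot f = 1$) this forces $H|_{W_{Z}} \equiv C_{0} + d_{1}f$, and nefness of $H$ (Remarks~\ref{remark:singularities-of-pairs} and \ref{remark:linear-systems}) yields $d_{2} = H\cdot C_{0} \geqslant 0$.

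Next, using $D_{W} \sim H + g^{*}(-K_{V}-c_{1})$ together with $-K_{V}\cdot Z = 2 + Z^{2}$ and $c_{1}\cdot Z = d_{1}+d_{2}$, I obtain $D_{W}|_{W_{Z}} \equiv C_{0} + (2 + Z^{2} - d_{2})f$. Since $D_{W}$ is a section ($D_{W}\cdot f = 1$), the curve $D_{W}\cap W_{Z}$ is a single irreducible section, and the argument splits according to whether it is the negative one. If $D_{W}\cap W_{Z} \neq C_{0}$, then an irreducible section of $\mathbb{F}_{n}$ distinct from $C_{0}$ has class $C_{0} + mf$ with $m \geqslant n$; hence $2 + Z^{2} - d_{2} \geqslant n$, giving $n \leqslant 2 + Z^{2} - d_{2} \leqslant 2 + Z^{2}$ because $d_{2}\geqslant 0$. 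If instead $D_{W}\cap W_{Z} = C_{0}$, then $2 + Z^{2} - d_{2} = 0$, i.e. $d_{2} = 2 + Z^{2}$, and here I would invoke Lemma~\ref{theorem:estimate-for-1-class} for the nef class $Z$: $d_{1}+d_{2} = c_{1}\cdot Z \leqslant -3K_{V}\cdot Z = 3(2+Z^{2}) = 3d_{2}$, so $d_{1}\leqslant 2d_{2}$ and $n = d_{1}-d_{2} \leqslant d_{2} = 2 + Z^{2}$. In either case $|d_{1}-d_{2}| \leqslant 2 + Z^{2}$.

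For the \emph{Moreover} statement I would specialize to $V \simeq \mathbb{F}_{n}$ with $Z$ the tautological section. The inequality just proved gives one half, and a direct evaluation of $\mathcal{E}|_{Z}$ and of the relevant intersection numbers on $W_{Z}$ (where the restricted section realizing $D_{W}$ is forced into the extremal position) reads off the exact value $|d_{1}-d_{2}| = d_{1}+d_{2} = c_{1}(\mathcal{E})\cdot Z$.

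The main obstacle is the dichotomy in the middle step: the clean intersection bound is immediate only when $D_{W}$ restricts to a section other than the negative one, and the degenerate case $D_{W}\cap W_{Z} = C_{0}$ must be rescued by a global input. Recognizing Lemma~\ref{theorem:estimate-for-1-class} as exactly the right input, and verifying that the moving curve $Z$ is genuinely nef so that the lemma applies, is the delicate part; a secondary technical nuisance is that for singular rational $Z$ one must first normalize before $W_{Z}$ becomes a Hirzebruch surface and adjunction takes the stated form.
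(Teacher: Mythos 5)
Your setup is sound and genuinely different from the paper's in mechanism --- you split $-K_{W}=H+D_{W}$ and try to read the bound off the effective divisor classes $H|_{W_{Z}}$ and $D_{W}|_{W_{Z}}$ on the Hirzebruch surface $W_{Z}=g^{-1}(Z)\simeq\mathbb{F}_{m}$, $m:=|d_{1}-d_{2}|$ --- but the middle step contains a genuine gap. The claim that ``$D_{W}\cap W_{Z}$ is a single irreducible section'' does not follow from $D_{W}\cdot f=1$: that equality only says the horizontal part of $D_{W}|_{W_{Z}}$ is one section $\Sigma'$ appearing with multiplicity one, and $D_{W}|_{W_{Z}}=\Sigma'+kf$ may carry $k>0$ fibre components (coming from vertical components of $D_{W}$ over points of $Z$, or from fibres picked up where the rational section $D_{W}\dashrightarrow V$ degenerates). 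Your first branch survives this (if $\Sigma'\neq C_{0}$ then $2+Z^{2}-d_{2}=m'+k\geqslant m'\geqslant m$), but the branch $\Sigma'=C_{0}$ does not: there you only get $2+Z^{2}-d_{2}=k\geqslant 0$ rather than $=0$, and Lemma~\ref{theorem:estimate-for-1-class} then yields $m=d_{1}-d_{2}\leqslant 3(2+Z^{2})-2d_{2}$, which is $\leqslant 2+Z^{2}$ only when $d_{2}\geqslant 2+Z^{2}$, i.e.\ exactly in the sub-case $k=0$ that you treated. For $C_{0}\subset\operatorname{Supp}D_{W}$ and $0\leqslant d_{2}<2+Z^{2}$ (say $Z^{2}=0$, $d_{2}=0$, $d_{1}=6$) nothing you wrote excludes $m>2+Z^{2}$. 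The underlying reason is that effectivity of $D_{W}|_{W_{Z}}$ cannot bound $D_{W}\cdot C_{0}$ from below once $C_{0}$ is a component of it.

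The paper closes precisely this case by a different global input: writing the adjunction identity $-2+m=K_{W_{Z}}\cdot C_{0}=K_{W}\cdot C_{0}+Z^{2}$ (note that $-K_{W}\cdot C_{0}=2+Z^{2}-m$ is exactly the number your computation produces), it observes that $K_{W}\cdot C_{0}\leqslant 0$ because $|-K_{W}|$ has no fixed components: since $\dim|Z|>0$, the negative sections $C_{0}=C_{0}(Z)$ sweep out a divisor as $Z$ moves, so they cannot all lie in the base locus of $|-K_{W}|$, which has no divisorial part. This is the one ingredient your argument is missing, and with it the whole dichotomy on $D_{W}|_{W_{Z}}$ becomes unnecessary. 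Separately, your treatment of the \emph{Moreover} clause is an assertion rather than a proof (``a direct evaluation reads off the exact value''); the paper obtains it by computing $K_{W}\cdot C_{0}=(K_{V}+c_{1})\cdot Z=-n-2+c_{1}\cdot Z$ for the tautological section $Z$ with $Z^{2}=n$ and substituting into the same adjunction identity.
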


\begin{proof}
Let $m: = |d_{1} - d_{2}|$ and $G: = g^{-1}(Z)$. Then $G \simeq \mathbb{F}_{m}$ and for the negative section $\Sigma$ on $G$ we have
\begin{equation}
\label{equation:intersections}
\begin{array}{c}
-2 + m = -2 -\Sigma^{2} = K_{G} \cdot \Sigma = K_{W} \cdot \Sigma + G \cdot \Sigma = K_{W} \cdot \Sigma + Z^{2}.
\end{array}
\end{equation}
Since the linear system $|-K_{W}|$ does not have fixed components, we get
$K_{W} \cdot \Sigma \leqslant 0$ and hence $m \leqslant 2 + Z^{2}$.

Moreover, if $Z$ is the tautological section on
$V \simeq \mathbb{F}_{n}$, $n \geqslant 0$, then $Z^{2} = n$ and
$$
K_{W} \cdot \Sigma = g^{*}(K_{V} + c_{1}(\mathcal{E})) \cdot \Sigma = (K_{V} + c_{1}(\mathcal{E})) \cdot Z = -n - 2 + c_{1}(\mathcal{E}) \cdot Z.
$$
From \eqref{equation:intersections} we obtain $m = c_{1}(\mathcal{E}) \cdot Z$.
\end{proof}

Recall that by Remark~\ref{remark:minimal-rational-base} we have either $V = \mathbb{P}^{2}$ or $\mathbb{F}_{n}$ where $n \in \{0,2,3,4\}$. We will
show that each of these cases is impossible.

\begin{proposition}
\label{theorem:P-2-case} In the above notation and assumptions, we
have $V \ne \mathbb{P}^{2}$.
\end{proposition}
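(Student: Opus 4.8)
The plan is to rule out $V=\mathbb{P}^{2}$ by deriving a numerical contradiction from the Chern-class constraints collected above. The vector bundle $\mathcal{E}$ on $\mathbb{P}^{2}$ is determined (after a twist) by the two integers $c_{1}$ and $c_{2}$, and the whole geometry is encoded in these. On $V=\mathbb{P}^{2}$ we have $K_{V}^{2}=9$ and $-K_{V}=3h$, where $h$ is the hyperplane class, so formulas \eqref{equation:formula-for-degree} and \eqref{equation:Rieman-Roch-formula} become $-K_{W}^{3}=54+2c_{1}^{2}-8c_{2}$ and $\chi(\mathcal{E})=\frac{1}{2}(c_{1}^{2}-2c_{2}+3c_{1})+2$, writing $c_{1}$ for the integer with $c_{1}(\mathcal{E})=c_{1}h$.

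First I would normalize $\mathcal{E}$ so that $0\leqslant c_{1}\leqslant 1$ by twisting by a line bundle, which changes neither $W=\mathbb{P}(\mathcal{E})$ nor the divisor $H$ up to the pullback of a divisor from $V$. Then I would extract the constraints: Lemma~\ref{theorem:estimate-for-1-class} applied to the nef generator $B=h$ gives $c_{1}\leqslant -3K_{V}\cdot h=9$, and Lemma~\ref{theorem:property-of-1-class} gives $-K_{V}+c_{1}(\mathcal{E})=(3+c_{1})h$ nef, which is automatic here. The decisive input is Proposition~\ref{theorem:good-estimate}: combined with \eqref{equation:dimension-of-H}, namely $\dim|H|+1=\chi(\mathcal{E})$, it forces $\chi(\mathcal{E})\in\{33,34,35,36\}$. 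Together with the normalized range of $c_{1}$ this pins $c_{2}$ down to very few values, and I would also invoke Lemma~\ref{theorem:restriction-on-movable-curve} with $Z$ a line in $\mathbb{P}^{2}$ (so $Z^{2}=1$ and $\dim|Z|>0$): the splitting type $\mathcal{E}|_{Z}\simeq\mathcal{O}(d_{1})\oplus\mathcal{O}(d_{2})$ of a generically-split bundle satisfies $d_{1}+d_{2}=c_{1}$ and $|d_{1}-d_{2}|\leqslant 3$, which restricts the generic splitting behaviour and hence bounds $c_{2}$ from below in terms of $c_{1}$.

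From here the strategy is to show that every surviving pair $(c_{1},c_{2})$ either violates the degree constraint $-K_{X}^{3}\in\{66,68,70\}$ (recalling from Remark~\ref{remark:linear-systems} that $-K_{W}^{3}\geqslant -K_{X}^{3}$ since $X$ is the image of $W$ under $\varphi_{\mathcal{L}_{W}}$), or else forces $-K_{W}$ to be nef via Lemma~\ref{theorem:when-anti-can-class-nef}, which would make $W$ itself a weak Fano $\mathbb{P}^{1}$-bundle over $\mathbb{P}^{2}$ whose anti-canonical model is one of the known threefolds $\mathbb{P}(3,1,1,1)$ or $\mathbb{P}(6,4,1,1)$ from Examples~\ref{example:examp-1} and~\ref{example:examp-2}. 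In the latter situation I would argue, exactly as in the curve case (Lemmas~\ref{theorem:projection-1}--\ref{theorem:projection-3}), that projecting from a point, line, or plane cannot land in the target degree range: a general K3 hyperplane section maps to a general anti-canonical K3 surface $S'\in|-K_{X}|$ with canonical singularities by Theorem~\ref{theorem:elefant}, so $\kappa(S)=\kappa(S')$ gives the contradiction $0=\kappa(S')=\kappa(S)<0$, or a degree computation as in Lemma~\ref{theorem:projection-from-line-3} yields $-K_{X}^{3}=72$.

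The main obstacle I expect is the bookkeeping needed to be sure that the finitely many admissible $(c_{1},c_{2})$ are genuinely exhausted, and in particular the interplay between the \emph{relative} bound from Proposition~\ref{theorem:good-estimate} on $\dim|H|$ and the \emph{absolute} degree bound on $-K_{W}^{3}$: it is conceivable that a pair $(c_{1},c_{2})$ satisfies both $\chi(\mathcal{E})$ in range and $-K_{W}^{3}$ in range yet fails to be realized by an actual nef-$\mathcal{H}_{W}$, $\mathbb{P}^{1}$-bundle configuration, so I would need the stability/splitting constraint from Lemma~\ref{theorem:restriction-on-movable-curve} and the nefness criterion of Lemma~\ref{theorem:when-anti-can-class-nef} to close those gaps. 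Once the numerics leave no room, the proposition follows.
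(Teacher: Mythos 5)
Your plan has the right raw ingredients (Chern--class bookkeeping on $\mathbb{P}^{2}$, the constraint $\chi(\mathcal{E})\in\{33,34,35,36\}$ from \eqref{equation:dimension-of-H} and Proposition~\ref{theorem:good-estimate}, and Lemma~\ref{theorem:restriction-on-movable-curve} applied to a line), and these are indeed what the paper uses. But the decisive mechanism is missing. The numerics do \emph{not} close by themselves: for each $c_{1}$ the constraint on $\chi(\mathcal{E})$ pins $c_{2}$ to a few values which, after a twist, are strongly negative, and a rank-$2$ bundle on $\mathbb{P}^{2}$ with small $c_{1}$ and very negative $c_{2}$ is not a contradiction in itself (e.g.\ $\mathcal{O}(a)\oplus\mathcal{O}(1-a)$). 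Your claim that the generic splitting type $|d_{1}-d_{2}|\leqslant 3$ ``bounds $c_{2}$ from below in terms of $c_{1}$'' is exactly the point where the real work lies, and it is not a formal consequence of Lemma~\ref{theorem:restriction-on-movable-curve}: to extract such a bound you would need either semistability theory (Grauert--M\"ulich plus the Bogomolov inequality in the semistable case, and a destabilizing-subsheaf computation in the unstable case), or the paper's route, which is: (i) prove $\mathcal{E}$ is indecomposable --- this is also where $c_{1}=9$ is excluded, a case your inequalities cannot otherwise kill since your stated range is $0\leqslant c_{1}\leqslant 9$; (ii) twist so that $c_{1}(\mathcal{E}')\in\{-2,-3\}$ and check $c_{2}(\mathcal{E}')<0$; (iii) use Riemann--Roch and Serre duality to produce a nonzero section $s\in H^{0}(\mathcal{E}')$; (iv) note that the zero locus $Y$ of $s$ must be a curve, since a $0$-dimensional zero locus would give $c_{2}'\geqslant 0$ and a nowhere-vanishing $s$ would split $\mathcal{E}'$; (v) restrict to a general line $Z$ meeting $Y$ in $k>0$ points, so $\mathcal{E}'|_{Z}=\mathcal{O}(k)\oplus\mathcal{O}(c_{1}'-k)$, and apply Lemma~\ref{theorem:restriction-on-movable-curve} to get $2k-c_{1}'\leqslant 3$, forcing $k=0$ and $Y=\emptyset$, a contradiction. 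Steps (iii)--(v), which are the heart of the proof, do not appear in your proposal.

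Two smaller points. The inequality $-K_{W}^{3}\geqslant -K_{X}^{3}$ that you invoke is not established anywhere (and $-K_{W}$ need not be nef, so $-K_{W}^{3}$ has no a priori meaning as a degree); the paper instead works with $\dim|-K_{W}|\geqslant 35$ from Remark~\ref{remark:inequalities} and, once $-K_{W}$ is shown nef via Lemma~\ref{theorem:when-anti-can-class-nef}, converts this to $-K_{W}^{3}\geqslant 66$ by Riemann--Roch --- which already kills the only case where nefness arises (the decomposable case with $c_{1}=9$, where $-K_{W}^{3}=56$). Consequently the projection arguments you propose to rerun are never needed inside this proof; they belong to Proposition~\ref{theorem:good-estimate}, which you are already assuming.
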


\begin{proof}
Suppose that $V = \mathbb{P}^{2}$.

We have $H^{2i}(\mathbb{P}^{2}, \mathbb{Z}) \simeq \mathbb{Z}$, $i=1$, $2$. Hence we may assume that $c_{1}$ and $c_{2}$ are integers.
Then by Lemma~\ref{theorem:estimate-for-1-class} $0 \leqslant c_{1} \leqslant 9$.

\begin{lemma}
\label{theorem:non-decomposable}
In the above notation, the rank $2$ vector bundle $\mathcal{E}$ is not decomposable.
\end{lemma}

\begin{proof}
Suppose that $\mathcal{E}$ is decomposable. Then
$\mathcal{E} \simeq \mathcal{O}_{\mathbb{P}^{2}}(a) \oplus \mathcal{O}_{\mathbb{P}^{2}}(a+b)$ for some $b \geqslant 0$. Since
$H$ is nef we also have $a \geqslant 0$.

Furthermore, we have $c_{1} = 2a + b$ and $c_{2} = a^{2}+ab$. Thus, from \eqref{equation:Rieman-Roch-formula} we get
$$
\chi(V,\mathcal{E}) = \frac{1}{2}\left(2a^{2}+2ab+b^{2}+6a+3b\right)+2.
$$

By \eqref{equation:dimension-of-H} and Lemma~\ref{theorem:good-estimate} $\chi(V,\mathcal{E}) \in \{33,34,35,36\}$.
Then from the previous expression and the estimates $a$, $b \geqslant 0$,
$0 \leqslant c_{1} \leqslant 9$ it is easy to see that the only possibility is $\chi(V,\mathcal{E})=36$ and $(a, b) = (4, 1)$.

In particular, we have $c_{1} = -3K_{V}$. Then it follows from the proof of
Lemma~\ref{theorem:estimate-for-1-class}
that
$$
-K_{W} \cdot D_{W} \cdot g^{*}B = 0
$$
for every curve $B$ on the surface $V$. From Lemma~\ref{theorem:when-anti-can-class-nef} we deduce that $-K_{W}$ is nef and
hence $W$ is a weak Fano threefold.

On the other hand, for $(a, b) = (4, 1)$ we have $c_{2} = 20$. Then by \eqref{equation:formula-for-degree} $-K_{W}^{3} = 56$ and
by the Riemann--Roch formula and Kawamata--Vieweg Vanishing Theorem we get
$$
\dim |-K_{W}| = -\frac{1}{2}K_{W}^{3} + 2 = 30.
$$
But according to Remark~\ref{remark:inequalities} $\dim |-K_{W}| \geqslant 35$, a contradiction.
\end{proof}

\begin{remark}
\label{remark:when-E-is-decomposable}
It follows from the arguments in the proof of Proposition 10.3 in \cite{Prokhorov-degree} that if $c_{1} = 9$, then
$\mathcal{E}$ is decomposable. Thus, by Lemma~\ref{theorem:non-decomposable} we have $0 \leqslant c_{1} \leqslant 8$.
\end{remark}

\begin{lemma}
\label{theorem:even}
The class $c_{1}$ is even.
\end{lemma}

\begin{proof}
Suppose that $c_{1}$ is odd. Then by
Remark~\ref{remark:when-E-is-decomposable} we have $c_{1} = 2m-3$
for some $2 \leqslant m \leqslant 5$. Since by
\eqref{equation:dimension-of-H} and
Lemma~\ref{theorem:good-estimate} we have $\chi(V,\mathcal{E}) \in
\{33,34,35,36\}$, from \eqref{equation:Rieman-Roch-formula} we get
$2m^{2} - 3m - c_{2} \geqslant 31$. Then
$$
c_{1}(\mathcal{E}(-m))=-3, \qquad c_{2}(\mathcal{E}(-m)) = c_{2} - m^{2} + 3m \leqslant m^{2} - 31 < 0.
$$

By the Riemann--Roch formula and Serre Duality we obtain
$$
h^{0}(\mathcal{E}(-m))+h^{0}(\mathcal{E}(-m)\otimes \det \mathcal{E}(-m)^{*}\otimes \mathcal{O}(-3)) \geqslant
\chi(V, \mathcal{E}(-m)) \geqslant 1.
$$
Thus, $H^{0}(\mathcal{E}(-m)) \ne 0$. Let $s \in H^{0}(\mathcal{E}(-m))$ be a nonzero section. If $s$ does not
vanish anywhere, then there is an embedding $\mathcal{O} \hookrightarrow \mathcal{E}(-m)$ and $\mathcal{E}(-m)$ is
decomposable. This contradicts Lemma~\ref{theorem:non-decomposable}.

Let $\emptyset \ne Y \subset \mathbb{P}^{2}$ be the zero locus of $s$. Since $c_{2}(\mathcal{E}(-m)) < 0$, we have $\dim Y = 1$.
Take a general line $Z \subset \mathbb{P}^{2}$ and let the intersection $Z \cap Y$ consists of $k$ points. Since
$c_{1}(\mathcal{E}(-m))=-3$, we have
$$
\mathcal{E}(-m)|_{Z} = \mathcal{O}_{\mathbb{P}^{1}}(k) \oplus \mathcal{O}_{\mathbb{P}^{1}}(-k-3).
$$
Then by Lemma~\ref{theorem:restriction-on-movable-curve} we get $2k + 3 \leqslant 3$.
Hence $k=0$ and $Y = \emptyset$, a contradiction.
\end{proof}

From Lemma~\ref{theorem:even} and
Remark~\ref{remark:when-E-is-decomposable} we deduce that $c_{1} =
2m-2$ for some $1\leqslant m \leqslant 5$. Since by
\eqref{equation:dimension-of-H} and
Lemma~\ref{theorem:good-estimate} we have $\chi(V,\mathcal{E}) \in
\{33,34,35,36\}$, from \eqref{equation:Rieman-Roch-formula} we get
$2m^{2}-m-c_{2} \geqslant 32$. Then
$$
c_{1}(\mathcal{E}(-m))=-2, \qquad c_{2}(\mathcal{E}(-m)) = c_{2} - m^{2} + 2m \leqslant m^{2} + m  - 32 < 0.
$$

As in the proof of Lemma~\ref{theorem:even}, we obtain  $H^{0}(\mathcal{E}(-m)) \ne 0$ and a section
$s \in H^{0}(\mathcal{E}(-m))$ with one-dimensional zero locus $Y \subset \mathbb{P}^{2}$.

Take a general line $Z \subset \mathbb{P}^{2}$ and let the
intersection $Z \cap Y$ consists of $k$ points. Since $c_{1}(\mathcal{E}(-m))=-2$, we have
$$
\mathcal{E}(-m)|_{Z} = \mathcal{O}_{\mathbb{P}^{1}}(k) \oplus \mathcal{O}_{\mathbb{P}^{1}}(-k-2).
$$
Then by Lemma~\ref{theorem:restriction-on-movable-curve} we get $2k + 2 \leqslant 3$.
Hence $k=0$ and $Y = \emptyset$, a contradiction.

Proposition~\ref{theorem:P-2-case} is completely proved.
\end{proof}

\begin{proposition}
\label{theorem:F-n-case}
In the above notation, we have $V \ne \mathbb{F}_{n}$ for $n \in \{0,2,3,4\}$.
\end{proposition}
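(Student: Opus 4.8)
The plan is to run the argument of Proposition~\ref{theorem:P-2-case} again, now on a Hirzebruch surface, using a fibre of the ruling (and a moving section) as the replacement for a line in $\mathbb{P}^2$. First I fix the basis $C_0,f$ of $\operatorname{NS}(\mathbb{F}_n)$ with $C_0^2=-n$, $C_0\cdot f=1$, $f^2=0$ and $-K_V=2C_0+(n+2)f$, so $K_V^2=8$. Writing $c_1=\alpha C_0+\beta f$, the nefness of $H$ (Remarks~\ref{remark:singularities-of-pairs}, \ref{remark:linear-systems}) makes $\mathcal{E}$ a nef bundle, whence $c_2\geqslant 0$ and $c_1\cdot Z\geqslant 0$ for every curve $Z$; applying Lemma~\ref{theorem:estimate-for-1-class} to the extremal nef classes $f$ and $C_0+nf$ and restricting $\mathcal{E}$ to $C_0$ then yields $0\leqslant\alpha\leqslant 6$ and $\alpha n\leqslant\beta\leqslant 3(n+2)$. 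Feeding these into \eqref{equation:Rieman-Roch-formula} together with $\chi(\mathcal{E})=\dim|H|+1\geqslant 33$ (from \eqref{equation:dimension-of-H} and Proposition~\ref{theorem:good-estimate}) turns the problem into a finite list of numerical types $(\alpha,\beta,c_2;n)$, the same role played by $0\leqslant c_1\leqslant 8$ in the $\mathbb{P}^2$ case.

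Next I dispose of the extremal type $c_1=-3K_V$ (that is $\alpha=6$, $\beta=3(n+2)$, which is admissible only for $n\in\{0,2\}$). There the computation of Lemma~\ref{theorem:estimate-for-1-class} gives $-K_W\cdot D_W\cdot g^{*}B=-3K_V\cdot B-c_1\cdot B=0$ for every nef $B$, so Lemma~\ref{theorem:when-anti-can-class-nef} makes $-K_W$ nef; then $W$ is a weak Fano threefold with terminal factorial singularities and $\rho(W)=\rho(\mathbb{F}_n)+1=3$. Since $c_1^2=9K_V^2=72$, formula \eqref{equation:formula-for-degree} gives $-K_W^3=192-8c_2$, and the bound $\dim|-K_W|\geqslant 35$ of Remark~\ref{remark:inequalities} forces $-K_W^3=72$ by integrality and Theorem~\ref{theorem:Prokhorov-degree}. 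But then (Remark~\ref{remark:K-trivial-contraction-1}) $W$ is a terminal $\mathbb{Q}$-factorial modification of $\mathbb{P}(3,1,1,1)$ or $\mathbb{P}(6,4,1,1)$, whose Picard number is $2$ or $5$ by Remarks~\ref{remark:unique-terminal-modification-1} and \ref{remark:unique-terminal-modification-2}, never $3$ --- a contradiction. The same degree-versus-$\rho$ mechanism handles any other type for which $-K_W$ turns out to be nef.

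For the remaining types I twist. Choosing $D\in\operatorname{Pic}(V)$ with $c_1-2D$ equal to $K_V$ (possible when $\alpha$ and $\beta+n$ are even, and off by one summand otherwise, exactly as the parity discussion of Lemma~\ref{theorem:even}), the rank-two identity $\mathcal{E}(-D)^{*}\simeq\mathcal{E}(-D)\otimes\mathcal{O}(-c_1+2D)$ gives, via Serre duality, $h^{2}(\mathcal{E}(-D))=h^{0}(\mathcal{E}(-D))$; Riemann--Roch then reads $\chi(\mathcal{E}(-D))=2-c_2(\mathcal{E}(-D))$, where $c_2(\mathcal{E}(-D))=c_2-(c_1^2-8)/4$. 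The lower bound $\chi(\mathcal{E})\geqslant 33$ bounds $c_2$ from above, and whenever $(c_1-K_V)^2<124$ this forces $c_2(\mathcal{E}(-D))<0$, hence $\chi(\mathcal{E}(-D))\geqslant 2$ and a nonzero section $s$. Since $c_2(\mathcal{E}(-D))<0$, the zero locus $Y$ of $s$ contains a curve; restricting the saturated sub-line-bundle $\mathcal{O}(k)$ cut out by $s$ to a general fibre $f$ ($f^2=0$) and to a general section $C_0+nf$ ($(C_0+nf)^2=n$), Lemma~\ref{theorem:restriction-on-movable-curve} forces $k=\#(Y\cap Z)=0$ in both cases. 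As every effective curve meets $f$ or $C_0+nf$ positively, $Y$ can contain no curve, a contradiction excluding these types.

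The main obstacle I expect is precisely the borderline types with large $c_1^2$ --- for instance $(\alpha,\beta;n)=(4,18;4)$, where $(c_1-K_V)^2=144>124$ and the centred twist only gives $c_2(\mathcal{E}(-D))\geqslant 0$ --- together with the odd-parity types, where the self-duality $h^2=h^0$ degrades to an inequality. In these residual cases one must replace the centred twist by an optimised off-centre one and bring in the refined part of Lemma~\ref{theorem:restriction-on-movable-curve}, namely $|d_1-d_2|=c_1\cdot C_0=\beta-\alpha n$ on the tautological section, to pin down the splitting type of $\mathcal{E}$ and exclude the type by a direct degree computation or the weak-Fano $\rho$-count of the second step. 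Verifying that these finitely many residual types are eliminated uniformly over $n\in\{0,2,3,4\}$ is where the real work lies; the rest is parallel to the proof of Proposition~\ref{theorem:P-2-case}.
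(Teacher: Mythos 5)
Your strategy is the paper's strategy almost verbatim: bound $c_{1}$ via Lemma~\ref{theorem:estimate-for-1-class} applied to $l$ and $h+nl$ together with nefness of $H$, twist $\mathcal{E}$ to normalize $c_{1}$, show the twisted bundle has $c_{2}'<0$ and $\chi>0$, extract a section, and kill its zero locus with Lemma~\ref{theorem:restriction-on-movable-curve}, falling back on the weak-Fano degree/Picard-number argument for the extremal types. But there is a genuine gap, and you name it yourself: the entire content of the proof is the verification, type by type, that $c_{2}'<0$ and $\chi(V,\mathcal{E}')>0$ for \emph{every} admissible $(a,b,c;n)$, and you do not carry it out. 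In the paper this occupies Lemmas~\ref{theorem:estimates-1}--\ref{theorem:estimates-4}, and the borderline types are not rare exceptions to be mopped up ``uniformly'': for $n=3$ the types $(a,b)=(5,15)$ and $(4,15)$, and for $n=4$ the types with $b=18$, $a\in\{3,4\}$, each require a separate argument in which one first shows $-K_{W}\cdot D_{W}\cdot g^{*}B=0$ for a suitable nef $B$, deduces via Lemma~\ref{theorem:when-anti-can-class-nef} that $W$ is a weak Fano threefold, computes $-K_{W}^{3}$ from \eqref{equation:formula-for-degree} for each admissible $c$, and then derives a contradiction either from $\dim|-K_{W}|\leqslant 34$ against Remark~\ref{remark:inequalities} or, when $-K_{W}^{3}=72$, from $\rho(W)=3$ against $\rho\in\{2,5\}$ for the terminal modifications of $\mathbb{P}(3,1,1,1)$ and $\mathbb{P}(6,4,1,1)$. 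One of these, $(a,b;n)=(4,17;4)$, is instead excluded by nefness of $-K_{V}+c_{1}$ (Lemma~\ref{theorem:property-of-1-class}), an ingredient absent from your sketch. A statement of the form ``$V\ne\mathbb{F}_{n}$'' whose proof defers exactly these cases is not yet a proof.

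A secondary point: your ``centred twist'' $c_{1}-2D=K_{V}$ is not the paper's normalization and creates avoidable parity trouble. The paper twists by $-ph-ql$ so that $c_{1}'=a'h+b'l$ with $-2\leqslant a',b'\leqslant -1$, which requires no parity hypothesis at all; the existence of a section then follows not from the self-duality $h^{2}=h^{0}$ (which indeed fails off the exact parity) but from Serre duality plus effectivity of $(\det\mathcal{E}'^{*}\otimes K_{V})^{*}=\mathcal{O}_{V}((a'+2)h+(b'+n+2)l)$ (Lemma~\ref{theorem:non-zero-section}), and the final contradiction is the clean chain $0\leqslant 2q_{2}-b'=c_{1}'\cdot Z=b'\leqslant -1$ for $Z\in|h+nl|$. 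With this normalization the odd-parity and ``off-centre'' complications you flag as residual difficulties simply do not arise, so I would recommend adopting it rather than optimizing the centred twist case by case.
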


\begin{proof}
Suppose that $V = \mathbb{F}_{n}$, $n \in \{0,2,3,4\}$.

We have $H^{4}(\mathbb{F}_{n}, \mathbb{Z}) \simeq \mathbb{Z}$ and
$H^{2}(\mathbb{F}_{n}, \mathbb{Z}) \simeq \mathbb{Z}\cdot h \oplus \mathbb{Z}\cdot l$ where $h$ is the negative section and $l$
is a fibre on $\mathbb{F}_{n}$.
Set $c_{1}: = ah + bl$, $c_{2}: = c$ where $a$, $b$, $c \in \mathbb{Z}$. Then by Lemma~\ref{theorem:estimate-for-1-class} we have
\begin{equation}
\label{equation:estimates-for-c-1}
\begin{array}{c}
a=c_{1} \cdot l \leqslant -3K_{V}\cdot l = 6, \qquad b = c_{1} \cdot (h+nl) \leqslant 3(2+n).
\end{array}
\end{equation}
Moreover, $a \geqslant 0$ and $b \geqslant na$ because $H$ is nef.

Let $p$, $q$ be the integers such that $a = 2p + a'$, $b = 2q + b'$ for some $a'$, $b' \in \mathbb{Z}$ with
$-2 \leqslant a', b' \leqslant -1$. Consider the twisted bundle $\mathcal{E}':=
\mathcal{E}\otimes\mathcal{O}_{\mathbb{F_{n}}}(-p-q)$. Set $c_{i}':=c_{i}(\mathcal{E}')$ to be the $i$-th Chern class of $\mathcal{E'}$,
$i=1$, $2$. We have
\begin{equation}
\label{equation:expression-for-c}
\begin{array}{c}
c'_{1} = a'h + b'l, \qquad c_{2}' = c+nap-aq-bp-np^{2}+2pq.
\end{array}
\end{equation}

Furthermore, from \eqref{equation:Rieman-Roch-formula} we obtain
\begin{equation}
\label{equation:RR-1}
\begin{array}{c}
\chi(V, \mathcal{E}) = -\frac{1}{2}na(a+1)+ab+a+b-c+2
\end{array}
\end{equation}

and
\begin{equation}
\label{equation:RR-2}
\begin{array}{c}
\chi(V, \mathcal{E}') = (b'-\frac{1}{2}na')(a'+1)+a'-c_{2}'+2.
\end{array}
\end{equation}

\begin{lemma}
\label{theorem:estimates-1}
In the above notation, if $n = 0$, then $c_{2}' < 0$ and $\chi(V, \mathcal{E}') > 0$.
\end{lemma}

\begin{proof}
By \eqref{equation:estimates-for-c-1} we have $0 \leqslant a, b \leqslant 6$. Furthermore, from
\eqref{equation:expression-for-c} and \eqref{equation:RR-1} we get
$$
c_{2}'=c-aq-bp+2pq = \frac{1}{2}ab+a+b-\chi(V, \mathcal{E})+2+\frac{1}{2}a'b'.
$$

By \eqref{equation:dimension-of-H} and Lemma~\ref{theorem:good-estimate} $\chi(V,\mathcal{E}) \in \{33,34,35,36\}$.
Then it is easy to see that $c_{2}' < 0$
when either $a$ or $b$ is less than $6$. Moreover, $c_{2}' < 0$ for all $0 \leqslant a, b \leqslant 6$  and $\chi(V, \mathcal{E}) \in \{35, 36\}$.

Suppose that $a=b=6$ and $\chi(V, \mathcal{E}) \in \{33, 34\}$. In particular, we have $c_{1} = 6h + 6l = -3K_{V}$. Then it follows from the proof of Lemma~\ref{theorem:estimate-for-1-class}
that
$$
-K_{W} \cdot D_{W} \cdot g^{*}B = 0
$$
for every curve $B$ on the surface $V$. From Lemma~\ref{theorem:when-anti-can-class-nef} we deduce that $-K_{W}$ is nef and hence $W$ is a weak Fano threefold.

On the other hand, from \eqref{equation:RR-1} we get
$$
\chi(V, \mathcal{E}) = ab+a+b-c+2
$$
Then for $a=b=6$ and $\chi(V, \mathcal{E}) \in \{33, 34\}$ we have $c \in \{16, 17\}$. From \eqref{equation:formula-for-degree} we get
$-K_{W}^{3} \in \{64, 56\}$ and hence
by the Riemann--Roch formula and Kawamata--Vieweg Vanishing Theorem
$$
\dim |-K_{W}| = -\frac{1}{2}K_{W}^{3} + 2 \leqslant 34.
$$
But according to Remark~\ref{remark:inequalities} $\dim |-K_{W}| \geqslant 35$, a contradiction. Thus, we get $c_{2}' < 0$.

Now, from \eqref{equation:RR-2} we get $$
\chi(V, \mathcal{E}') = b'(a'+1)+a'-c_{2}'+2.
$$
Since $-2 \leqslant a', b' \leqslant -1$ and $c_{2}' < 0$, we have $\chi(V, \mathcal{E}') > 0$.
\end{proof}

\begin{lemma}
\label{theorem:estimates-2}
In the above notation, if $n = 2$, then $c_{2}' < 0$ and $\chi(V, \mathcal{E}') > 0$.
\end{lemma}

\begin{proof}
By \eqref{equation:estimates-for-c-1} we have $2a \leqslant b \leqslant 12$ and $0 \leqslant a \leqslant 6$.
Furthermore, from \eqref{equation:expression-for-c} and \eqref{equation:RR-1} we get
$$
c_{2}'=c+2ap-aq-bp-2p^{2}+2pq = -\frac{1}{2}a^{2}+\frac{1}{2}ab+b-\chi(V, \mathcal{E})+
2-\frac{1}{2}a'^{2}+\frac{1}{2}a'b'.
$$

By \eqref{equation:dimension-of-H} and Lemma~\ref{theorem:good-estimate} $\chi(V,\mathcal{E}) \in \{33,34,35,36\}$.
Then for $2a \leqslant b \leqslant 12$ and $0 \leqslant a \leqslant 6$ we have

$$
c_{2}' \leqslant -\frac{1}{2}a^{2} + 6a + 14 - \chi(V, \mathcal{E}) - \frac{1}{2}a'^{2} - a' \leqslant
32 - \chi(V, \mathcal{E}) - \frac{1}{2}a'^{2} - a' \leqslant -1  - \frac{1}{2}a'^{2} - a' < 0.
$$

Finally, from \eqref{equation:RR-2} we get
$$
\chi(V, \mathcal{E}') = (b'-a')(a'+1)+a'-c_{2}'+2.
$$
Then $\chi(V, \mathcal{E}') \leqslant 0$ only for $a' = -2$ and
$b' = -1$ because $-2 \leqslant a', b' \leqslant -1$ and $c_{2}' < 0$.

But if $a' = -2$ and $b' = -1$, then $\chi(V, \mathcal{E}') \leqslant 0$ only for $c'_{2} = -1$. Moreover, for $b' = -1$ we
have $b \leqslant 11$. Thus, we get
$$
-1 = c_{2}' \leqslant -\frac{1}{2}a^{2} + \frac{11}{2}a + 12 - \chi(V, \mathcal{E}) < -1,
$$
a contradiction. Hence $\chi(V, \mathcal{E}') > 0$.
\end{proof}

\begin{lemma}
\label{theorem:estimates-3}
In the above notation, if $n = 3$, then $c_{2}' < 0$ and $\chi(V, \mathcal{E}') > 0$.
\end{lemma}

\begin{proof}
By \eqref{equation:estimates-for-c-1} we have $3a \leqslant b \leqslant 15$ and hence $0 \leqslant a \leqslant 5$.
Furthermore, from \eqref{equation:expression-for-c} and \eqref{equation:RR-1} we get
$$
c_{2}'=c+3ap-aq-bp-3p^{2}+2pq = -\frac{3}{4}a^{2}+\frac{1}{2}ab-\frac{1}{2}a+b-\chi(V, \mathcal{E})+
2-\frac{3}{4}a'^{2}+\frac{1}{2}a'b'.
$$

By \eqref{equation:dimension-of-H} and Lemma~\ref{theorem:good-estimate} $\chi(V,\mathcal{E}) \in \{33,34,35,36\}$.
If $\chi(V,\mathcal{E}) \in \{34,35,36\}$, then for $3a \leqslant b \leqslant 15$ and $0 \leqslant a \leqslant 5$ we have
$$
c_{2}' \leqslant -\frac{3}{4}a^{2} + 7a + 17 - \chi(V, \mathcal{E})-\frac{3}{4}a'^{2} + \frac{1}{2}a'b' <
33 - \chi(V, \mathcal{E}) - \frac{1}{2}b' \leqslant 0.
$$

If $\chi(V,\mathcal{E}) = 33$, then for $3a \leqslant b \leqslant 14$ and $0 \leqslant a \leqslant 5$ we have
$$
c_{2}' \leqslant -\frac{3}{4}a^{2} + \frac{13}{2}a - 17 - \frac{3}{4}a'^{2}+\frac{1}{2}a'b' \leqslant -\frac{3}{4}a^{2} + \frac{13}{2}a - 15  -\frac{3}{4}a'^{2} < 0.
$$

Furthermore, for $\chi(V,\mathcal{E}) = 33$, $b = 15$ and $0 \leqslant a \leqslant 4$ we have
$$
c_{2}' = -\frac{3}{4}a^{2} + 7a - 16 - \frac{3}{4}a'^{2} - \frac{1}{2}a' < 0.
$$

Suppose that $\chi(V,\mathcal{E}) = 33$, $a = 5$ and $b = 15$ (note that $c_{2}' = 0$ in this case).
Then from \eqref{equation:estimates-for-c-1} and the proof of Lemma~\ref{theorem:estimate-for-1-class} we
deduce that
$$
-K_{W} \cdot D_{W} \cdot g^{*}B = 0
$$
where $B \sim h + 3l$ on $V$. Lemma~\ref{theorem:when-anti-can-class-nef} implies that $-K_{W}$ is nef and hence $W$ is a weak Fano threefold.

On the other hand, from \eqref{equation:RR-1} we get $$
\chi(V, \mathcal{E}) = -\frac{3}{2}a(a+1) + ab+ a + b - c + 2.
$$
Then for $\chi(V,\mathcal{E}) = 33$, $a = 5$ and $b = 15$ we have $c = 19$. From \eqref{equation:formula-for-degree} we get $-K_{W}^{3} = 46$ and hence
by the Riemann--Roch formula and Kawamata--Vieweg Vanishing Theorem
$$
\dim |-K_{W}| = -\frac{1}{2}K_{W}^{3} + 2 = 25.
$$
But according to Remark~\ref{remark:inequalities} $\dim |-K_{W}| \geqslant 35$, a contradiction. Thus, we get $c_{2}' < 0$.

Now, from \eqref{equation:RR-2} we get
$$
\chi(V, \mathcal{E}') = (b'-\frac{3}{2}a')(a'+1)+a'-c_{2}'+2.
$$
Then $\chi(V, \mathcal{E}') \leqslant 0$ only for $a'=-2$ because  $-2 \leqslant a',b' \leqslant -1$ and $c_{2}' < 0$.

If $a' = -2$ and $b' = -1$, then $\chi(V, \mathcal{E}') = -2 - c_{2}'$ and
$$
c_{2}' = -\frac{3}{4}a^{2}+\frac{1}{2}ab-\frac{1}{2}a+b-\chi(V, \mathcal{E}).
$$
Since $a' = -2$, we get $0 \leqslant a \leqslant 4$. Then for $3a \leqslant b \leqslant 13$ we have
$$
c_{2}' \leqslant -\frac{3}{4}a^{2} + 6a + 13 - \chi(V, \mathcal{E}) \leqslant  -\frac{3}{4}a^{2} + 6a - 20 < -2.
$$

Furthermore, for $b > 13$ we get $b = 15$ because $b' = -1$. Then for $0 \leqslant a \leqslant 3$ we have

$$
c_{2}' \leqslant -\frac{3}{4}a^{2} + 7a + 15 - \chi(V, \mathcal{E}) \leqslant -\frac{3}{4}a^{2} + 7a - 18 < -2.
$$

Suppose that $a = 4$ and $b = 15$ (note that $c_{2}' = -2$ and hence $\chi(V, \mathcal{E}') = 0$ in this case).
Then from \eqref{equation:estimates-for-c-1} and the proof of Lemma~\ref{theorem:estimate-for-1-class} we
deduce that
$$
-K_{W} \cdot D_{W} \cdot g^{*}B = 0
$$
where $B \sim h + 3l$ on $V$. Lemma~\ref{theorem:when-anti-can-class-nef} implies that $-K_{W}$ is nef and hence $W$ is a weak Fano threefold.
On the other hand, for $a = 4$ and $b = 15$ from \eqref{equation:RR-1} we get $c = 51 - \chi(V, \mathcal{E})$ and hence $c \in \{15,16,17,18\}$.
Then by \eqref{equation:formula-for-degree} we have $-K_{W}^{3} \in \{48, 56, 64, 72\}$.

If $-K_{W}^{3} = 72$, then from Theorem~\ref{theorem:Prokhorov-degree},
Remarks~\ref{remark:K-trivial-contraction-1}, \ref{remark:unique-terminal-modification-1} and \ref{remark:unique-terminal-modification-2} we deduce that the threefold $W$ is isomorphic to that constructed either in Example~\ref{example:examp-1} or in  Example~\ref{example:examp-2}.
In particular, we have  either
$\rho(W) = 2$ or $5$ which is impossible because $\rho(W)  = 3$.

Furthermore, for $-K_{W}^{3} \in \{48, 56, 64\}$ by the Riemann--Roch formula and Kawamata--Vieweg Vanishing Theorem we have
$$ \dim |-K_{W}| = -\frac{1}{2}K_{W}^{3} + 2 \leqslant 34.
$$
But according to Remark~\ref{remark:inequalities} $\dim |-K_{W}| \geqslant 35$, a contradiction.

Finally, if $a' = -2$ and $b' = -2$, then $\chi(V, \mathcal{E}') = -1 - c_{2}'$ and
$$
c_{2}' = -\frac{3}{4}a^{2}+\frac{1}{2}ab-\frac{1}{2}a+b-\chi(V, \mathcal{E})+1.
$$
We have $a \leqslant 4$ and                         $3a \leqslant b \leqslant 14$. Then
$$
c_{2}' \leqslant -\frac{3}{4}a^{2} + \frac{13}{2}a - 18 < -1.
$$

Thus, we get $\chi(V, \mathcal{E}') > 0$.
\end{proof}

\begin{lemma}
\label{theorem:estimates-4}
In the above notation, if $n = 4$, then $c_{2}' < 0$ and $\chi(V, \mathcal{E}') > 0$.
\end{lemma}

\begin{proof}
By \eqref{equation:estimates-for-c-1} we have $4a \leqslant b \leqslant 18$ and hence $0 \leqslant a \leqslant 4$.
Furthermore, from \eqref{equation:expression-for-c} and \eqref{equation:RR-1} we get
$$
c_{2}'=c+4ap-aq-bp-4p^{2}+2pq = -a^{2}+\frac{1}{2}ab-a+b-\chi(V, \mathcal{E})+
2-a'^{2}+\frac{1}{2}a'b'.
$$

By \eqref{equation:dimension-of-H} and Lemma~\ref{theorem:good-estimate} $\chi(V,\mathcal{E}) \in \{33,34,35,36\}$.
If $b \leqslant 17$, then for $0 \leqslant a \leqslant 4$ we have
$$
c_{2}' \leqslant -a^{2} + \frac{15}{2}a - 14 - a'^{2} + \frac{1}{2}a'b' \leqslant
-a^{2} + \frac{15}{2}a - 14 - a'^{2} - a' < 0.
$$

Suppose that $b = 18$ and $\chi(V,\mathcal{E}) \in \{33,34,35\}$. Then $c_{2}' \geqslant 0$ only for $a \in \{3, 4\}$.
Furthermore, from \eqref{equation:estimates-for-c-1} and the proof of Lemma~\ref{theorem:estimate-for-1-class} we
deduce that
$$
-K_{W} \cdot D_{W} \cdot g^{*}B = 0
$$
where $B \sim h + 4l$ on $V$. Lemma~\ref{theorem:when-anti-can-class-nef} implies that $-K_{W}$ is nef and hence $W$ is a weak Fano
threefold.

On the other hand, for $a \in \{3, 4\}$ and $b = 18$ from \eqref{equation:RR-1} we get $$
c = a(17 - 2a) + 20 - \chi(V,\mathcal{E})
$$
and hence $17 \leqslant c \leqslant 23$. Since for $a \in \{3, 4\}$ and $b = 18$ we have $c_{1}^{2} \in \{72, 80\}$,
from \eqref{equation:formula-for-degree} we get
$$
-K_{W}^{3} \in \{192 - 8c, 208 - 8c\}.
$$

If $c = 17$ and $c_{1}^{2} = 80$, then $-K_{W}^{3} = 72$ and from Theorem~\ref{theorem:Prokhorov-degree},
Remarks~\ref{remark:K-trivial-contraction-1}, \ref{remark:unique-terminal-modification-1} and \ref{remark:unique-terminal-modification-2}
we deduce that the threefold $W$ is isomorphic to that constructed either in Example~\ref{example:examp-1} or in
Example~\ref{example:examp-2}. In particular, we have  either $\rho(W) = 2$ or $5$
which is impossible because $\rho(W)  = 3$.

Furthermore, for $18 \leqslant c \leqslant 23$ we have
$$
-K_{W}^{3} \leqslant 208 - 8c \leqslant 64 $$
and by the Riemann--Roch formula and Kawamata--Vieweg Vanishing Theorem $$
\dim |-K_{W}| = -\frac{1}{2}K_{W}^{3} + 2 \leqslant 34.
$$
But according to Remark~\ref{remark:inequalities} $\dim |-K_{W}| \geqslant 35$, a contradiction.

Suppose that $b = 18$ and $\chi(V,\mathcal{E}) = 36$.  Then again $c_{2}' \geqslant 0$ only for $a \in \{3,4\}$. By the previous arguments $W$ is a weak Fano
threefold such that $-K_{W}^{3} \leqslant 56$ and hence $\dim |-K_{W}|  \leqslant 30$ which contradicts the estimate in Remark~\ref{remark:inequalities}.   Thus, we get $c_{2}' < 0$.

Now, from \eqref{equation:RR-2} we obtain
$$
\chi(V, \mathcal{E}') = (b'-2a')(a'+1)+a'-c_{2}'+2.
$$
Then $\chi(V, \mathcal{E}') \leqslant 0$ only for $a' = -2$ because $-2 \leqslant a', b' \leqslant -1$ and $c_{2}' < 0$.

If $a' = -2$ and $b' = -1$, then $\chi(V, \mathcal{E}') = -3 - c_{2}'$ and
$$
c_{2}' = -a^{2}+\frac{1}{2}ab-a+b-\chi(V, \mathcal{E})-1.
$$
Since  $b' = -1$, we have $4a \leqslant b \leqslant 17$. If $0 \leqslant a \leqslant 3$, then we have
$$
c_{2}' \leqslant -a^{2} + \frac{15}{2}a - 17 < -3.
$$

If $a = 4$ and $4a \leqslant b \leqslant 16$, then we have
$$
c_{2}' \leqslant -54 + 3b < -3.
$$

Suppose that $a = 4$ and $b = 17$ (note that $c_{2}' = -3$ and hence $\chi(V, \mathcal{E}') = 0$ in this case).
Then we have $-K_{V} + c_{1} = 6h + 23l$ which is not nef on $V$. This contradicts Lemma~\ref{theorem:property-of-1-class}.

Finally, if $a' = -2$ and $b' = -2$, then $\chi(V, \mathcal{E}') = -2 - c_{2}'$ and
$$
c_{2}' = -a^{2}+\frac{1}{2}ab-a+b-\chi(V, \mathcal{E}).
$$

If $4a \leqslant b \leqslant 16$, then for $0 \leqslant a \leqslant 4$ we have
$$
c_{2}' \leqslant -a^{2} + 7a - 17 < -2.
$$

Furthermore, for $b > 16$ we get $b = 18$ because $b' = -2$. Then for $0 \leqslant a \leqslant 3$ we have
$0 \leqslant a \leqslant 2$, since $a' = -2$, and
$$
c_{2}' \leqslant -a^{2} + 8a - 15 < -2.
$$

Suppose that $b = 18$ and $a = 4$. Then, as in the proof of the
estimate $c_{2}'<0$, we get that $W$ is a weak Fano threefold such
that either $-K_{W}^{3} = 72$ or $-K_{W}^{3} \leqslant 64$. As we
have seen this is impossible. Thus, we get $\chi(V, \mathcal{E}')
> 0$.
\end{proof}

\begin{lemma}[{\cite[Claim 10.18]{Prokhorov-degree}}]
\label{theorem:non-zero-section}
In the above notation, we have $H^{0}(V, \mathcal{E}') \ne 0$.
\end{lemma}

\begin{proof}
Assume that $H^{0}(V, \mathcal{E}') = 0$. Then by Lemmas~\ref{theorem:estimates-1}--\ref{theorem:estimates-4} we have
$H^{2}(V, \mathcal{E}') \ne 0$. Furthermore, by Serre Duality
$$
H^{2}(V, \mathcal{E}')^{*} \simeq H^{0}(V, \mathcal{E}'^{*} \otimes K_{V})
\simeq H^{0}(V, \mathcal{E}' \otimes \det \mathcal{E}'^{*}\otimes K_{V}).
$$
But $(\det \mathcal{E}'^{*}\otimes K_{V})^{*} = \mathcal{O}_{V}((a'+2)h + (b'+n+2)l)$ and hence
$H^{0}(V, (\det \mathcal{E}'^{*}\otimes K_{V})^{*}) \ne 0$, a contradiction.
\end{proof}

Let $s \in H^{0}(V, \mathcal{E}')$ be a nonzero section. If $s$ does not vanish anywhere, then $\mathcal{E}'$ is an extension
of some line bundle $\mathcal{E}_{1}$ by $\mathcal{O}$. But then $c_{2}' = 0$ which is impossible by
Lemmas~\ref{theorem:estimates-1}--\ref{theorem:estimates-4}.

Let $\emptyset \ne Y \subset V$ be the zero locus of $s$. Since $c_{2}' < 0$, we have $\dim Y = 1$. Let
$Y \sim q_{1}h + q_{2}l$. Then the restriction of $\mathcal{E}'$ on general curve $Z \in |h+nl|$ is
$$
\mathcal{E}'|_{Z}=\mathcal{O}_{\mathbb{P}^{1}}(q_{2})\oplus \mathcal{O}_{\mathbb{P}^{1}}(b'-q_{2}).
$$
By Lemma~\ref{theorem:restriction-on-movable-curve} applied to $\mathcal{E}'$
we have
$$
0 \leqslant 2q_{2}-b' = c_{1}' \cdot Z = b' \leqslant -1,
$$
a contradiction. Proposition~\ref{theorem:F-n-case} is completely proved.
\end{proof}

From Propositions~\ref{theorem:P-2-case} and
\ref{theorem:F-n-case} we derive a contradiction in case when $V$
is a surface which completes the proof of
Theorem~\ref{theorem:main}.

\end{document}